\documentclass{amsart}
\usepackage{amssymb}
\usepackage{hyperref}
\usepackage{url}
\usepackage[all]{xy}
\usepackage{tikz}
\usepackage[none]{hyphenat}
\usetikzlibrary{matrix,arrows,decorations.pathmorphing}

\newtheorem{thm}{Theorem}
\newtheorem{introthm}{Theorem}

\newtheorem{lem}[thm]{Lemma}
\newtheorem{cor}[thm]{Corollary}

\newtheorem{prop}[thm]{Proposition}

\theoremstyle{definition}
\newtheorem{defn}[thm]{Definition}

\newtheorem{rem}[thm]{Remark}

\newtheorem{exa}[thm]{Example}

\newcommand{\RN}[1]{%
  \textup{\uppercase\expandafter{\romannumeral#1}}%
}

\numberwithin{thm}{section}

\newfont{\cyrr}{wncyr10}

\def\Z{\mathbf{Z}}

\def\F{\mathbf{F}}
\def\R{\mathbf{R}}
\def\C{\mathbf{C}}

\def\Ftwo{\F_2}

\def\bA{\mathbf{A}}

\def\Xset{\mathcal{C}}

\def\Xset{\mathcal{C}}

\def\O{\mathcal{O}}

\def\cP{\mathcal{P}}

\def\cP{\mathcal{P}}

\def\l{\mathfrak{q}}

\def\p{\mathfrak{p}}
\def\q{\mathfrak{q}}

\def\d{\mathfrak{d}}

\def\Hom{\mathrm{Hom}}
\def\Gal{\mathrm{Gal}}

\def\im{\mathrm{Im}}

\def\Sel{\mathrm{Sel}}

\def\Frob{\mathrm{Frob}}

\def\ur{\mathrm{ur}}

\def\res{\mathrm{res}}

\def\dimtwo{\dim_{\Ftwo}}

\def\ram{\mathrm{ram}}

\def\Pic{\mathrm{Pic}}
\def\iK{\bA_K^\times}

\def\N{\mathbf{N}}

\def\too{\longrightarrow}

\def\dirsum#1{\underset{#1}{\textstyle\bigoplus}}

\def\d2{\dim_{\Ftwo}}

\title[On 2-Selmer ranks of quadratic twists of elliptic curves]
   {On 2-Selmer ranks of quadratic twists of \\
   elliptic curves}
\author{Myungjun Yu}
\address{Department of Mathematics,
UC Irvine,
Irvine, CA 92697,
USA}
\email{\href{mailto:myungjuy@math.uci.edu}{myungjuy@math.uci.edu}}

\begin{document}

\begin{abstract}
We study the $2$-Selmer ranks of elliptic curves. We prove that for an arbitrary elliptic curve $E$ over an arbitrary number field $K$, if the set $A_E$ of 2-Selmer ranks of quadratic twists of $E$ contains an integer $c$, it contains all integers larger than $c$ and having the same parity as $c$. We also find sufficient conditions on $A_E$ such that $A_E$ is equal to $\Z_{\ge t_E}$ for some number $t_E$. When all points in $E[2]$ are rational, we give an upper bound for $t_E$. 
\end{abstract}

\maketitle
\sloppy

\section*{Introduction}
Let $E$ be an elliptic curve over a number field $K$ and let $\Sel_2(E)$ denote its $2$-Selmer group (Definition \ref{selmer}). 
Let $E^\chi$ be the quadratic twist of $E$ by a quadratic character $\chi: G_K \to \{\pm1\}$. Let
$$
r_2(E^\chi):= \dim_{\Ftwo}(\Sel_2(E^\chi)).
$$
For $E$, one may study the set
$$
A_E:= \{r_2(E^\chi): E^\chi \text{ is a quadratic twist of $E$}\}, 
$$
i.e., the set of (non-negative) integers $r$ that appear as $2$-Selmer ranks of some quadratic twists of $E$. Which integers are contained in $A_E$? and how many?

There are many interesting results in this direction. For example, Dokchitser and Dokchitser \cite{dd} showed that the elements in $A_E$ have constant parity if and only if $K$ has no real embedding and $E$ acquires everywhere good reduction over an abelian extension of $K$ (which is called ``constant $2$-Selmer parity condition" from now on). Mazur-Rubin \cite{MR} and Klagsbrun-Mazur-Rubin \cite{KMR} proved that if $E$ does not satisfy the constant $2$-Selmer parity condition and $\Gal(K(E[2])/K) \cong S_3$, then $A_E = \Z_{\ge 0}$.

When $\Gal(K(E[2])/K) \cong S_3$ or $\Z/3 \Z$, it is known that there are only three possible cases for $A_E$. The following theorem, which exhibits those possible cases, follows from inductively applying \cite[Proposition 5.2]{MR} and \cite[Proposition 6.6]{mj}.

\begin{introthm}[Mazur-Rubin, Yu]
Suppose that $\Gal(K(E[2])/K) \cong S_3$ or $\Z/3\Z$ (equivalently, $E(K)[2] = 0$). Then $A_E = \Z_{\ge 0}$, or $A_E = \{a \ge 0 : a\equiv 0 \text{ (mod $2$)} \}$, or $A_E = \{a \ge 0 : a\equiv 1 \text{ (mod $2$)}\}$.
\end{introthm}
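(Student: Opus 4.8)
The plan is to combine two inputs: the ``twisting'' propositions \cite[Proposition 5.2]{MR} (for Galois group $S_3$) and \cite[Proposition 6.6]{mj} (for Galois group $\Z/3\Z$), which control how the $2$-Selmer rank changes when the twisting character is modified at one prime, and the parity dichotomy of Dokchitser--Dokchitser. We use throughout that the hypothesis $E(K)[2]=0$ is exactly $\Gal(K(E[2])/K)\cong S_3$ or $\Z/3\Z$, and that $A_E\ne\emptyset$ since $r_2(E)=r_2(E^\one)\in A_E$.

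\emph{Moving the rank.} From the two propositions I would extract, by induction on the rank, the following: for every quadratic character $\chi$ there is a prime $\q$ of $K$ (produced by a Chebotarev argument) with an associated quadratic character $\chi_\q$ ramified only at $\q$ such that $r_2(E^{\chi\chi_\q})=r_2(E^\chi)+2$; there is such a $\q$ with $r_2(E^{\chi\chi_\q})=r_2(E^\chi)-2$ whenever $r_2(E^\chi)\ge 2$; and, if the constant $2$-Selmer parity condition fails, there are such $\q$ with $r_2(E^{\chi\chi_\q})=r_2(E^\chi)\pm 1$. Iterating the ``$-2$'' step shows that if $A_E$ contains an even integer then $0\in A_E$, and if $A_E$ contains an odd integer then $1\in A_E$; iterating the ``$+2$'' step shows that if $0\in A_E$ then every non-negative even integer lies in $A_E$, and if $1\in A_E$ then every positive odd integer lies in $A_E$. (This upward direction, for arbitrary $E$, is also the content of the main theorem of the present paper.)

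\emph{Conclusion.} Since $A_E=(A_E\cap 2\Z)\sqcup\bigl(A_E\cap(2\Z+1)\bigr)$ and, by the previous paragraph, each summand is either empty or the full set of non-negative integers in its residue class, $A_E$ is a union of full residue classes; being non-empty, it must be $\{a\ge 0:a\equiv 0\pmod 2\}$, or $\{a\ge 0:a\equiv 1\pmod 2\}$, or their union $\Z_{\ge 0}$. The last case occurs exactly when $A_E$ contains integers of both parities, which by the Dokchitser--Dokchitser result is precisely when the constant $2$-Selmer parity condition fails; in the $S_3$ subcase of that situation this recovers the theorem of Mazur--Rubin and Klagsbrun--Mazur--Rubin quoted above.

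\emph{Where the work is.} The combinatorics above is routine once the twisting propositions are granted, so the substance lies entirely in them, and the step I expect to be hardest to reprove from scratch is the $\Z/3\Z$ case \cite[Proposition 6.6]{mj}. There $E[2]$ is an irreducible but not absolutely irreducible $\Ftwo[\Gal(K(E[2])/K)]$-module, so Mazur--Rubin's $S_3$ arguments do not transfer verbatim: one must redo the local comparison of the Selmer conditions $H^1_\f$ for $E$ and $E^{\chi_\q}$ at $\q$, at the primes of bad reduction, and at the primes above $2$; the global Poitou--Tate bookkeeping forcing the rank change into $\{-1,0,+1\}$; and the Chebotarev argument producing primes with the prescribed Frobenius class. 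It is here that the restriction $E(K)[2]=0$ — as opposed to, say, $E(K)[2]\cong\Z/2\Z$, where a disparity obstruction can intervene — is genuinely used, which is also why exactly these three sets, and no others, arise.
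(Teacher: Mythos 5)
Your proposal is correct and follows essentially the paper's own route: the paper deduces this theorem precisely by applying \cite[Proposition 5.2]{MR} and \cite[Proposition 6.6]{mj} inductively to the twists $E^\chi$, i.e.\ by the same $\pm 2$ saturation argument within each parity class that you give, with all the real content residing in those two cited propositions. The Dokchitser--Dokchitser dichotomy and the $\pm 1$ moves you invoke are not needed for (and are not part of) this deduction; they only serve to identify which of the three cases actually occurs.
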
  
However, in the other cases, the behaviour of $A_E$ was less understood, so we are mainly interested in the case when $\Gal(K(E[2])/K)$ has order $1$ or $2$. Let $t_E$ denote the smallest integer in $A_E$. In this paper, we derive a result on $A_E$ by proving the following theorem (Theorem \ref{inc2}).
\begin{introthm}
\label{density1}
Let $E$ be an elliptic curve over a number field $K$. Then there exist infinitely many quadratic characters $\chi$ such that $r_2(E^\chi) = r_2(E) +2$.
\end{introthm}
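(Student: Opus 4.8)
The plan is to compare the $2$-Selmer groups of $E$ and of a quadratic twist $E^\chi$ of prime conductor, and to arrange, via a Chebotarev argument and the theory of \metabolic local conditions (as in \cite{MR,KMR}), that the twist raises the $2$-Selmer rank by exactly $2$. First I would fix the setup: let $S$ be the set of places of $K$ dividing $2\infty$ together with the places of bad reduction of $E$. Since a quadratic character is trivial modulo $2$, the identification $E^\chi[2]\cong E[2]$ is $G_K$-equivariant, so $\Sel_2(E)$ and $\Sel_2(E^\chi)$ are both subgroups of $H^1(K,E[2])$ cut out by families of local conditions; if $\chi$ is ramified only at one prime $\q\notin S$ and split at every place of $S$, those two families coincide at all $v\neq\q$, while at $\q$ the condition for $\Sel_2(E)$ is $\Hu(K_\q,E[2])=\im(E(K_\q)/2E(K_\q))$ and the condition for $\Sel_2(E^\chi)$ is $L'_\q:=\im(E^\chi(K_\q)/2E^\chi(K_\q))$. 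Requiring further that $\q$ split completely in $K(E[2])$, the space $H^1(K_\q,E[2])$ is $4$-dimensional over $\Ftwo$, carries a nondegenerate \metabolic pairing (cup product followed by the Weil pairing), and $\Hu(K_\q,E[2])$ and $L'_\q$ are both Lagrangian of dimension $2$.

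Next I would run the dimension count. Let $\Sel_2^{\q}(E)$ and $\Sel_{2,\q}(E)$ be the $\q$-relaxed and $\q$-strict Selmer groups, and note $\Sel_2^{\q}(E)=\Sel_2^{\q}(E^\chi)$ and $\Sel_{2,\q}(E)=\Sel_{2,\q}(E^\chi)$. Since the Selmer structure of $E$ is self-dual under the Weil pairing, the Greenberg--Wiles formula shows that the image $W_\q$ of $\Sel_2^{\q}(E)$ in $H^1(K_\q,E[2])$ is Lagrangian of dimension $2$ and that $W_\q\cap\Hu(K_\q,E[2])=\loc_\q(\Sel_2(E))$. Splicing $0\to\Sel_{2,\q}(E)\to\Sel_2(E)\xrightarrow{\loc_\q}\Hu(K_\q,E[2])$ with its analogue for $E^\chi$ and with $0\to\Sel_{2,\q}(E)\to\Sel_2^{\q}(E)\xrightarrow{\loc_\q}H^1(K_\q,E[2])$ gives
\[
\dimtwo\Sel_2(E^\chi)-\dimtwo\Sel_2(E)=\dimtwo(W_\q\cap L'_\q)-\dimtwo\bigl(W_\q\cap\Hu(K_\q,E[2])\bigr).
\]
So it suffices to produce infinitely many primes $\q$, each carrying a quadratic character $\chi_\q$ ramified only at $\q$ and split on $S$, with $\loc_\q(\Sel_2(E))=0$ and $L'_\q=W_\q$: the right-hand side is then $\dimtwo W_\q=2$.

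The crux is a Chebotarev construction of such $\q$. Fixing a basis of $\Sel_2(E)$, let $N/K$ be a finite Galois extension containing $K(E[4])$, the finite extensions of $K(E[2])$ cut out by the basis classes, a finite extension that determines which Lagrangian complement of $\Hu(K_\q,E[2])$ the subspace $W_\q$ is, and the maximal elementary abelian $2$-extension inside a ray class field of $K$ of modulus supported on $S$ (with large exponents above $2$). A positive density of primes $\q$ have Frobenius in the appropriate conjugacy class of $\Gal(N/K)$; for such $\q$ one has simultaneously that $\q\notin S$ with good reduction and $\q\nmid 2$, that $\q$ splits completely in $K(E[2])$ and $K(E[4])$, that $\loc_\q(\Sel_2(E))=0$ (every basis class is unramified at $\q$ with vanishing value on Frobenius), that $W_\q$ is a prescribed Lagrangian complement of $\Hu(K_\q,E[2])$, and that the ideal $\q$ is trivial in the relevant ray class group modulo $2$, so a quadratic character $\chi_\q$ ramified only at $\q$ and split on $S$ exists. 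Using $E[4]\subset E(K_\q)$ and Kummer theory over the ramified quadratic extension of $K_\q$ attached to $\chi_\q$, one checks that $L'_\q$ is transverse to $\Hu(K_\q,E[2])$ and, for one of the two ramified characters at $\q$, equals the prescribed $W_\q$. Distinct primes give distinct characters, so this produces infinitely many $\chi$ with $r_2(E^\chi)=r_2(E)+2$.

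I expect the main obstacle to be this last step: controlling the local condition $L'_\q$ of the twist accurately enough — at a prime split in $K(E[4])$ — to know that it is transverse to the unramified subgroup and to identify which Lagrangian complement it is, and then matching it with $W_\q$ through a Chebotarev condition. This couples an explicit local $2$-descent computation with the linear algebra of Lagrangian subspaces of a $4$-dimensional \metabolic $\Ftwo$-space, together with the verification that all the imposed Frobenius conditions (in particular the ray-class condition needed to build $\chi_\q$, possibly only after restricting to a positive-density set of $\q$) are mutually compatible.
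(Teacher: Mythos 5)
There is a genuine gap at the crux of your argument, namely the claim that you can prescribe the Lagrangian $W_\q = \loc_\q\bigl(\Sel_2^{\q}(E)\bigr)$ by a Chebotarev condition coming from ``a finite extension that determines which Lagrangian complement of $\Hu(K_\q,E[2])$ the subspace $W_\q$ is.'' No such extension, chosen before $\q$, can exist: the $\q$-relaxed Selmer group $\Sel_2^{\q}(E)$ depends on $\q$, and the two new dimensions of classes it acquires (beyond $\Sel_{2,\q}(E)$) are global cohomology classes that may be ramified at $\q$; their localizations at $\q$ — which are exactly what span $W_\q$ once $\loc_\q(\Sel_2(E))=0$ — are not functions of $\Frob_\q$ in any field fixed in advance. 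Global duality only tells you that $W_\q$ is a $2$-dimensional isotropic subspace transverse to $\Hu(K_\q,E[2])$; it does not single out which of the several transverse Lagrangians it is, while the twisted local condition $L'_\q$ can only be one of the (at most two) Lagrangians arising from the two ramified quadratic characters of $K_\q^\times$. So the requirement $L'_\q=W_\q$, which your dimension count genuinely needs (both spaces are $2$-dimensional, so you need full intersection), is exactly the hard point, and the proposal offers no mechanism to achieve it.

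The paper avoids this entirely by weakening what must be shown: with $\chi$ split on $\Sigma$, unramified outside $\Sigma\cup\{\q\}$, ramified at $\q$, and $\q$ split completely in a field containing $K(E[4])$ and killing $\Sel_2(E)$, the parity formula (Theorem \ref{parity}, with $h_\q(\chi_\q)=2$) together with the Poitou--Tate bound (Corollary \ref{babo}) already forces $r_2(E^\chi)-r_2(E)\in\{0,2\}$; in your notation, $\dimtwo(W_\q\cap L'_\q)$ is $0$ or $2$, so it suffices to exhibit a single nonzero element of $\loc_\q(\Sel_2(E^\chi))$. When $E(K)[2]\neq 0$ this element is explicit: the Kummer class of a rational $2$-torsion point $P$ lies in $\Sel_2(E^\chi)$, and since $E[4]\subset E(K_\q)$ while the twist satisfies $E^\chi(K_\q)[4]=E^\chi(K_\q)[2]$, Lemma \ref{4torsion} shows $P$ has nonzero local Kummer image at $\q$. (When $E(K)[2]=0$ the paper simply invokes the known $S_3$/$A_3$ results.) Your framework — the identity $\dimtwo\Sel_2(E^\chi)-\dimtwo\Sel_2(E)=\dimtwo(W_\q\cap L'_\q)-\dimtwo(W_\q\cap\Hu(K_\q,E[2]))$, the choice of $\q$ with $\loc_\q(\Sel_2(E))=0$, and the ray class condition producing $\chi$ — is sound and matches the paper's; to repair the proof, replace the attempt to prescribe $W_\q$ by the parity argument plus an explicit Selmer class of the twist with nonzero localization at $\q$ (which, for general $E$, is where the case division on $\Gal(K(E[2])/K)$ enters).
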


By applying Theorem \ref{density1} inductively, we can see

\begin{introthm}
\label{density2}
Let $E$ be an elliptic curve over a number field $K$. Then $A_E \supset \{ r\equiv t_E \text{ (mod $2$)}: r \ge t_E\},$ (with equality if $E$ satisfies the constant $2$-Selmer parity condition). 
\end{introthm}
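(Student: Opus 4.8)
The plan is to bootstrap from Theorem~\ref{density1} by a short induction, using that a quadratic twist of a quadratic twist of $E$ is again a quadratic twist of $E$. Recall that for quadratic characters $\psi,\chi\colon G_K\to\{\pm1\}$ one has $(E^\psi)^\chi = E^{\psi\chi}$, and $\psi\chi$ is again a quadratic character (its values lie in $\{\pm1\}$), allowing the trivial character as a degenerate case; moreover $E^\psi$ is an elliptic curve over $K$, so Theorem~\ref{density1} applies to it and produces a quadratic twist of $E$ itself whose $2$-Selmer rank is $r_2(E^\psi)+2$.

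First I would fix, by the definition of $t_E$ as the least element of $A_E$, a quadratic character $\psi_0$ with $r_2(E^{\psi_0})=t_E$ (one may take $\psi_0=\one$ if $t_E=r_2(E)$). Then I would build inductively quadratic characters $\psi_0,\psi_1,\psi_2,\dots$ with $r_2(E^{\psi_n})=t_E+2n$: given $\psi_n$, apply Theorem~\ref{density1} to the elliptic curve $E^{\psi_n}$ to obtain a quadratic character $\chi$ with $r_2\big((E^{\psi_n})^\chi\big)=r_2(E^{\psi_n})+2=t_E+2n+2$, and set $\psi_{n+1}:=\psi_n\chi$, so that $E^{\psi_{n+1}}=(E^{\psi_n})^\chi$ has $2$-Selmer rank $t_E+2(n+1)$. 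This yields $t_E+2n\in A_E$ for every $n\ge0$, i.e. $A_E\supseteq\{r\ge t_E: r\equiv t_E\pmod 2\}$.

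For the equality clause, suppose $E$ satisfies the constant $2$-Selmer parity condition. By the result of Dokchitser--Dokchitser \cite{dd} recalled in the introduction, all elements of $A_E$ then have the same parity; since $t_E\in A_E$, every $r\in A_E$ satisfies $r\equiv t_E\pmod 2$, and $r\ge t_E$ by minimality of $t_E$. Hence $A_E\subseteq\{r\ge t_E: r\equiv t_E\pmod 2\}$, and together with the inclusion above this gives equality.

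I expect no real obstacle here: all the difficulty sits in Theorem~\ref{density1}, namely the construction, for a fixed curve, of twists raising the $2$-Selmer rank by exactly $2$. The only points needing a word of care are the identity $(E^\psi)^\chi=E^{\psi\chi}$, which keeps the iterated twists inside the family of quadratic twists of $E$, and the harmless possibility that some $\psi_{n+1}$ is trivial (this would simply say $r_2(E)=t_E+2(n+1)$, which is consistent). The ``infinitely many'' in Theorem~\ref{density1} is far more than needed; a single suitable $\chi$ at each stage suffices.
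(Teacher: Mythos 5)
Your proposal is correct and follows exactly the route the paper intends: the paper derives Theorem~\ref{density2} by applying Theorem~\ref{density1} inductively to successive twists (using $(E^\psi)^\chi=E^{\psi\chi}$), and handles the equality clause via the Dokchitser--Dokchitser constant-parity result together with the minimality of $t_E$, just as you do. No gaps.
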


For an elliptic curve $E$, clearly 
\begin{equation}
\label{subset}
A_E \subset \Z_{\ge t_E}.  
\end{equation}
We find sufficient conditions on $E$ so that equality holds in \eqref{subset} (See Theorem \ref{realplace}, Theorem \ref{multiplicative} and Theorem \ref{inc2}).
\begin{introthm}
Suppose that $\Gal(K(E[2])/K)$ has order $1$ or $2$. Suppose that either 
\begin{enumerate}
\item
$K$ has a real embedding, or
\item
$\Gal(K(E[2])/K)$ has order $2$ and $E$ has multiplicative reduction at a place $\l$ such that $\l \nmid 2$ and $v_{\l}(\Delta_E)$ is odd, where $\Delta_E$ is the discriminant of a model of $E$ and $v_{\l}$ is the normalized (additive) valuation of $K_{\l}$.
\end{enumerate}
Then $A_E = \Z_{\ge t_E}.$
\end{introthm}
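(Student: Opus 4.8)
The plan is to reduce the statement to the single assertion that $t_E+1\in A_E$. Since $A_E\subseteq\Z_{\ge t_E}$ is automatic and Theorem \ref{density2} gives $\{r\ge t_E:r\equiv t_E\pmod 2\}\subseteq A_E$, it suffices to exhibit one quadratic twist $E^{\chi_*}$ with $r_2(E^{\chi_*})=t_E+1$: applying Theorem \ref{density1} repeatedly to $E^{\chi_*}$ (each step raising the $2$-Selmer rank by $2$) then yields $\{r\ge t_E+1:r\equiv t_E+1\pmod 2\}\subseteq A_E$, and the union of the two arithmetic progressions is $\Z_{\ge t_E}$.

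To construct $E^{\chi_*}$, fix a twist $E^{\chi_0}$ with $r_2(E^{\chi_0})=t_E$. By the usual $2$-Selmer rank lowering --- which is carried out by twisting at auxiliary primes of good reduction, and these may be kept disjoint from any prescribed finite set of places --- we may take $\chi_0$ unramified at all places dividing $2\infty$ and, in case (ii), at $\l$; then $E^{\chi_0}$ still satisfies hypothesis (i) or (ii) with the same distinguished place, which we call $v_0$ (an archimedean place in case (i), or $\l$ in case (ii)). Since $-1$ acts trivially on $E[2]$, all quadratic twists of $E$ share the Galois module $E[2]$, so the groups $\Sel_2(E^\chi)$ all lie inside $H^1(G_{K,S},E[2])$ and differ only through their collections of local conditions $W_w=\im\big(E^\chi(K_w)/2E^\chi(K_w)\to H^1(K_w,E[2])\big)$, each $W_w$ a Lagrangian for the local cup-product pairing depending only on $\chi|_{K_w}$. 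Now choose a quadratic character $\eta$ ramified exactly at $\{v_0\}\cup T$, where $T$ is a finite set of auxiliary primes chosen by Chebotarev so that each $\mathfrak{p}\in T$ splits completely in $K(E[2])$, annihilates the restrictions of the finitely many relevant cohomology classes (the Selmer and dual-Selmer groups in play), and is such that $W_{\mathfrak{p}}^{E^{\chi_0\eta}}$ is adjacent to the unramified Lagrangian $H^1_{\ur}$ (intersection of codimension $1$) while the $\mathfrak{p}$-ramified twisting leaves the local root number of $E^{\chi_0}$ unchanged.

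Now compare the collections defining $\Sel_2(E^{\chi_0})$ and $\Sel_2(E^{\chi_0\eta})$: they agree away from $\{v_0\}\cup T$. Changing them one place at a time, at each $\mathfrak{p}\in T$ one replaces $H^1_{\ur}$ by the adjacent Lagrangian $W_{\mathfrak{p}}^{E^{\chi_0\eta}}$ without altering the local root number; by the quadratic-form refinement of Poitou--Tate duality (which determines the parity of the change of $\dim_{\F_2}$ of the Selmer group under such a single-place swap in terms of the local root number) the $2$-Selmer rank is unchanged, so after all of $T$ the rank is still $t_E$. At $v_0$ one replaces $W_{v_0}^{E^{\chi_0}}$ by the adjacent Lagrangian $W_{v_0}^{E^{\chi_0\eta}}$; here $v_0$ is engineered so that the local root number is flipped --- this is possible precisely because, under hypothesis (i) or (ii), the constant $2$-Selmer parity condition of Dokchitser--Dokchitser fails --- so the change in $2$-Selmer rank is now odd, and being a swap between adjacent Lagrangians it has absolute value at most $1$, hence equals $\pm1$. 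Therefore $r_2(E^{\chi_0\eta})=t_E\pm1$, and minimality of $t_E$ forces $r_2(E^{\chi_0\eta})=t_E+1$. Take $\chi_*=\chi_0\eta$.

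The main obstacle is the explicit local input underlying the previous paragraph: one must check that twisting at $v_0$ moves the Kummer condition to an \emph{adjacent} Lagrangian and flips the local root number there, and that the auxiliary primes of $T$ genuinely contribute nothing. In case (i) this asks for a computation of $H^1(K_{v_0},E[2])$ over $\R$ together with the Kummer images of $E^{\chi_0}$ and of its nontrivial real quadratic twist, with a case split according to how complex conjugation acts on $E[2]$. In case (ii) one uses Tate's uniformization to compute $\im\big(E(K_\l)/2E(K_\l)\big)$ in the split and non-split multiplicative cases and verifies that the two Lagrangians are adjacent exactly when $v_\l(\Delta_E)$ is odd --- for $v_\l(\Delta_E)$ even they coincide, compatibly with $2$-Selmer parity being possibly constant in that case, which is why the parity hypothesis is indispensable. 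Accompanying this is the bookkeeping ensuring, via a ray-class-group argument, that a quadratic character ramified exactly at $\{v_0\}\cup T$ exists for a suitable $T$, so that the net change in $2$-Selmer rank really is the single $\pm1$ coming from $v_0$.
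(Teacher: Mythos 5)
Your overall reduction (exhibit a single twist of rank exactly $t_E+1$, then apply Theorem \ref{density1} repeatedly and combine the two parity classes) is fine, but the construction of that twist has two unjustified steps, each of which is precisely where the difficulty of this theorem lives. First, you assert that a minimal twist $E^{\chi_0}$ may be chosen with $\chi_0$ unramified at all places above $2\infty$ and, in case (ii), at $\l$, ``by the usual $2$-Selmer rank lowering''. No such tool is available in the regime treated here: when $E(K)[2]\neq 0$ there is no analogue of the Mazur--Rubin rank-lowering machinery (this is exactly why $t_E$ can exceed $\dimtwo(E(K)[2])$, cf.\ Example \ref{example}), and modifying a given minimal character at finitely many places changes local conditions and can change the rank, so you cannot prescribe the ramification of a minimal twist for free. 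This matters: hypothesis (ii) is not twist-invariant, and if every minimal twist is ramified at $\l$ then $E^{\chi_0}$ has additive reduction at $\l$, so your Tate-uniformization computation of the two local conditions at $\l$ (and their adjacency when $v_{\l}(\Delta_E)$ is odd) simply does not apply to the curve you are twisting.

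Second, the engine of the argument --- ``at $v_0$ the swap is between adjacent Lagrangians and the local root number flips because constant $2$-Selmer parity fails, so the rank changes by exactly $\pm1$ at $v_0$, while the auxiliary primes $T$ contribute nothing'' --- is incorrect in the cases that need proof. The parity of the change under a local swap is measured by $h_{v}$ (Theorem \ref{parity}, Lemma \ref{hvkramer}), not by local root numbers, and $h_{v_0}$ is frequently $0$ at your distinguished place: at a real $v_0$ where complex conjugation acts nontrivially on $E[2]$ one has $H^1(K_{v_0},E[2])=0$, so every local swap at $v_0$ is vacuous (this is Case 2 of the proof of Theorem \ref{realplace}); and in case (ii) the unramified quadratic twist at $\l$ has $h_{\l}=0$ exactly when $v_{\l}(\Delta_E)$ is odd --- the paper exploits this, and the odd contribution is produced not at $v_0$ but at the auxiliary Chebotarev prime, which lies in $\cP_1$ because $\Delta_E$ is a nonsquare there (Lemma \ref{ramhv}, Proposition \ref{idea2}(i)). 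Your prescription for $T$ is also internally inconsistent: a prime split completely in $K(E[2])$ has full local $2$-torsion, so a ramified twist there moves the local condition to one meeting the unramified condition trivially (Lemma \ref{ramhv}), not to an adjacent one; and at such a prime, requiring the relevant classes to restrict to zero does not keep the rank fixed --- it prevents a drop but allows a jump of $2$, and if the prime splits in $K(E[4])$ the jump is forced (this is exactly the mechanism of Theorem \ref{inc2}). Ruling out that jump requires producing a Selmer class with nonzero restriction at the auxiliary prime (Proposition \ref{idea2}(iii)), and arranging this is where the real work lies (Case 3 of the proof of Theorem \ref{realplace}); your sketch omits it. Your idea of using minimality of $t_E$ to convert $\pm1$ into $+1$ is a nice shortcut (the paper instead uses the ``if it drops, raise by $2$'' device), but it does not rescue the construction, since the total change being $\pm1$ is exactly what has not been established.
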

Let $\Sigma$ be a finite set of places of $K$ containing all primes above $2$, all primes where $E$ has bad reduction, and all infinite places. We suppose the elements (finite places) of $\Sigma$ generate the ideal class group of $K$. For $t_E$, we have a trivial lower bound $\dim_{\Ftwo}(E(K)[2])$.
However, this lower bound turns out not to be sharp in some cases. Klagsbrun \cite{klagsbrun} found examples of elliptic curves $E$ such that $t_E$ is at least $s_2+1$, where $s_2$ denotes the number of complex places of $K$ (see Example \ref{example} and Remark \ref{extendklagsbrunexample} for a discussion of this). In Section 5, when $E[2] \subset E(K)$, we give an upper bound for $t_E$ as follows (see Theorem \ref{upperbound} and Theorem \ref{upperbound2}). 
\begin{introthm}
Suppose that $E[2] \subset E(K)$. We have
$t_E \le |\Sigma| + 1$. If moreover, $E$ does not satisfy the constant $2$-Selmer parity condition, then $t_E \le |\Sigma|.$
\end{introthm}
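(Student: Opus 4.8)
The plan is to produce, for a suitable quadratic character $\chi$, a twist $E^\chi$ with $r_2(E^\chi)\le|\Sigma|+1$; since $t_E=\min A_E\le r_2(E^\chi)$ this gives the first assertion, and the second will be obtained by one further perturbation. Throughout I use only characters $\chi$ unramified outside $\Sigma$. These form the group $K(\Sigma,2):=\{d\in K^\times/(K^\times)^2: 2\mid v(d)\text{ for all }v\notin\Sigma\}$, and since the finite places of $\Sigma$ generate the class group one gets $\dimtwo K(\Sigma,2)=|\Sigma|$; moreover, because $\Sigma$ also contains all archimedean places, the same hypothesis forces $\Sh^1(G_{K,\Sigma},E[2])=0$ (note that $E[2]\cong(\Z/2)^2$ has trivial Galois action, as $E[2]\subset E(K)$). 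Hence for each such $\chi$,
\[
\Sel_2(E^\chi)=L\cap L_\chi\ \subseteq\ V:=\bigoplus_{v\in\Sigma}H^1(K_v,E[2]),
\]
where $V$ carries the nondegenerate alternating $\Ftwo$-pairing obtained by summing the local Tate pairings (via the Weil pairing on $E[2]$), $L$ is the image of $H^1(G_{K,\Sigma},E[2])$ — a maximal isotropic subspace, because $\Sh^1(G_{K,\Sigma},E[2])=0$ — and $L_\chi:=\bigoplus_{v\in\Sigma}\im\bigl(E^\chi(K_v)/2E^\chi(K_v)\to H^1(K_v,E[2])\bigr)$ is likewise maximal isotropic. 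A local Euler characteristic count gives $\dimtwo V=4|\Sigma|$, so $\dimtwo L=\dimtwo L_\chi=2|\Sigma|$.

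Next I would record how $L_\chi$ moves with $\chi$. Since $E[2]\subseteq E(K_v)$ for every $v$, the dimension of $\mathcal L_v^\chi:=\im(E^\chi(K_v)/2)$ does not depend on $\chi$; only its position inside $H^1(K_v,E[2])$ varies, and that depends solely on $\chi_v\in K_v^\times/(K_v^\times)^2$, taking finitely many values. At a finite place of $\Sigma$ of good reduction $\mathcal L_v^\chi$ toggles between the unramified subspace and a transverse maximal isotropic according as $\chi_v$ is unramified or ramified; at a real place it is one of (generically two distinct) lines in the plane $H^1(\R,E[2])$; at places above $2$ it ranges over a similar short list. The sole constraint linking these local choices is that $(\chi_v)_{v\in\Sigma}$ lies in the image of the localization map from $K(\Sigma,2)$, which is injective (its kernel is $\Sh^1(G_{K,\Sigma},\Z/2)=0$), so $(\chi_v)_{v\in\Sigma}$ ranges over an $|\Sigma|$-dimensional subspace.

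The heart of the proof is the linear-algebra statement that, as $\chi$ runs over the $2^{|\Sigma|}$ admissible characters,
\[
\min_{\chi}\ \dimtwo\bigl(L\cap L_\chi\bigr)\ \le\ |\Sigma|+1 .
\]
I would establish this by a second-moment (averaging) computation: starting from $\#(L\cap L_\chi)=\sum_{x\in L}\#\{\chi:\res_v(x)\in\mathcal L_v^\chi\text{ for all }v\in\Sigma\}$, summing over $\chi$, and bounding for each nonzero $x\in L$ the number of $\chi$ with $x\in L_\chi$, one uses that at all but boundedly many places $v\in\Sigma$ the condition $\res_v(x)\in\mathcal L_v^\chi$ discards at least half of the local characters $\chi_v$; here $\Sh^1(G_{K,\Sigma},E[2])=0$ rules out $x$ with $\res_v(x)=0$ at every $v$, a short local computation controls the relevant fractions, and a Chebotarev equidistribution argument handles the finitely many poorly distributed classes $x$ (those built from global units, especially $-1$). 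This drives the average of $\dimtwo(L\cap L_\chi)$ below $|\Sigma|+1$, so some admissible $\chi$ has $r_2(E^\chi)\le|\Sigma|+1$, whence $t_E\le|\Sigma|+1$. For the refinement, suppose $E$ does not satisfy the constant $2$-Selmer parity condition; then by the Dokchitser--Dokchitser criterion the parity of $r_2$ is not constant over quadratic twists, and since $|\Sigma|+1>\dimtwo E(K)[2]=2$, the standard one-prime quadratic-twist perturbation lowering the $2$-Selmer rank by exactly $1$ (as in \cite{MR}) applies to the twist just produced, yielding a twist of $2$-Selmer rank $\le|\Sigma|$ and hence $t_E\le|\Sigma|$.

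I expect the averaging step to be the main obstacle: the local behaviours $(\chi_v)_{v\in\Sigma}$ are not free — global reciprocity confines them to an $|\Sigma|$-dimensional subspace — so the argument must genuinely exploit that $L$ is a \emph{specific} maximal isotropic subspace rather than an arbitrary one, and the sharp error term $+1$ (and its removal when constant parity fails) only comes out after carefully isolating the ``diagonal'' classes in $L$ arising from $-1$ and from $\Sigma$-units; a purely soft count gives nothing better than the trivial bound $r_2(E^\chi)\le 2|\Sigma|$.
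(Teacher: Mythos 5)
Your setup is fine as far as it goes: for $\chi$ unramified outside $\Sigma$ one does have $\Sel_2(E^\chi)=L\cap L_\chi$ inside $\bigoplus_{v\in\Sigma}H^1(K_v,E[2])$, with $L$ and $L_\chi$ maximal isotropic (although the count $\dimtwo V=4|\Sigma|$ is wrong at places above $2$ and at archimedean places, this is inessential). The genuine gap is that the entire theorem has been pushed into the unproved claim $\min_\chi\dimtwo(L\cap L_\chi)\le|\Sigma|+1$, taken over only the $2^{|\Sigma|}$ characters in $K(\Sigma,2)$, and the mechanism you sketch for it cannot work as described: having forbidden ramification outside $\Sigma$, you have no auxiliary primes left to choose, so ``Chebotarev equidistribution'' has nothing to act on, and the components $(\chi_v)_{v\in\Sigma}$ are confined to an $|\Sigma|$-dimensional subspace, so the independence heuristic (``each place discards half the characters'') carries no force --- as you concede in your final paragraph. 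This is precisely where the paper proceeds differently: it first shows $r_2(E,\psi_{v_0})\le 2|\Sigma|$ because Selmer classes are homomorphisms $G_K\to E[2]$ cutting out extensions unramified outside $\Sigma$, hence factoring through $\Gal(K(\sqrt{\O_{K,\Sigma}^\times})/K)$ (Proposition \ref{2n}); then, when the rank is $n+k$ with $k\ge2$, Lemma \ref{linearalgebrasurj} produces $k$ Frobenius classes where restriction is surjective, Chebotarev supplies primes $\omega_1,\dots,\omega_k\notin\Sigma$ realizing them, Poitou--Tate duality (Theorem \ref{ptd}, Corollary \ref{ressur}) drops the rank by $2k$ upon imposing ramified conditions there, and Lemma \ref{idea} converts these local conditions into an honest global twist at the cost of one further auxiliary ramified prime $\l$; that extra prime, together with Theorem \ref{inc2}, is the source of the $+1$. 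Twists ramified at well-chosen primes outside $\Sigma$ are thus the engine of the proof, and your restriction to $\Sigma$-unramified twists removes it.

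The refinement to $t_E\le|\Sigma|$ is also not justified. Since $E[2]\subset E(K)$, every prime outside $\Sigma$ lies in $\cP_2$, so by Theorem \ref{parity} and Lemma \ref{ramhv} twisting at any single prime outside $\Sigma$ changes the $2$-Selmer rank by an even amount; there is no ``standard one-prime perturbation lowering the rank by exactly $1$'' available here (the Mazur--Rubin $\pm1$ results use primes $\l$ with $\dimtwo E(K_\l)[2]=1$, which do not exist in this setting). To change the parity one must alter the local character at some $v_0\in\Sigma$, and the actual difficulty --- occupying the paper's Theorem \ref{upperbound2} via Theorem \ref{parity}, Proposition \ref{lastcase}, Proposition \ref{idea2}(iii) and Lemma \ref{finallemma} --- is to do so while simultaneously controlling the rank and realizing the prescribed local behaviour by a genuine global character. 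Knowing from Dokchitser--Dokchitser that \emph{some} twist has the opposite parity, together with \emph{some} twist having rank $\le|\Sigma|+1$, does not combine to yield a twist of rank $\le|\Sigma|$; this step needs the paper's case analysis or an argument of comparable substance.
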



\section{preliminaries}
Let $K$ be a number field and $v$ be a place of $K$. We denote the completion of $K$ at $v$ by $K_v$. Let $E$ be an elliptic curve defined over $K$. We write $E^\chi$ for the quadratic twist of $E$ by a quadratic character $\chi$. For $d \in K^\times/(K^\times)^2$, we sometimes write $E^d$ for $E^\chi$ when $K(\sqrt{d})$ is the corresponding quadratic extension to $\chi$. For any quadratic twists $E^\chi$ of $E$, note that there is a canonical ($G_K$-module) isomorphism $E[2] \cong E^\chi[2]$. Throughout the paper, for (topological) groups A and B, we denote the group of continuous homomorphisms from $A$ to $B$ simply by $\Hom(A, B)$.

\begin{defn}
Let $L$ be a field of characteristic $0$. We write
$$
\Xset(L) := \Hom(G_L, \{\pm1\}).
$$
If $L$ is a local field, we often identify $\Xset(L)$ with $\Hom(L^\times, \{\pm1\})$ via the local reciprocity map, and let $\Xset_\ram(L) \subset \Xset(L)$ be the subset of ramified characters in $\Xset(L)$ ($\chi \in \Xset_\ram(L)$ if and only if $\chi(\O_L^\times) \neq 1$, where $\O_L^\times$ is the unit group of the ring of integers $\O_L$ of $L$, by local class field theory). 
\end{defn}

\begin{defn}
\label{localcondition}
For $\chi \in \Xset(K_v)$, define
$$
\alpha_v(\chi) := \mathrm{Im}( E^{\chi}(K_v)/2 E^{\chi}(K_v) \to H^1(K_v, E^{\chi}[2]) \cong H^1(K_v, E[2])),
$$
where the first map is given by the Kummer map. Define
$$
h_v(\chi):= \dimtwo(\alpha_v(1_v)/(\alpha_v(1_v) \cap \alpha_v(\chi))).
$$
\end{defn}

\begin{lem}
\label{hvkramer}
For $\chi \in \Xset(K_v)$, let $L = \overline{K_v}^{\ker(\chi)}$. Then
$$
h_v(\chi) = \dimtwo(E(K_v)/\N E(L)),
$$
where $\N E(L)$ is the image of the norm map $\N: E(L) \to E(K_v)$. 
\end{lem}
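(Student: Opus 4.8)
This is a local statement, which I would prove by reducing $h_v(\chi)$ to a Tate cohomology group of $\Gal(L/K_v)$ (in substance this is Kramer's local norm‑index computation, recast in the present notation). If $\chi = 1_v$ then $L = K_v$, $\N E(L) = E(K_v)$, $\alpha_v(1_v) = \alpha_v(\chi)$, and both sides vanish; so assume $\chi \ne 1_v$, i.e.\ $[L:K_v]=2$, and write $G = \Gal(L/K_v) = \langle\sigma\rangle$. Over $L$ there is an isomorphism $\phi\colon E^\chi \xrightarrow{\sim} E$ with $\phi^\sigma = -\phi$; its restriction to $2$‑torsion is the canonical $G_{K_v}$‑isomorphism $E^\chi[2]\cong E[2]$, it carries $E^\chi(K_v)$ isomorphically onto $E(L)^- := \{P\in E(L):\sigma P = -P\}$, and $E(K_v) = E(L)^+$. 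I would also record the preliminary fact that $\dimtwo(E(K_v)/2E(K_v))$ equals $\dimtwo E(K_v)[2]$ plus a quantity depending only on $K_v$ (not on $E$); since $E[2]\cong E^\chi[2]$ forces $E(K_v)[2] = E^\chi(K_v)[2]$, this gives $\dimtwo\alpha_v(1_v) = \dimtwo\alpha_v(\chi) = \dimtwo(E(L)^-/2E(L)^-)$, the last equality via $\phi$.

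The heart of the argument is the identity $h_v(\chi) = \dimtwo\bigl(E(L)^-/(1-\sigma)E(L)\bigr)$. Indeed, $\alpha_v(1_v)$ is exactly the kernel of $f\colon H^1(K_v,E[2])\to H^1(K_v,E)$, so restricting $f$ to $\alpha_v(\chi)$ and using $\dimtwo\alpha_v(1_v) = \dimtwo\alpha_v(\chi)$ yields $h_v(\chi) = \dimtwo f(\alpha_v(\chi))$. Since the classes in $\alpha_v(\chi)$ become Kummer classes of $E(L)$ after restriction to $L$ (this is where $E^\chi_L \cong E_L$ enters), $f(\alpha_v(\chi))$ lies in $\ker(\res_{L/K_v}\colon H^1(K_v,E)\to H^1(L,E))$, which by inflation--restriction is $H^1(G,E(L)) = \hat H^{-1}(G,E(L)) = E(L)^-/(1-\sigma)E(L)$. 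An explicit cocycle computation --- representing a class of $\alpha_v(\chi)$ by $g\mapsto \chi(g)\,gQ - Q$ with $2Q = \phi(P')$, and tracing it through the canonical identification $E^\chi[2]\cong E[2]$ and the inflation isomorphism --- then shows that the induced map $E^\chi(K_v)/2E^\chi(K_v)\to E(L)^-/(1-\sigma)E(L)$ is simply $P'\mapsto \phi(P')\bmod (1-\sigma)E(L)$. As $\phi$ carries $E^\chi(K_v)$ onto all of $E(L)^-$, this map is surjective, which gives the identity.

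It remains to match $\dimtwo(E(L)^-/(1-\sigma)E(L))$ with $\dimtwo(E(K_v)/\N E(L))$, and here I would argue by elementary manipulation of the trace and difference operators on $E(L)$. Writing $\mathrm{tr} = 1+\sigma$ and $\mathrm{d} = 1-\sigma$, one has $\mathrm{tr}(E(L)) = \N E(L)$, $\mathrm{tr}(E(K_v)) = 2E(K_v)$, $\ker\mathrm{tr} = E(L)^-$, and symmetrically $\mathrm{d}(E(L)) = (1-\sigma)E(L)$, $\mathrm{d}(E(L)^-) = 2E(L)^-$, $\ker\mathrm{d} = E(K_v)$; hence both $\N E(L)/2E(K_v)$ and $(1-\sigma)E(L)/2E(L)^-$ are canonically isomorphic to $E(L)/\bigl(E(K_v)+E(L)^-\bigr)$. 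Combined with $\dimtwo(E(K_v)/2E(K_v)) = \dimtwo(E(L)^-/2E(L)^-)$ from the first paragraph, this gives $\dimtwo(E(K_v)/\N E(L)) = \dimtwo(E(L)^-/(1-\sigma)E(L)) = h_v(\chi)$. The step I expect to be most delicate is the cocycle bookkeeping in the second paragraph --- keeping track of the $\chi$‑twist, the canonical identification $E^\chi[2]\cong E[2]$, and the inflation isomorphism at the same time --- together with checking that the archimedean case $K_v=\R$ (with $L=\C$) genuinely follows the same pattern.
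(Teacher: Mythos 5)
Your argument is correct, and all the delicate points you flag check out: the cocycle $g\mapsto\chi(g)gQ-Q$ does map to the class of $\phi(P')$ in $H^1(\Gal(L/K_v),E(L))\cong E(L)^-/(1-\sigma)E(L)$, the equality $\dimtwo\alpha_v(1_v)=\dimtwo\alpha_v(\chi)$ follows as you say from the fact that $\dimtwo(E(K_v)/2E(K_v))-\dimtwo E(K_v)[2]$ depends only on $K_v$, and the trace/difference bookkeeping identifying both $\N E(L)/2E(K_v)$ and $(1-\sigma)E(L)/2E(L)^-$ with $E(L)/(E(K_v)+E(L)^-)$ is sound, also at a real place with $L=\C$. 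The paper itself gives no argument here: it simply cites Kramer's Proposition 7, which is exactly this local norm-index computation, so what you have written is in substance a self-contained reconstruction of the cited proof rather than a genuinely different route; the only trade-off is length versus self-containedness, and your version has the small advantage of making explicit where the canonical identification $E^\chi[2]\cong E[2]$ and the isomorphism $E^\chi(K_v)\cong E(L)^-$ enter, which the paper leaves implicit.
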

\begin{proof}
This is \cite[Proposition 7]{kramer}. 
\end{proof}

\begin{thm}
\label{tlp}
The Tate local duality and the Weil pairing give a nondegenerate pairing
\begin{equation}
\label{tld}
\langle  \text{ , } \rangle_v : H^1(K_v, E[2]) \times H^1(K_v, E[2]) \too H^2(K_v, \{\pm1\}),
\end{equation}
where $H^2(K_v, \{\pm1\}) \cong \Ftwo$ unless $v$ is a complex place.
\end{thm}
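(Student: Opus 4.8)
The plan is to obtain this as a special case of the general local Tate duality theorem, using the Weil pairing to make $E[2]$ self-dual. Recall that for any finite $G_{K_v}$-module $M$ with Cartier dual $M' := \Hom(M, \mu)$, the cup product followed by the evaluation map $M \otimes M' \to \mu$ yields pairings
\[
H^i(K_v, M) \times H^{2-i}(K_v, M') \too H^2(K_v, \mu) \isom \Q/\Z
\]
that are nondegenerate for $v$ non-archimedean, and perfect in the appropriate modified-cohomology sense for $v$ archimedean (see, for instance, Milne, \emph{Arithmetic Duality Theorems}, Ch.~I, or Serre, \emph{Galois Cohomology}, II.5.2). I would invoke this wholesale rather than reprove it.

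To specialize, I would identify the Cartier dual of $M = E[2]$. The Weil pairing $e_2 \colon E[2] \times E[2] \to \mu_2$ is $G_{K_v}$-equivariant, alternating, and perfect, so $x \mapsto e_2(x, \cdot)$ is an isomorphism $E[2] \isom \Hom(E[2], \mu_2)$ of $G_{K_v}$-modules; since $\mathrm{char}\,K_v = 0$ we further identify $\mu_2 = \{\pm 1\}$. Taking $M = E[2]$ and $i = 1$ in the duality statement and composing the cup product with the coefficient map induced by $e_2$ gives exactly the pairing $\langle\,,\,\rangle_v$ of the theorem, whose nondegeneracy is then inherited from local duality.

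Finally I would pin down the target: from the Kummer sequence $1 \to \mu_2 \to \mathbf{G}_m \map{2} \mathbf{G}_m \to 1$ and Hilbert 90 one gets $H^2(K_v, \{\pm1\}) \cong \mathrm{Br}(K_v)[2]$, which is $\Ftwo$ when $v$ is finite (as $\mathrm{Br}(K_v) \cong \Q/\Z$ is divisible) and when $v$ is real (as $\mathrm{Br}(\R) \cong \tfrac12\Z/\Z$), while $\mathrm{Br}(\C) = 0$; for complex $v$ the group $G_{K_v}$ is trivial, so $H^1(K_v, E[2]) = 0$ as well and the pairing is the vacuously nondegenerate pairing $0 \times 0 \to 0$, which explains the parenthetical restriction. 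The only point needing real care is the archimedean case of local duality, where one must pass to Tate (modified) cohomology and check that it agrees with ordinary $H^1$ and that the pairing remains perfect; everything else is bookkeeping, which is why I would ultimately just cite the duality theorem.
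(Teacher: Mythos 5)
Your proposal is correct and takes essentially the same route as the paper, which simply cites the standard local Tate duality theorem (there, \cite[Theorem 7.2.6]{cohomology}) with the Weil pairing providing the self-duality of $E[2]$; your additional bookkeeping about the Cartier dual, the archimedean cases, and the identification $H^2(K_v,\{\pm1\})\cong\mathrm{Br}(K_v)[2]$ just makes explicit what that citation packages.
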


\begin{proof}
For example, see \cite[Theorem 7.2.6]{cohomology}.
\end{proof}


\section{Selmer groups and comparing local conditions}

Let $K$ be a number field and $E$ be an elliptic curve defined over $K$. We fix embeddings $\overline{K} 
\hookrightarrow \overline{K_v}$ for all places $v$ so that $G_{K_v} \subset G_K$.

\begin{defn}
\label{restrictionmap}
For every place $v$ of $K$, we let
$$
\res_v : H^1(K, E[2]) \to H^1(K_v, E[2])
$$
denote the restriction map of group cohomology. Let $T$ be a finite set of places. let
$$
\res_T :  H^1(K, E[2]) \to \bigoplus_{v \in T} H^1(K_v, E[2])
$$
denote the sum of restriction maps. 
\end{defn}

\begin{defn}
\label{selmer}
Let $\chi \in \Xset(K)$. The $2$-Selmer group $\Sel_2(E^\chi) \subset H^1(K,E[2])$ is the (finite)
$\F_2$-vector space defined by the following exact sequence
$$
0 \too \Sel_2(E^\chi) \too H^1(K,E[2]) \too \dirsum{v} H^1(K_v,E[2])/\alpha_v(\chi_v),
$$
where the rightmost map is the sum of the restriction maps, and $\chi_v$ is the restriction of $\chi$ to $G_{K_v}$. In particular, if $\chi$ is the trivial character, it is the classical $2$-Selmer group of $E$. 
\end{defn}
We define various Selmer groups as follows.

\begin{defn}
\label{variousselmer}
Let $T$ be a finite set of places of $K$. Let $S = \{v_1, \cdots, v_k\}$ be a (finite) set of places such that $S \cap T = \varnothing$. Let $\psi_{v_j} \in \Xset(K_{v_j})$.  
Define
\begin{align*}
\Sel_{2}(E, \psi_{v_1}, \cdots, \psi_{v_k}) := \{x \in H^1(K, E[2])|& \mathrm{res}_{v}(x) \in \alpha_v(1_v) \text{ if } v \notin S, \text{ and }\\
 &\mathrm{res}_{v_j}(x) \in \alpha_{v_j}(\psi_{v_j}) \text{ for $1 \le j \le k$} \}.
\end{align*}
Define
$$
\Sel_{2, T}(E, \psi_{v_1}, \cdots, \psi_{v_k}) := \{x \in \Sel_{2}(E, \psi_{v_1}, \cdots, \psi_{v_k})|\mathrm{res}_{T}(x) = 0  \}.
$$
Define
\begin{align*}
\Sel_{2}^T(E, \psi_{v_1}, \cdots, \psi_{v_k}) := \{x \in H^1(K, E[2])|& \mathrm{res}_{v}(x) \in \alpha_v(1_v) \text{ if } v \notin S \cup T, \text{ and }\\
 &\mathrm{res}_{v_j}(x) \in \alpha_{v_j}(\psi_{v_j}) \text{ for $1 \le j \le k$} \}.
\end{align*}
For a place $v\notin S$, we simply write $\Sel_{2, v}(E, \psi_{v_1}, \cdots, \psi_{v_k}), \text{ } \Sel_2^v(E, \psi_{v_1}, \cdots, \psi_{v_k})$ for $\Sel_{2, \{v\}}(E, \psi_{v_1}, \cdots, \psi_{v_k})$, $\Sel_2^{\{v\}}(E, \psi_{v_1}, \cdots, \psi_{v_k})$, respectively. 
\end{defn}

\begin{defn}
For convenience, we write $r_2(E^\chi), r_2(E, \psi_{v_1},\cdots, \psi_{v_n})$ for $\dimtwo(\Sel_2(E^\chi)), \dimtwo(\Sel_{2}(E, \psi_{v_1},\cdots, \psi_{v_n}))$, respectively. 
\end{defn}

The following theorem is due to \cite[Theorem 3.9 and Lemma 5.2(ii)]{KMR}.

\begin{thm}[Kramer, Klagsbrun-Mazur-Rubin]
\label{parity}
Let $\chi \in \Xset(K)$. We have
$$
 r_2(E) -  r_2(E^\chi) \equiv \displaystyle{\sum_{v}}h_v(\chi_v) (\textrm{mod } 2),
$$
where $\chi_v$ is the restriction of $\chi$ to $G_{K_v}$ and $h_v$ is given in Definition \ref{localcondition}. Let $S = \{v_1, \cdots, v_k\}$ be a (finite) set of places. Let $\psi_{v_i} \in \Xset(K_{v_i})$. We have
$$
r_2(E, \psi_{v_1}, \cdots, \psi_{v_k}) - r_2(E) \equiv \displaystyle{\sum_{i=1}^k}h_{v_i}(\psi_{v_i}) (\textrm{mod } 2).
$$
\end{thm}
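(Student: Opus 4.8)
The plan is to recognize both congruences as instances of one comparison statement for $2$-Selmer groups of $E[2]$ with varying local conditions, to indicate how that statement comes out of Poitou--Tate global duality, and then to quote \cite{KMR} for the single genuinely hard input (the parity, as opposed to a mere bound). By a \emph{Selmer structure} I mean a choice, for each place $v$, of a subspace $\mathcal{F}_v\subseteq H^1(K_v,E[2])$ with $\mathcal{F}_v=\alpha_v(1_v)$ for all but finitely many $v$, with associated Selmer group $\Sel_\mathcal{F}\subseteq H^1(K,E[2])$. Recall the classical fact that for every $v$ and every $\chi\in\Xset(K_v)$ the space $\alpha_v(\chi)$ is its own annihilator under the pairing of Theorem \ref{tlp} (self-orthogonality of the image of the Kummer map; trivially so at complex $v$, where $H^1(K_v,E[2])=0$), so $\alpha_v(\chi)$ is maximal isotropic and $\dimtwo\alpha_v(\chi)$ is independent of $\chi$. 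Let $\mathcal{G}$ have $\mathcal{G}_v=\alpha_v(1_v)$ for all $v$, so $\Sel_\mathcal{G}=\Sel_2(E)$; for the first congruence take $\mathcal{F}_v=\alpha_v(\chi_v)$ for all $v$, so $\Sel_\mathcal{F}=\Sel_2(E^\chi)$, and for the second take $\mathcal{F}_{v_i}=\alpha_{v_i}(\psi_{v_i})$ and $\mathcal{F}_v=\alpha_v(1_v)$ for $v\notin S$, so $\Sel_\mathcal{F}=\Sel_2(E,\psi_{v_1},\dots,\psi_{v_k})$. In either case $\mathcal{F}$ and $\mathcal{G}$ agree outside a finite set, and at each $v$ where they differ the discrepancy $\dimtwo(\mathcal{F}_v/(\mathcal{F}_v\cap\mathcal{G}_v))$ equals $\dimtwo(\mathcal{G}_v/(\mathcal{F}_v\cap\mathcal{G}_v))$ (because $\dimtwo\mathcal{F}_v=\dimtwo\mathcal{G}_v$), which is by definition $h_v(\chi_v)$, resp. $h_{v_i}(\psi_{v_i})$. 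So it is enough to prove: for Selmer structures $\mathcal{F},\mathcal{G}$ with all local conditions maximal isotropic,
\[
\dimtwo\Sel_\mathcal{F}-\dimtwo\Sel_\mathcal{G}\;\equiv\;\sum_v\dimtwo\bigl(\mathcal{F}_v/(\mathcal{F}_v\cap\mathcal{G}_v)\bigr)\pmod 2 .
\]

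To prove this I would induct on the number of places where $\mathcal{F}$ and $\mathcal{G}$ differ, so may assume they differ only at one place $v_0$. Let $\mathcal{H},\mathcal{K}$ agree with $\mathcal{F}=\mathcal{G}$ away from $v_0$ and satisfy $\mathcal{H}_{v_0}=\mathcal{F}_{v_0}\cap\mathcal{G}_{v_0}$, $\mathcal{K}_{v_0}=\mathcal{F}_{v_0}+\mathcal{G}_{v_0}$. Since $E[2]$ is self-dual under the Weil pairing and each local condition here is self-orthogonal, the dual Selmer structures are $\mathcal{F}^*=\mathcal{F}$, $\mathcal{G}^*=\mathcal{G}$, $\mathcal{H}^*=\mathcal{K}$. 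The Poitou--Tate (Greenberg--Wiles) exact sequence for $\mathcal{H}\subseteq\mathcal{F}$ reads
\[
0\to\Sel_\mathcal{H}\to\Sel_\mathcal{F}\to\mathcal{F}_{v_0}/\mathcal{H}_{v_0}\to\Sel_\mathcal{K}^\vee\to\Sel_\mathcal{F}^\vee\to 0
\]
($\vee$ = Pontryagin dual); since $\dimtwo\Sel_\mathcal{F}=\dimtwo\Sel_\mathcal{F}^\vee$, the alternating sum of dimensions gives $\dimtwo\Sel_\mathcal{K}-\dimtwo\Sel_\mathcal{H}=\dimtwo(\mathcal{F}_{v_0}/\mathcal{H}_{v_0})$, the discrepancy at $v_0$. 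Moreover $\Sel_\mathcal{H}=\Sel_\mathcal{F}\cap\Sel_\mathcal{G}$ and $\Sel_\mathcal{F}+\Sel_\mathcal{G}\subseteq\Sel_\mathcal{K}$ inside $H^1(K,E[2])$, so
\[
\dimtwo\Sel_\mathcal{F}+\dimtwo\Sel_\mathcal{G}=\dimtwo\Sel_\mathcal{H}+\dimtwo(\Sel_\mathcal{F}+\Sel_\mathcal{G})\equiv\dimtwo\Sel_\mathcal{H}+\dimtwo\Sel_\mathcal{K}\pmod 2
\]
\emph{provided} $\Sel_\mathcal{K}/(\Sel_\mathcal{F}+\Sel_\mathcal{G})$ is even-dimensional. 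Granting that, $\dimtwo\Sel_\mathcal{F}-\dimtwo\Sel_\mathcal{G}\equiv\dimtwo\Sel_\mathcal{F}+\dimtwo\Sel_\mathcal{G}\equiv\dimtwo\Sel_\mathcal{K}-\dimtwo\Sel_\mathcal{H}=\dimtwo(\mathcal{F}_{v_0}/\mathcal{H}_{v_0})\pmod 2$, which closes the induction.

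The even-dimensionality of $\Sel_\mathcal{K}/(\Sel_\mathcal{F}+\Sel_\mathcal{G})$ is the crux and the step I expect to be the main obstacle; it is genuinely a statement about the \emph{quadratic} refinement of the local pairings, not merely about the bilinear pairing of Theorem \ref{tlp}. Restriction at $v_0$ sends $\Sel_\mathcal{K}$ onto a subspace $I$ of the space $\mathcal{K}_{v_0}/\mathcal{H}_{v_0}$, which (by the dimension count $\dimtwo\mathcal{F}_{v_0}=\dimtwo\mathcal{G}_{v_0}$) is hyperbolic with complementary maximal isotropics the images of $\mathcal{F}_{v_0}$ and of $\mathcal{G}_{v_0}$; Poitou--Tate reciprocity $\sum_v\langle\res_v x,\res_v y\rangle_v=0$ together with the isotropy of all local conditions away from $v_0$ forces $I$ to be isotropic, and one identifies $\Sel_\mathcal{K}/(\Sel_\mathcal{F}+\Sel_\mathcal{G})$ with the ``graph part'' of $I$ relative to that decomposition. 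To see this graph part is even-dimensional one invokes the compatible family of quadratic forms $q_v$ on $H^1(K_v,E[2])$ refining $\langle\cdot,\cdot\rangle_v$, vanishing on every $\alpha_v(\cdot)$, and satisfying the global product formula $\sum_v q_v(\res_v x)=0$ (the quadratic refinement of Poonen and Rains): it upgrades the isotropy of $I$ to $q_{v_0}$-isotropy, which makes the induced form on the graph part alternating and hence forces even dimension. Without this product-formula/alternating input one gets only the bound $|\dimtwo\Sel_\mathcal{F}-\dimtwo\Sel_\mathcal{G}|\le\dimtwo(\mathcal{F}_{v_0}/\mathcal{H}_{v_0})$, which is formal from the exact sequence above.

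Since that last ingredient is precisely Kramer's contribution, packaged in the form needed here inside \cite[Theorem 3.9]{KMR}, the efficient route is not to redevelop it but to cite: the first congruence is \cite[Theorem 3.9]{KMR}, with $h_v(\chi_v)$ rewritten as $\dimtwo(E(K_v)/\N E(L))$ via Lemma \ref{hvkramer} (Kramer's original formulation); the second is \cite[Lemma 5.2(ii)]{KMR}, applied to the Selmer structure carrying the conditions $\alpha_{v_i}(\psi_{v_i})$ at $v_1,\dots,v_k$ and $\alpha_v(1_v)$ elsewhere.
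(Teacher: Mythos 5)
Your proposal is correct and ultimately takes the same route as the paper, which gives no independent argument and simply attributes both congruences to \cite[Theorem 3.9 and Lemma 5.2(ii)]{KMR} (with $h_v$ translated via Lemma \ref{hvkramer}); your concluding citations match this exactly. The preliminary sketch via Poitou--Tate duality and the quadratic (Kramer/Poonen--Rains) refinement is a faithful outline of what happens inside \cite{KMR}, and you correctly isolate the even-dimensionality step as the one ingredient that cannot be obtained from the bilinear pairing of Theorem \ref{tlp} alone.
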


\begin{lem}
 \label{localzero}
Let $\chi \in \Xset(K_v)$. Suppose that $\chi$ satisfies one of the following conditions:
\begin{itemize}
\item
$\chi$ is trivial, or
\item
$E/K_v$ has good reduction, $v \nmid \infty$, and $\chi$ is unramified.
\end{itemize}
Then $h_v(\chi) = 0$, i.e., $\alpha_v(1_v) = \alpha_v(\chi)$.
 \end{lem}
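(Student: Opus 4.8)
The plan is to reduce, via Lemma~\ref{hvkramer}, to the surjectivity of a norm map, prove that surjectivity using the good reduction of $E$, and then upgrade the resulting inclusion to an equality by a dimension count.

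If $\chi$ is trivial there is nothing to prove, since then $\alpha_v(\chi) = \alpha_v(1_v)$ by definition. So assume $E/K_v$ has good reduction, $v \nmid \infty$, and $\chi$ is a nontrivial unramified character, and set $L := \overline{K_v}^{\ker(\chi)}$, the unramified quadratic extension of $K_v$, and $G := \Gal(L/K_v) \cong \Z/2\Z$. By Lemma~\ref{hvkramer} we have $h_v(\chi) = \dimtwo(E(K_v)/\N E(L)) = \dimtwo \hat H^0(G, E(L))$, so it suffices to prove that the norm map $\N\colon E(L) \to E(K_v)$ is surjective.

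To see this, I would use the good reduction: writing $\hat E$ for the formal group and $\tilde E$ for the reduction, there is a $G$-equivariant short exact sequence $0 \to \hat E(\m_L) \to E(L) \to \tilde E(\F_L) \to 0$, where $\F_L$ is the residue field of $L$. Taking the long exact sequence of Tate cohomology of the cyclic group $G$, it is enough to show $\hat H^0(G, \hat E(\m_L)) = 0$ and $\hat H^0(G, \tilde E(\F_L)) = 0$. For the reduction term, $\tilde E(\F_L)$ is finite so its Herbrand quotient is $1$, whence $|\hat H^0(G, \tilde E(\F_L))| = |H^1(G, \tilde E(\F_L))|$; the latter injects, by inflation, into $H^1(G_{\F_{K_v}}, \tilde E(\overline{\F_{K_v}}))$, which vanishes by Lang's theorem. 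For the formal-group term, $L/K_v$ being unramified means $\m_L = \varpi\O_L$ for a uniformizer $\varpi$ of $K_v$, so the graded pieces $\hat E(\m_L^n)/\hat E(\m_L^{n+1})$ are isomorphic as $G$-modules to the additive group $\m_L^n/\m_L^{n+1} \cong \F_L$, on which the norm is the (surjective) field trace $\F_L \to \F_{K_v}$; a standard successive-approximation argument using the completeness of $\hat E(\m_L)$ then gives surjectivity of $\N\colon \hat E(\m_L) \to \hat E(\m_{K_v})$, i.e. $\hat H^0(G, \hat E(\m_L)) = 0$. Hence $\hat H^0(G, E(L)) = 0$ and $h_v(\chi) = 0$.

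Finally, $h_v(\chi) = 0$ means precisely $\alpha_v(1_v) = \alpha_v(1_v) \cap \alpha_v(\chi)$, i.e. $\alpha_v(1_v) \subseteq \alpha_v(\chi)$. To conclude that this inclusion is an equality I would compare $\Ftwo$-dimensions: the Kummer map is injective, so $\dimtwo \alpha_v(\chi) = \dimtwo E^\chi(K_v)/2E^\chi(K_v)$, and by the standard local formula this equals $\dimtwo E^\chi(K_v)[2]$ plus a quantity depending only on $K_v$; since $E^\chi[2] \cong E[2]$ as $G_{K_v}$-modules we have $E^\chi(K_v)[2] = E(K_v)[2]$, and therefore $\dimtwo \alpha_v(\chi) = \dimtwo \alpha_v(1_v)$, forcing $\alpha_v(\chi) = \alpha_v(1_v)$. (Alternatively, since an unramified quadratic twist of a curve with good reduction at $v$ again has good reduction at $v$, one may apply the previous paragraph to $E^\chi$ to obtain the reverse inclusion $\alpha_v(\chi) \subseteq \alpha_v(1_v)$ directly.) The only substantive step is the norm surjectivity of the middle paragraph; it is formal once one invokes Lang's theorem and the completeness of the formal group, and is in any case a well-known property of unramified extensions, so I do not anticipate a real obstacle.
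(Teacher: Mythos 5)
Your proof is correct and follows essentially the same route as the paper: reduce via Lemma~\ref{hvkramer} to surjectivity of the norm map $\N\colon E(L)\to E(K_v)$ for the unramified quadratic extension $L/K_v$, where the paper simply cites this surjectivity (its reference [norm, Corollary 4.4]) while you reprove it in-line via the formal group filtration and Lang's theorem. Your additional dimension-count (or symmetry) step upgrading $h_v(\chi)=0$, i.e.\ $\alpha_v(1_v)\subseteq\alpha_v(\chi)$, to the equality $\alpha_v(1_v)=\alpha_v(\chi)$ is a legitimate filling-in of a point the paper leaves implicit (both local conditions have the same $\Ftwo$-dimension), so there is no gap.
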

\begin{proof}
Let $L = \overline{K}_v^{\ker(\chi)}$. In either case, $\N(E(L)) = E(K_v)$ (in the second case, it follows from \cite[Corollary 4.4]{norm}). Thus, by Lemma \ref{hvkramer}, the result follows.
\end{proof}

From now on, let $\Sigma$ denote a finite set of places of $K$ containing all primes above $2$, all primes where $E$ has bad reduction, and all infinite places.

\begin{defn}
\label{cpi}
Define
\begin{align*}
\cP_{i} &:=\; \{\l : \text{$\l \notin \Sigma$
   and $\dim_{\Ftwo}(E(K_\l)[2]) = i$}\}\quad \text{for $0 \le i \le 2$}, \text{ and} \\
\cP &:=\; \cP_0 \textstyle\coprod \cP_1 \coprod \cP_2  = \{\l : \l \notin \Sigma \}.
\end{align*}
\end{defn}
Although $\cP_i$ and $\cP$ depend on the choice of $\Sigma$ and $E$, we suppress them from the notation. 
\begin{rem}
\label{localdim}
By \cite[Lemma 2.11(i)]{mj}, if $\l \in \cP$, we have 
$$\dimtwo(E(K_\l)/2E(K_\l)) = \dimtwo(E(K_\l)[2]).$$ Hence, if $\l \in \cP_i$ and $\chi \in \Xset(K_\l)$, we have $\dimtwo(\alpha_\l(\chi)) = i$.     
\end{rem}

\begin{lem}
\label{ramhv}
Let $\l \in \cP_i$. Suppose that $\chi \in \Xset_{\ram}(K_\l)$. Then $\alpha_\l(1_\l) \cap \alpha_\l(\chi) = \{0 \}$,
and $h_\l(\chi) = i$.
\end{lem}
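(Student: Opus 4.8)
The plan is to reduce the statement to two separate facts: $\alpha_\l(1_\l) \cap \alpha_\l(\chi) = \{0\}$, and then derive $h_\l(\chi) = i$ from this together with the dimension count in Remark \ref{localdim}. For the second part, recall that by definition $h_\l(\chi) = \dimtwo(\alpha_\l(1_\l)/(\alpha_\l(1_\l) \cap \alpha_\l(\chi)))$; once we know the intersection is trivial, this equals $\dimtwo(\alpha_\l(1_\l))$, which is $i$ by Remark \ref{localdim} since $\l \in \cP_i$. So the crux is the first assertion about the intersection being zero.

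For the intersection statement, I would first dispose of the case $i = 0$, where $\alpha_\l(1_\l) = 0$ by Remark \ref{localdim} and there is nothing to prove. For $i \ge 1$, the key input is that $\l \notin \Sigma$, so $E$ has good reduction at $\l$ and $\l \nmid 2\infty$. Since $\chi \in \Xset_\ram(K_\l)$ is ramified and (being quadratic) the corresponding extension $L = \overline{K_\l}^{\ker\chi}$ is a ramified quadratic extension of the unramified-residue-characteristic local field $K_\l$. The subgroups $\alpha_\l(1_\l)$ and $\alpha_\l(\chi)$ live inside $H^1(K_\l, E[2])$, and the natural thing is to use the Tate local duality pairing from Theorem \ref{tlp}: each $\alpha_\l(\psi)$ is its own orthogonal complement (the images of the Kummer maps are maximal isotropic / Lagrangian for the local pairing twisted appropriately). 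I would show $\alpha_\l(1_\l)$ and $\alpha_\l(\chi)$ are ``transverse'' Lagrangians, i.e. their intersection is zero, by a dimension/orthogonality argument: if $x \in \alpha_\l(1_\l) \cap \alpha_\l(\chi)$ were nonzero, pairing considerations combined with $\dimtwo \alpha_\l(1_\l) = \dimtwo \alpha_\l(\chi) = i$ and $\dimtwo H^1(K_\l, E[2]) = 2i$ (the latter from the local Euler characteristic formula at a place not dividing $2$, together with $\dimtwo E(K_\l)[2] = i$) would force a contradiction. Alternatively, and perhaps more cleanly, I would invoke Lemma \ref{hvkramer}: $h_\l(\chi) = \dimtwo(E(K_\l)/\N E(L))$ for $L/K_\l$ the ramified quadratic extension, and this norm index can be computed directly — for an elliptic curve with good reduction over a local field of odd residue characteristic, base-changed along a ramified quadratic extension, the cokernel of the norm map has dimension exactly $\dimtwo E(K_\l)[2]$; this is a standard local computation (the formal group contributes nothing since $\l \nmid 2$, and on the component groups / reduction one gets precisely the $2$-torsion contribution). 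That gives $h_\l(\chi) = i$ directly, and then $\dimtwo(\alpha_\l(1_\l) \cap \alpha_\l(\chi)) = \dimtwo\alpha_\l(1_\l) - h_\l(\chi) = i - i = 0$ follows by reversing the earlier deduction.

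I expect the main obstacle to be pinning down the local norm-index computation $\dimtwo(E(K_\l)/\N E(L)) = i$ cleanly, or equivalently verifying the transversality of the two Lagrangians. One must be careful that $\chi$ ramified (not merely nontrivial) is genuinely used: an unramified $\chi$ would give $h_\l(\chi) = 0$ by Lemma \ref{localzero}, so the ramification is what makes $L/K_\l$ ramified and forces the norm map to miss the full group. The good-reduction hypothesis ($\l \notin \Sigma$) and $\l \nmid 2$ are what let me split off the formal group and reduce to a statement about $E$ over residue fields and their ramified quadratic extensions. Modulo citing the appropriate local-fields lemma (in the spirit of \cite[Corollary 4.4]{norm} used in Lemma \ref{localzero}, but now in the ramified case), the argument is short. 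I would write it up by: (1) handling $i=0$ trivially; (2) for $i \ge 1$, applying Lemma \ref{hvkramer} with $L$ the ramified quadratic extension cut out by $\chi$; (3) computing $\dimtwo(E(K_\l)/\N E(L)) = \dimtwo E(K_\l)[2] = i$ via the standard local analysis; (4) concluding $h_\l(\chi) = i$ and hence, since $\dimtwo\alpha_\l(1_\l) = i$ by Remark \ref{localdim} and $h_\l(\chi) = \dimtwo(\alpha_\l(1_\l)/(\alpha_\l(1_\l)\cap\alpha_\l(\chi)))$, that $\alpha_\l(1_\l)\cap\alpha_\l(\chi) = \{0\}$.
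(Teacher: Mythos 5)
Your proposal is correct, but it takes a different route from the paper: the paper disposes of this lemma by citing \cite[Lemma 2.11]{MR}, whose proof works directly with the Kummer images (identifying $\alpha_\l(1_\l)$ with the unramified subgroup and analyzing the image for the ramified twist), whereas you derive everything from Lemma \ref{hvkramer} plus a norm-cokernel computation. Your logical order is sound: since $h_\l(\chi)=\dimtwo(\alpha_\l(1_\l)/(\alpha_\l(1_\l)\cap\alpha_\l(\chi)))$ by definition and $\dimtwo\alpha_\l(1_\l)=i$ by Remark \ref{localdim}, proving $h_\l(\chi)=i$ does force $\alpha_\l(1_\l)\cap\alpha_\l(\chi)=\{0\}$, so establishing the norm index $\dimtwo(E(K_\l)/\N E(L))=i$ suffices for both assertions at once. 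That index computation is indeed standard and correct in this setting: for the ramified quadratic $L/K_\l$ with $\l\nmid 2$ and good reduction, the norm is surjective on the formal group (for $x\in\hat E(\mathfrak m_{K_\l})$ one has $\N x=[2]x$ and $[2]$ is invertible on a pro-$p$ group with $p$ odd), the residue field does not change, Galois acts trivially on it, so the induced map on $\tilde E(k)$ is multiplication by $2$ and the cokernel is $\tilde E(k)/2\tilde E(k)\cong\tilde E(k)[2]\cong E(K_\l)[2]$; one should either spell this out or cite it (it is in the spirit of Kramer's norm-index results, parallel to the citation of \cite[Corollary 4.4]{norm} in Lemma \ref{localzero}). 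One caution: your first sketched alternative --- transversality of the two maximal isotropic subspaces by ``dimension/orthogonality considerations'' --- does not work as stated, since two Lagrangians of $H^1(K_\l,E[2])$ of the same dimension can perfectly well meet nontrivially; but since your actual write-up plan abandons that in favor of the Kramer route, this is not a gap in the proof you propose. The trade-off: your argument is self-contained within the paper's toolkit, at the cost of reproving a local norm computation that the paper outsources to the literature.
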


\begin{proof}
See \cite[Lemma 2.11]{MR}. 
\end{proof}

\begin{thm}
Let $T$ be a finite set of places of $K$. Let $v_1, \cdots, v_k \not\in T$ be places and $\psi_{v_j} \in \Xset(K_{v_j})$.
\label{ptd}
The images of right hand restriction maps of the following exact sequences are orthogonal complements with respect to the pairing given by the sum of pairings \eqref{tld} of the places $v\in T$
$$
\xymatrix@R=3pt@C=7pt{
0 \ar[r] & \Sel_2(E, \psi_{v_1}, \cdots, \psi_{v_k}) \ar[r] & \Sel_2^T(E, \psi_{v_1}, \cdots, \psi_{v_k}) \ar[r]
    & \bigoplus_{v \in T} H^1(K_v,E[2])/\alpha_v(1_v),  \\
0 \ar[r] & \Sel_{2, T}(E, \psi_{v_1}, \cdots, \psi_{v_k}) \ar[r] & \Sel_2(E, \psi_{v_1}, \cdots, \psi_{v_k}) \ar[r] & \bigoplus_{v \in T} \alpha_v(1_v).
}
$$
In particular, 
\begin{align*}
\dimtwo(\Sel_2^T(E, \psi_{v_1}, \cdots,& \psi_{v_k})) - \dimtwo(\Sel_{2, T}(E, \psi_{v_1}, \cdots, \psi_{v_k}))\\ & = \Sigma_{v \in T} \dimtwo(\alpha_v(1_v))
= \Sigma_{v \in T} \frac{1}{2}\dimtwo(H^1(K_v, E[2])).
\end{align*}
\end{thm}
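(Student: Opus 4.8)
The plan is to deduce everything from two standard inputs: that each local condition $\alpha_v(\chi)\subset H^1(K_v,E[2])$ is a maximal isotropic subspace for the Tate pairing $\langle\ ,\ \rangle_v$ of Theorem~\ref{tlp} (so $\alpha_v(\chi)^\perp=\alpha_v(\chi)$ and $\dimtwo\alpha_v(\chi)=\tfrac12\dimtwo H^1(K_v,E[2])$), and the Greenberg--Wiles formula from global Poitou--Tate duality. Writing $\mathcal{L}=\{\mathcal{L}_v\}$ for the Selmer structure with $\mathcal{L}_v=\alpha_v(1_v)$ for $v\notin S$ and $\mathcal{L}_{v_j}=\alpha_{v_j}(\psi_{v_j})$, whose Selmer group is $\Sel_2(E,\psi_{v_1},\dots,\psi_{v_k})$, the first input says exactly that $\mathcal{L}$ is self-dual once $E[2]$ is identified with its own Cartier dual via the Weil pairing. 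So I would start by recording these two facts, citing a standard reference for self-duality of the Kummer image under Tate local duality and for the Greenberg--Wiles formula.

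Next I would establish the displayed ``in particular'' identity, because the orthogonal-complement assertion follows from it formally. First, the two sequences in the statement are exact by unwinding the definitions: the kernel of $\Sel_2^T(E,\psi_\bullet)\to\bigoplus_{v\in T}H^1(K_v,E[2])/\alpha_v(1_v)$ is $\Sel_2(E,\psi_\bullet)$, since it re-imposes the condition $\alpha_v(1_v)$ at each $v\in T$, and the kernel of $\Sel_2(E,\psi_\bullet)\to\bigoplus_{v\in T}\alpha_v(1_v)$ is $\Sel_{2,T}(E,\psi_\bullet)$. Hence, denoting by $W$ and $V$ the two images, $\dimtwo W+\dimtwo V=\dimtwo\Sel_2^T-\dimtwo\Sel_{2,T}$. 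To evaluate the right-hand side I would apply the Greenberg--Wiles formula twice: once to $\mathcal{L}$ itself, where self-duality together with $E[2]\cong E[2]^{\ast}$ forces the left-hand side of the formula to equal $1$, hence $\prod_v\#\mathcal{L}_v/\#H^0(K_v,E[2])=1$; and once to the relaxed structure $\mathcal{L}^T$ with $\mathcal{L}^T_v=H^1(K_v,E[2])$ for $v\in T$ and $\mathcal{L}^T_v=\mathcal{L}_v$ otherwise, whose dual is the strict structure cutting out $\Sel_{2,T}$. Dividing these two instances, all local factors off $T$ cancel, and for $v\in T$ one is left with $\#H^1(K_v,E[2])/\#\alpha_v(1_v)=\#\alpha_v(1_v)$ by maximal isotropy (using $\mathcal{L}_v=\alpha_v(1_v)$ as $v\notin S$), so $\#\Sel_2^T/\#\Sel_{2,T}=\prod_{v\in T}\#\alpha_v(1_v)$. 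This is precisely the claimed dimension identity, the reformulation with $\tfrac12\dimtwo H^1(K_v,E[2])$ being maximal isotropy once more.

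Finally I would check orthogonality. Since each $\alpha_v(1_v)$ is isotropic, $\langle\ ,\ \rangle_v$ descends to a pairing $\alpha_v(1_v)\times H^1(K_v,E[2])/\alpha_v(1_v)\to\Ftwo$, which is perfect because $\alpha_v(1_v)^\perp=\alpha_v(1_v)$; summing over $v\in T$ gives a perfect pairing between $\bigoplus_{v\in T}\alpha_v(1_v)\supseteq V$ and $\bigoplus_{v\in T}H^1(K_v,E[2])/\alpha_v(1_v)\supseteq W$. For $x\in\Sel_2^T$ and $y\in\Sel_2$, global reciprocity --- the vanishing of the sum of local invariants of the global class $x\cup y\in H^2(K,\mu_2)$ --- gives $\sum_v\langle\res_v x,\res_v y\rangle_v=0$, and every term with $v\notin T$ vanishes because $\res_v x$ and $\res_v y$ then both lie in the isotropic space $\mathcal{L}_v$ (equal to $\alpha_v(1_v)$ if $v\notin S$, to $\alpha_{v_j}(\psi_{v_j})$ if $v=v_j$), leaving $\sum_{v\in T}\langle\res_v x,\res_v y\rangle_v=0$, i.e.\ $W\subseteq V^\perp$. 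Combined with $\dimtwo V+\dimtwo W=\sum_{v\in T}\dimtwo\alpha_v(1_v)=\dimtwo\bigl(\bigoplus_{v\in T}\alpha_v(1_v)\bigr)$ and perfectness of the pairing, this forces $\dimtwo V^\perp=\dimtwo W$, hence $W=V^\perp$ and, symmetrically, $V=W^\perp$. The only delicate point I anticipate is the bookkeeping in the Poitou--Tate formula at the archimedean places and at $v\mid 2$; this is exactly what gets sidestepped by applying Greenberg--Wiles twice and dividing, so that the ``universal'' local factors cancel and only the places of $T$ survive. One also needs to invoke correctly the fact that the image of the Kummer map is its own orthogonal complement under Tate local duality.
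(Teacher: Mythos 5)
Your argument is correct. The paper itself gives no argument at this point: it simply invokes global Poitou--Tate duality in the packaged form of \cite[Theorem 2.3.4]{kolyvagin}, applied to the self-dual Selmer structure whose local conditions are $\alpha_v(1_v)$ (resp.\ $\alpha_{v_j}(\psi_{v_j})$), together with its relaxed-at-$T$ and strict-at-$T$ modifications. What you have written is, in effect, a proof of that black box in the special case at hand: the two applications of the Greenberg--Wiles formula (using that each $\alpha_v(\chi)$ is maximal isotropic for the pairing of Theorem \ref{tlp}, so the structure is self-dual and the local factors off $T$ cancel) give the dimension identity $\dimtwo(\Sel_2^T)-\dimtwo(\Sel_{2,T})=\sum_{v\in T}\dimtwo(\alpha_v(1_v))$; the reciprocity law for $x\cup y\in H^2(K,\bmu_2)$ plus isotropy of the common local conditions off $T$ gives the inclusion of one image in the orthogonal complement of the other; and the dimension count upgrades the inclusion to equality. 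This is the standard route by which the cited duality statement is itself established, so the content is the same, but your version has the merit of making explicit exactly which inputs are used (maximal isotropy of the Kummer images, including at real and complex places, and self-duality of $E[2]$ via the Weil pairing), at the cost of re-deriving a result the paper is content to quote. The two points you flag as delicate --- the archimedean bookkeeping and the self-annihilation of the Kummer image --- are indeed the only places where care is needed, and both are standard and handled correctly by your cancellation trick.
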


\begin{proof}
The lemma follows from the Global Poitou-Tate Duality. For example, see \cite[Theorem 2.3.4]{kolyvagin}.
\end{proof}

\begin{cor}
\label{ressur}
Suppose $T= \{\l_1, \cdots, \l_n \}$, where  $ \l_i \in \cP$. Let $\psi_i \in \Xset_{\ram}(K_{\l_i})$. Let $v_0 \not\in T$ be a place and $\psi_{v_0} \in \Xset(K_{v_0})$. Suppose that the map $\res_T : \Sel_2(E, \psi_{v_0}) \to \bigoplus_{v \in T} \alpha_v(1_v)$ is surjective. Then we have 
\begin{enumerate}
\item
$\Sel_2(E, \psi_{v_0})  = \Sel_2^T(E, \psi_{v_0}), \text{ and }$
\item
$\Sel_2(E, \psi_1, \cdots, \psi_n, \psi_{v_0}) = \Sel_{2, T}(E, \psi_{v_0}).$
\end{enumerate}
\end{cor}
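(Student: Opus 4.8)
The plan is to obtain both equalities directly from the two global Poitou--Tate exact sequences of Theorem~\ref{ptd}, applied with the finite set $T$ and the single distinguished place $v_0$ (so that $\{v_1,\cdots,v_k\}=\{v_0\}$ there), together with Lemma~\ref{ramhv}.

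For (i), the inclusion $\Sel_2(E,\psi_{v_0})\subset\Sel_2^T(E,\psi_{v_0})$ is immediate from Definition~\ref{variousselmer}, since passing from the former to the latter only drops the local conditions at the places of $T$. For the reverse inclusion I would compare dimensions. The ``in particular'' clause of Theorem~\ref{ptd} gives $\dimtwo\Sel_2^T(E,\psi_{v_0})-\dimtwo\Sel_{2,T}(E,\psi_{v_0})=\sum_{v\in T}\dimtwo\alpha_v(1_v)$, while exactness at $\Sel_2(E,\psi_{v_0})$ in the lower sequence of Theorem~\ref{ptd} identifies $\Sel_{2,T}(E,\psi_{v_0})$ with the kernel of $\res_T\colon\Sel_2(E,\psi_{v_0})\to\bigoplus_{v\in T}\alpha_v(1_v)$, so $\dimtwo\Sel_2(E,\psi_{v_0})-\dimtwo\Sel_{2,T}(E,\psi_{v_0})$ equals the dimension of the image of $\res_T$; by the surjectivity hypothesis this image is all of $\bigoplus_{v\in T}\alpha_v(1_v)$, of dimension $\sum_{v\in T}\dimtwo\alpha_v(1_v)$. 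Subtracting, $\dimtwo\Sel_2^T(E,\psi_{v_0})=\dimtwo\Sel_2(E,\psi_{v_0})$, which together with the inclusion forces equality. (Equivalently, one can invoke the orthogonal-complement assertion of Theorem~\ref{ptd}: if $\res_T$ surjects onto $\bigoplus_{v\in T}\alpha_v(1_v)$, then the image of $\Sel_2^T(E,\psi_{v_0})$ in $\bigoplus_{v\in T}H^1(K_v,E[2])/\alpha_v(1_v)$, being the orthogonal complement of the former, must be $0$.)

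For (ii), one inclusion is formal: if $x\in\Sel_{2,T}(E,\psi_{v_0})$ then $\res_{\l_i}(x)=0\in\alpha_{\l_i}(\psi_i)$ for every $i$, so $x$ satisfies all the defining conditions of $\Sel_2(E,\psi_1,\cdots,\psi_n,\psi_{v_0})$. Conversely, given $x\in\Sel_2(E,\psi_1,\cdots,\psi_n,\psi_{v_0})$, I would observe that forgetting the conditions at $\l_1,\cdots,\l_n$ exhibits $x$ as an element of $\Sel_2^T(E,\psi_{v_0})$ (this uses $\{v_0\}\cup T=\{v_0,\l_1,\cdots,\l_n\}$ and $v_0\notin T$ to match up which condition is imposed where), hence $x\in\Sel_2(E,\psi_{v_0})$ by part~(i); in particular $\res_{\l_i}(x)\in\alpha_{\l_i}(1_{\l_i})$ for each $i$. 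Since also $\res_{\l_i}(x)\in\alpha_{\l_i}(\psi_i)$ with $\psi_i\in\Xset_\ram(K_{\l_i})$ and $\l_i\in\cP$, Lemma~\ref{ramhv} yields $\alpha_{\l_i}(1_{\l_i})\cap\alpha_{\l_i}(\psi_i)=\{0\}$, so $\res_{\l_i}(x)=0$ for all $i$; thus $\res_T(x)=0$ and $x\in\Sel_{2,T}(E,\psi_{v_0})$, completing (ii).

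I do not expect a real obstacle here: the corollary is bookkeeping with the Poitou--Tate sequences plus Lemma~\ref{ramhv}. The step needing the most care is the dimension count in (i) --- checking that the surjectivity hypothesis pins the image of $\res_T$ down to all of $\bigoplus_{v\in T}\alpha_v(1_v)$, whose dimension exactly cancels the term $\sum_{v\in T}\dimtwo\alpha_v(1_v)$ coming from the ``in particular'' clause --- and, in (ii), keeping straight the indexing of local conditions when identifying $\Sel_2(E,\psi_1,\cdots,\psi_n,\psi_{v_0})$ as a subgroup of $\Sel_2^T(E,\psi_{v_0})$.
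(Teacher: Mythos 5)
Your proof is correct and follows essentially the paper's route: part (i) comes from Theorem \ref{ptd} (your dimension count is just an unwound version of the orthogonality argument, which you also note parenthetically and which is exactly what the paper uses), and part (ii) is the same intersection argument via Lemma \ref{ramhv} combined with (i).
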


\begin{proof}
The first assertion is clear because the orthogonality in Theorem \ref{ptd}  shows that the image of 
$$
\res_T : \Sel_2^T(E, \psi_{v_0}) \to \bigoplus_{v \in T} H^1(K_v,E[2])/\alpha_v(1_v)
$$
is trivial. Lemma \ref{ramhv} shows that
$$
\Sel_2(E, \psi_{v_0}) \cap \Sel_2(E, \psi_1, \cdots, \psi_n, \psi_{v_0}) = \Sel_{2, T}(E, \psi_{v_0}), 
$$
where the intersection is taken in $H^1(K, E[2])$. Now the second assertion is easy to see.
\end{proof}

\begin{cor}
\label{babo}
Let $\l$ be a place and let $v_1, \cdots, v_k$ be places of $K$ not equal to $\l$. Let $\psi_{v_j} \in \Xset(K_{v_j})$. For any $\phi_{\l}, \eta_{\l} \in \Xset(K_\l)$, we have $$|r_2(E, \psi_{v_1}, \cdots, \psi_{v_k}, \phi_{\l}) - r_2(E, \psi_{v_1}, \cdots, \psi_{v_k}, \eta_{\l})| \le \dimtwo(\alpha_\l(1_\l)).$$
\end{cor}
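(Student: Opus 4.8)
The plan is to view the two Selmer groups in the statement as preimages of local conditions at $\l$ inside one and the same ``relaxed'' group, and then to bound their dimensions by a dimension count. First I would record the (standard) fact that $\dimtwo(\alpha_\l(\chi))$ does not depend on $\chi\in\Xset(K_\l)$: by Definition~\ref{localcondition} each $\alpha_\l(\chi)$ is a subspace of the fixed group $H^1(K_\l,E[2])$ (via the $G_{K_\l}$-equivariant isomorphism $E^\chi[2]\cong E[2]$), and by Theorem~\ref{tlp} it is a maximal isotropic subspace for the Tate--Weil pairing when $\l$ is not complex, and is $0$ when $\l$ is complex; in either case its dimension equals $\tfrac12\dimtwo H^1(K_\l,E[2])$, which is visibly independent of $\chi$. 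Write $d:=\dimtwo(\alpha_\l(1_\l))$ for this common value.

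Next I would introduce $M:=\Sel_2^{\l}(E,\psi_{v_1},\dots,\psi_{v_k})$ from Definition~\ref{variousselmer}; this is a finite $\F_2$-vector space which imposes exactly the same local conditions as the groups in the statement at every place other than $\l$, and \emph{no} condition at $\l$. Directly from the definitions, for every $\chi\in\Xset(K_\l)$ one has
$$
\Sel_2(E,\psi_{v_1},\dots,\psi_{v_k},\chi)=\{x\in M:\res_\l(x)\in\alpha_\l(\chi)\}.
$$
Let $I:=\res_\l(M)\subseteq H^1(K_\l,E[2])$ and $N:=\ker(\res_\l|_{M})$, so that $\dimtwo M=\dimtwo N+\dimtwo I$. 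The right-hand side above is the preimage under $\res_\l|_{M}$ of $\alpha_\l(\chi)\cap I$, so
$$
r_2(E,\psi_{v_1},\dots,\psi_{v_k},\chi)=\dimtwo N+\dimtwo\bigl(\alpha_\l(\chi)\cap I\bigr).
$$

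Finally, applying this identity with $\chi=\phi_\l$ and with $\chi=\eta_\l$ and subtracting makes the $\dimtwo N$ term cancel, leaving
$$
r_2(E,\psi_{v_1},\dots,\psi_{v_k},\phi_\l)-r_2(E,\psi_{v_1},\dots,\psi_{v_k},\eta_\l)=\dimtwo\bigl(\alpha_\l(\phi_\l)\cap I\bigr)-\dimtwo\bigl(\alpha_\l(\eta_\l)\cap I\bigr).
$$
Since $0\le\dimtwo(\alpha_\l(\chi)\cap I)\le\dimtwo\alpha_\l(\chi)=d$ for $\chi\in\{\phi_\l,\eta_\l\}$, the right-hand side has absolute value at most $d=\dimtwo(\alpha_\l(1_\l))$, which is the assertion. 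I do not anticipate a real obstacle here: once the two Selmer groups are recognized as preimages inside the single relaxed group $M$, the whole thing reduces to elementary linear algebra. The only step deserving a line of justification is the equality $\dimtwo(\alpha_\l(\chi))=\dimtwo(\alpha_\l(1_\l))$ --- in particular at a real place $\l$ --- for which the isomorphism $E^\chi[2]\cong E[2]$ of $G_{K_\l}$-modules together with the maximal isotropy furnished by Theorem~\ref{tlp} suffices.
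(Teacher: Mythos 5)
Your proof is correct, but it runs on different fuel than the paper's. The paper's proof is a one-liner given Theorem \ref{ptd}: taking $T=\{\l\}$ there, both $\Sel_2(E,\psi_{v_1},\dots,\psi_{v_k},\phi_\l)$ and $\Sel_2(E,\psi_{v_1},\dots,\psi_{v_k},\eta_\l)$ are sandwiched between $\Sel_{2,\l}(E,\psi_{v_1},\dots,\psi_{v_k})$ and $\Sel_2^\l(E,\psi_{v_1},\dots,\psi_{v_k})$, and global Poitou--Tate duality says the gap between the relaxed and strict groups is exactly $\dimtwo(\alpha_\l(1_\l))$, so the two ranks differ by at most that. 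You also pivot around the relaxed group $M=\Sel_2^\l(E,\psi_{v_1},\dots,\psi_{v_k})$, but you never invoke global duality: you compute each rank as $\dimtwo\ker(\res_\l|_M)+\dimtwo(\alpha_\l(\chi)\cap\res_\l(M))$ and conclude by pure linear algebra, the only nontrivial input being the purely local fact that $\dimtwo(\alpha_\l(\chi))$ is independent of $\chi$ (indeed equal to $\tfrac12\dimtwo H^1(K_\l,E[2])$). This buys a more elementary and in fact sharper statement (an exact formula for the difference as a difference of intersection dimensions), at the cost of needing that local constancy; the paper's route gets the bound instantly from machinery it has already set up. One small caveat: Theorem \ref{tlp} as stated only gives nondegeneracy of the local pairing, so the maximal isotropy of the Kummer image that you cite is a standard but separate consequence of Tate local duality; alternatively you could obtain $\dimtwo(\alpha_\l(\chi))=\tfrac12\dimtwo H^1(K_\l,E[2])$ by applying the half-dimension formula in Theorem \ref{ptd} to the twisted curve $E^\chi$, so this is a matter of citation rather than a gap.
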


\begin{proof}
In Theorem \ref{ptd}, take $T = \{\l\}.$ Note that $\Sel_2(\psi_{v_1}, \cdots, \psi_{v_k}, \phi_{\l})$ and  $\Sel_2(\psi_{v_1}, \cdots, \psi_{v_k}, \eta_{\l})$ contains $\Sel_{2, \l}(\psi_{v_1}, \cdots, \psi_{v_k}) $ and are contained in $\Sel_{2}^\l(\psi_{v_1}, \cdots, \psi_{v_k})$, where the result easily follows from Theorem \ref{ptd}.
\end{proof}

\section{Increasing $2$-Selmer rank by twisting}
Let $E$ be an elliptic curve over a number field $K$ and let $\Sigma$ be as in previous section. 
\begin{lem}
\label{4torsion}
Let $\l$ be a prime of $K$ such that $\l \nmid 2$. Then
\begin{enumerate}
\item
if all the points of $E[4]$ are $K_\l$-rational and $\chi$ is a nontrivial quadratic character, then
$E^\chi(K_\l)[4] = E^\chi(K_\l)[2] \cong (\Z/2\Z)^2$;
\item
if $E(K_\l)[4] = E(K_\l)[2]$, then the map $E(K_\l)[2] \to E(K_\l)/2E(K_\l)$ via the projection is an isomorphism. 
\end{enumerate}
\end{lem}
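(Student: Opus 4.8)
The plan is to treat the two parts essentially independently, since (i) concerns the $2$-torsion of a twist and (ii) is a purely local statement about the Kummer map.

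For part (i), I would start from the fact that the mod-$2$ Galois representation factors the same way for $E$ and $E^\chi$, namely there is a canonical $G_{K_\l}$-isomorphism $E[2] \cong E^\chi[2]$; since all of $E[4]$ is $K_\l$-rational, in particular $E[2] \subset E(K_\l)$, so $E^\chi[2] \subset E^\chi(K_\l)$, giving $E^\chi(K_\l)[2] \cong (\Z/2\Z)^2$. The content is to rule out a rational point of exact order $4$ on $E^\chi$. Suppose $Q \in E^\chi(K_\l)$ has order $4$. Pass to the field $L = K_\l(\sqrt{d})$ over which $E^\chi$ becomes isomorphic to $E$; the twisting isomorphism $\phi: E^\chi_{L} \isom E_{L}$ satisfies $\sigma\phi = -\phi$ for $\sigma$ the nontrivial element of $\Gal(L/K_\l)$. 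Then $\phi(Q) \in E(L)$ has order $4$, and $E(L)[4] = E(K_\l)[4] = E[4]$ is fixed by $\sigma$, so $\sigma(\phi(Q)) = \phi(Q)$; but also $\sigma(\phi(Q)) = (\sigma\phi)(\sigma Q) = -\phi(\sigma Q) = -\phi(Q)$ since $Q$ is $K_\l$-rational. Hence $2\phi(Q) = 0$, contradicting that $\phi(Q)$ has order $4$. Therefore $E^\chi(K_\l)[4] = E^\chi(K_\l)[2]$.

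For part (ii), I want to show the composite $E(K_\l)[2] \hookrightarrow E(K_\l) \twoheadrightarrow E(K_\l)/2E(K_\l)$ is an isomorphism. Injectivity says $E(K_\l)[2] \cap 2E(K_\l) = 0$, which is exactly the hypothesis $E(K_\l)[4] = E(K_\l)[2]$: if $P \in E(K_\l)[2]$ and $P = 2R$ with $R \in E(K_\l)$, then $R \in E(K_\l)[4] = E(K_\l)[2]$, so $P = 2R = 0$. For surjectivity I would compare $\Ftwo$-dimensions: since $\l \nmid 2$, the group $E(K_\l)$ is, up to finite index prime to $2$, its pro-$2$ part which is finite (the formal group contributes no $2$-torsion and $E(K_\l)/E_1(K_\l)$ is finite), so $\dimtwo(E(K_\l)/2E(K_\l)) = \dimtwo(E(K_\l)[2])$ — this is precisely the equality recorded in Remark \ref{localdim} (via \cite[Lemma 2.11(i)]{mj}). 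An injective map between finite-dimensional $\Ftwo$-vector spaces of equal dimension is an isomorphism, so we are done.

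The main obstacle is the sign bookkeeping in part (i): one has to be careful that the twisting isomorphism $\phi$ is only defined over $L$ and transforms under $\Gal(L/K_\l)$ by $-1$, and that this interacts correctly with the Galois action on the order-$4$ point. Once that is set up cleanly the contradiction is immediate; part (ii) is then formal given Remark \ref{localdim}. I would also remark that the nontriviality of $\chi$ in (i) is used only to guarantee that $\phi$ is genuinely not defined over $K_\l$, i.e.\ that $L \neq K_\l$, so that the relation $\sigma\phi = -\phi$ carries information.
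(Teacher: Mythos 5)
Your proposal is correct and follows essentially the same route as the paper: part (i) is the definitional quadratic-twist computation (a point of $E^\chi(K_\l)$ corresponds under the twisting isomorphism to a point of $E(L)$ on which $\sigma$ acts by $-1$, while $E[4]\subset E(K_\l)$ forces $\sigma$ to act trivially), which the paper simply declares obvious, and part (ii) rests on the same fact that for $\l\nmid 2$ the formal group is uniquely $2$-divisible, so $\dimtwo(E(K_\l)[2])=\dimtwo(E(K_\l)/2E(K_\l))$. One small caution: Remark \ref{localdim} is stated only for $\l\in\cP$ (so in particular good reduction), whereas the lemma allows any $\l\nmid 2$; for such general $\l$ you should lean on the formal-group argument you sketch (as the paper does, by noting that only the finite pro-$2$ part of $E(K_\l)$ contributes to $E(K_\l)/2E(K_\l)$) rather than on the remark itself.
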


\begin{proof}
The first assertion (i) is obvious from the definition of quadratic twists. For (ii), multiplication by $2$ is surjective on the pro-(prime to $2$) part of $E(K_{\l})$, so only the pro-$2$ part $E(K_\l)[2^\infty]$ contributes to $E(K_{\l})/2E(K_{\l})$, hence $$E(K_\l)[2] =  E(K_{\l})[2^\infty]/2E(K_{\l})[2^\infty] \cong E(K_{\l})/2E(K_{\l}). $$  
\end{proof}

The following generalizes methods that are used in the proof of Proposition 5.1 in \cite{MR}.
\begin{thm}
\label{inc2}
Let $E$ be an elliptic curve over a number field $K$. Then there exist infinitely many $\chi \in \Xset(K)$ such that $r_2(E^\chi) = r_2(E) +2$.
\end{thm}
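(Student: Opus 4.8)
The plan is to increase the $2$-Selmer rank by exactly $2$ using a single well-chosen auxiliary prime $\l \in \cP_2$, together with the comparison machinery of Section 2. First I would recall that, by Theorem \ref{parity}, twisting by a character ramified at a prime in $\cP_2$ changes the parity count by $h_\l(\chi)=2$ (Lemma \ref{ramhv}), which is even, so such a twist keeps the $2$-Selmer rank in the same parity class as $r_2(E)$; this is consistent with wanting a $+2$ jump. The strategy is then: (a) produce a prime $\l \notin \Sigma$ with $\l \in \cP_2$ (so $\dimtwo(\alpha_\l(1_\l))=2$ by Remark \ref{localdim}) such that the restriction map $\res_\l : \Sel_2(E) \to H^1(K_\l,E[2])$ is zero — equivalently, $\Sel_2(E)=\Sel_{2,\l}(E)$; (b) for a ramified $\psi_\l \in \Xset_\ram(K_\l)$, use Lemma \ref{ramhv} ($\alpha_\l(1_\l)\cap \alpha_\l(\psi_\l)=0$) to compare $\Sel_2(E)$, $\Sel_2^\l(E)$, and $\Sel_2(E,\psi_\l)$; (c) globalize: realize $\psi_\l$ as the localization of a global quadratic character $\chi$ that is trivial (or unramified of the good-reduction type) at every other place, so that by Definition \ref{selmer} $\Sel_2(E^\chi)=\Sel_2(E,\psi_\l)$, and count dimensions.

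For step (b), the Poitou--Tate exact sequences of Theorem \ref{ptd} with $T=\{\l\}$ give
$$
\dimtwo(\Sel_2^\l(E)) - \dimtwo(\Sel_{2,\l}(E)) = \dimtwo(\alpha_\l(1_\l)) = 2.
$$
If $\res_\l$ vanishes on $\Sel_2(E)$ then $\Sel_2(E) = \Sel_{2,\l}(E)$, and moreover the orthogonality statement forces $\Sel_2(E)=\Sel_2^\l(E)$ as well (this is exactly the mechanism of Corollary \ref{ressur}(i) in the degenerate case where the image of $\res_\l$ is already trivial, or one argues directly from Theorem \ref{ptd} that the image of $\res_\l:\Sel_2^\l(E)\to H^1(K_\l,E[2])/\alpha_\l(1_\l)$ is the orthogonal complement of an image that is all of $\alpha_\l(1_\l)$, hence zero). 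But wait — that would give $\dimtwo(\Sel_2^\l(E)) = \dimtwo(\Sel_{2,\l}(E)) = \dimtwo(\Sel_2(E))$, contradicting the displayed jump of $2$ unless something is off; the resolution is that I do \emph{not} want $\res_\l$ to vanish on $\Sel_2^\l(E)$, I want it to vanish on $\Sel_2(E)$, and then $\Sel_2^\l(E)$ is genuinely bigger. Concretely, $\Sel_2(E,\psi_\l)$ sits between $\Sel_{2,\l}(E)$ and $\Sel_2^\l(E)$, and since $\alpha_\l(\psi_\l)\cap\alpha_\l(1_\l)=0$ while $\res_\l(\Sel_2^\l(E))$ has dimension $2 = \dimtwo\alpha_\l(1_\l)$, a dimension chase (using that $\res_\l$ surjects onto the relevant $2$-dimensional space on $\Sel_2^\l(E)$) yields $\dimtwo(\Sel_2(E,\psi_\l)) = \dimtwo(\Sel_{2,\l}(E)) + \dimtwo(\alpha_\l(\psi_\l)\cap \res_\l(\Sel_2^\l(E)))$. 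The arithmetic has to come out to $r_2(E)+2$; I would pin down the intermediate dimensions carefully so that $\dimtwo(\Sel_2(E,\psi_\l)) = \dimtwo\Sel_2(E) + 2$, i.e. the ramified condition at $\l$ adds a full $2$-dimensional worth of new classes while removing none (removing none is guaranteed precisely by $\res_\l|_{\Sel_2(E)}=0$).

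The heart of the argument, and the main obstacle, is step (a): finding infinitely many primes $\l\in\cP_2$ at which $\res_\l$ kills all of $\Sel_2(E)$. This is a Chebotarev-type statement. One takes the finite extension $M = K(E[2])\cdot \overline{K}^{\,\bigcap_{c\in\Sel_2(E)}\ker(c)}$ (or rather the fixed field cut out by all Selmer classes, viewed via Kummer theory inside a suitable field containing the relevant $2$-torsion data), and asks for primes $\l$ whose Frobenius in $\Gal(M/K)$ lies in a specific conjugacy class: it must be trivial on $K(E[2])$ — in fact one wants $E(K_\l)[2]\cong(\Z/2\Z)^2$, so $\l\in\cP_2$, which by Lemma \ref{4torsion}(ii) is cleanest when $E(K_\l)[4]=E(K_\l)[2]$ — and simultaneously split completely with respect to the Kummer fields attached to every element of $\Sel_2(E)$, forcing $\res_\l(c)=0$ for all $c$. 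Such a conjugacy class is nonempty (the element $1$ on the Selmer part and a suitably chosen element of order $2$ on $K(E[4])/K$ controlling the $[4]=[2]$ condition), so Chebotarev density provides infinitely many such $\l$, and choosing distinct $\l$'s gives distinct (ramified at different primes, hence non-isomorphic) twists $E^\chi$. The subtleties I expect to spend the most care on: (1) checking that the linear-algebra conditions "$\res_\l(c)=0$ for all $c\in\Sel_2(E)$" and "$\l\in\cP_2$" are simultaneously satisfiable by a single Frobenius class — this requires the Kummer extensions to be linearly disjoint enough from $K(E[4])$, which uses non-degeneracy of the Weil/Tate pairings (Theorem \ref{tlp}) and the standard fact that Selmer classes are unramified outside $\Sigma$; (2) producing the global character $\chi$ with prescribed ramification only at $\l$ and with $\chi_v$ of the "harmless" type at all $v\in\Sigma$ (trivial, or unramified good-reduction at the right places) so that $h_v(\chi_v)=0$ there by Lemma \ref{localzero} and $\Sel_2(E^\chi)=\Sel_2(E,\psi_\l)$ — this is a weak approximation / ray class group argument, possibly requiring $\l$ to also avoid interfering with the class group, which is why the hypothesis that $\Sigma$ generates the class group is invoked.
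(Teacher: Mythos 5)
Your skeleton matches the paper's (choose $\l$ with $\res_\l(\Sel_2(E))=0$ via Chebotarev, compare local conditions using Theorem \ref{ptd}, Lemma \ref{ramhv} and Corollary \ref{babo}, globalize the ramified character), but there is a genuine gap at the decisive step. After arranging $\res_\l(\Sel_2(E))=0$ you correctly arrive at $r_2(E,\psi_\l)=r_2(E)+\dimtwo\left(\alpha_\l(\psi_\l)\cap\res_\l(\Sel_2^\l(E))\right)$, and parity (Theorem \ref{parity} with $h_\l(\psi_\l)=2$) makes this intersection $0$- or $2$-dimensional; but nothing in your argument forces it to be nonzero. The two spaces $\res_\l(\Sel_2^\l(E))$ and $\alpha_\l(\psi_\l)$ are just two $2$-dimensional subspaces of the $4$-dimensional $H^1(K_\l,E[2])$, each meeting $\alpha_\l(1_\l)$ trivially, and two such subspaces can perfectly well meet in $\{0\}$, in which case $r_2(E^\chi)=r_2(E)$. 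No amount of ``pinning down the intermediate dimensions'' can exclude this: the dimension count is consistent with both outcomes, and this is precisely why the theorem needs an extra input. The paper supplies it as follows: the case $E(K)[2]=0$ is disposed of by citation, and otherwise one fixes a rational point $P\in E(K)[2]$ and requires $\Frob_\l$ to be \emph{trivial} on $K(E[4])$, so that $E[4]\subset E(K_\l)$; then Lemma \ref{4torsion} shows the local Kummer image of $P$ for the twist $E^\chi$ at $\l$ is nonzero, while $P$ also furnishes a global class in $\Sel_2(E^\chi)$. Hence $\res_\l(\Sel_2(E^\chi))\neq 0$, so $\Sel_2(E^\chi)$ properly contains $\Sel_{2,\l}(E)=\Sel_2(E)$, and the jump must be $+2$. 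Your proposal contains no analogue of this mechanism, so it proves only $r_2(E^\chi)\in\{r_2(E),\,r_2(E)+2\}$.

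A secondary point: your Chebotarev condition asks for a Frobenius of order $2$ on $K(E[4])/K$ so that $E(K_\l)[4]=E(K_\l)[2]$ for $E$ itself; this is the wrong object (what matters is the twist $E^\chi$ at $\l$), and the correct requirement is that $\l$ split completely in $K(E[4])$, which is what makes Lemma \ref{4torsion}(i)--(ii) applicable to $E^\chi$. This also dissolves your worry about linear disjointness: in the paper all conditions on $\l$ are ``$\Frob_\l$ trivial'' on a single finite Galois extension $L\supseteq K(E[4])K[\theta]$ that also trivializes the Selmer classes, so the relevant conjugacy class (the identity) is trivially nonempty. Your globalization sketch via a ray class/weak approximation argument is in the right spirit and corresponds to the paper's choice of $d$ with $\l^k=(d)$ and $d\equiv 1\ (\mathrm{mod}\ \theta)$.
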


\begin{proof}
If $\Gal(K(E[2])/K) \cong S_3$ or $A_3$, the result follows from \cite[Proposition 6.6]{mj}. Therefore, from now on, we assume that  $\Gal(K(E[2])/K)$ has order $1$ or $2$, i.e., there exists a non-trivial rational $2$-torsion  point $P \in E(K)[2]$. Let $\theta$ be the formal product of $8$, and all places in $\Sigma$ not dividing $2$. In particular, $\theta$ is divisible by primes where $E$ has bad reduction. Let $K[\theta]$ be the maximal $2$-subextension of $K(\theta)$, where $K(\theta)$ is the ray class field  modulo $\theta$. 

Let $L$ be a Galois extension containing $K(E[4])K[\theta]$ such that the image of the restriction map
$$
\Sel_2(E) \subseteq H^1(K, E[2]) \to H^1(L, E[2]) = \Hom(G_L, E[2])
$$
is trivial. Choose a prime (Chebotarev's density theorem) $\l \notin \Sigma$ so that $\l$ is unramified in $L/K$ and $\Frob_\l|_L = 1$. Note that the restriction map $H^1(K, E[2]) \to H^1(K_\l, E[2])$ factors through the restriction $H^1(K,E[2]) \to H^1(L, E[2])$ because $\l$ splits completely in $L/K$, so $\res_{\l}(\Sel_2(E)) = 0$ and  
$$
\Sel_2(E) = \Sel_{2, \l}(E).
$$ 
Moreover, there exists an odd integer $k$ such that $\l^k = (d)$ for some $d \in K^\times$ such that $d \equiv 1 \text{ } (\text{mod } \theta)$. Note the following properties of the extension $K(\sqrt{d})/K$:
\begin{itemize}
\item
$\q$ is ramified in $K(\sqrt{d})/K$,
\item
If $v \notin \Sigma$ and $v \neq \l$, then $v$ is unramified in $K(\sqrt{d})/K$, and
\item
If $v \in \Sigma$, then $v$ splits in $K(\sqrt{d})/K$.
\end{itemize}
Therefore by Lemma \ref{localzero}, the local conditions of $\Sel_2(E)$ and $\Sel_2(E^d)$ are the same except at $\l$, where two local conditions intersect trivially by Lemma \ref{ramhv}. By Corollary \ref{babo} and the fact that $\Sel_2(E) = \Sel_{2, \l}(E)$, we have $0 \le r_2(E^d) - r_2(E) \le 2$. Moreover since $\l \in \cP_2$, Theorem \ref{parity} and Lemma \ref{ramhv} prove that 
\begin{equation}
\label{twotwo}
r_2(E^d) = r_2(E) \text{ or } r_2(E) + 2.
\end{equation}
By our choice of a prime $\l$, we have $E[4] \subset E(K_\l)$. By Lemma \ref{4torsion}, $P$ has a nonzero local Kummer image for $E^d$ at $\l$. Therefore $\res_{\l}(\Sel_2(E^d)) \neq 0$, where $\res_{\l} : \Sel_2(E^d) \to H^1(K_\l, E[2])$ is the restriction map.
Hence $\Sel(E^d)$ contains $\Sel_2(E) (= \Sel_{2, \l}(E))$ properly, i.e., $r_2(E^d) \geq r_2(E) +1$. Therefore by \eqref{twotwo}, we have $r_2(E^d)= r_2(E) +2$. Since the only constraint on our choice of $\l$ is $\Frob_\l|_L = 1$ and there are infinitely many such primes (Chebotarev's density theorem), we have infinitely many quadratic twists with the desired property. 
\end{proof}

\begin{rem}
A similar argument can show the following theorem: Let $C_f$ be a hyperelliptic curve over a number field $K$ given by an affine model
$$
y^2 = f(x),
$$
where $n:=\deg(f) > 1$ is odd. Let $J$ be the Jacobian of $C_f$. If $K$ contains a root of $f$, then for any given natural number $r$, there exist infinitely many quadratic twists $J^\chi$ such that $\dimtwo(\Sel_2(J^\chi/K)) \ge r$ (In \cite{mj}, the author discusses the cases when $\Gal(f) \cong A_n$ or $S_n$. In such cases, the result is even stronger. See \cite[Theorem 6.7]{mj}). 
\end{rem}

\section{Changing the parity of $2$-Selmer rank by twisting}
Recall that $\Sigma$ is a finite set of places of $K$ containing all places where $E$ has bad reduction, all primes above $2$, and all infinite places. We enlarge $\Sigma$, if necessary, so that $\Pic(O_{K, \Sigma}) = 1$, where $O_{K, \Sigma}$ denote the ring of $\Sigma$-integers. For the rest of the paper, we put $n:=|\Sigma|$. Let $\Delta_E$ denote the discriminant of some model of the elliptic curve $E$.

\begin{lem}
\label{unitdim}
$\dimtwo(O_{K, \Sigma}^\times/(O_{K, \Sigma}^\times)^2) = n.$
\end{lem}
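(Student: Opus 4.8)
The plan is to reduce the statement to the Dirichlet $S$-unit theorem followed by an elementary group-theoretic count. First I would recall that, by hypothesis, $\Sigma$ contains all archimedean places of $K$, so the $\Sigma$-unit group $O_{K,\Sigma}^\times$ is finitely generated; by the $S$-unit theorem its free rank equals $|\Sigma| - 1 = n-1$ and its torsion subgroup is $\mu_K$, the (finite, cyclic) group of roots of unity in $K$. Hence, as abstract abelian groups, $O_{K,\Sigma}^\times \cong \mu_K \oplus \Z^{n-1}$.

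Next I would pass to the quotient by squares. Since $2$ is prime, the decomposition above gives $O_{K,\Sigma}^\times/(O_{K,\Sigma}^\times)^2 \cong (\mu_K/\mu_K^2) \oplus (\Z/2\Z)^{n-1}$. Because $\mathrm{char}(K)=0$ we have $-1 \in \mu_K$, so $|\mu_K|$ is even and the cyclic group $\mu_K/\mu_K^2$ has order $2$. Therefore $\dimtwo(O_{K,\Sigma}^\times/(O_{K,\Sigma}^\times)^2) = 1 + (n-1) = n$, which is the claim.

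There is essentially no obstacle here: the whole content is the $S$-unit theorem, and the normalization $\Pic(O_{K,\Sigma})=1$ imposed at the start of the section is not actually needed for this particular identity (it will be used in the arguments that follow). If one prefers not to invoke the $S$-unit theorem in that form, one can instead combine the classical Dirichlet unit theorem for $O_K^\times$ with the exact sequence
\[
1 \to O_K^\times \to O_{K,\Sigma}^\times \xrightarrow{\ \mathrm{val}\ } \bigoplus_{\p \in \Sigma,\ \p \nmid \infty} \Z \to \Pic(O_K) \to \Pic(O_{K,\Sigma}) \to 1,
\]
but the resulting rank computation comes down to exactly the same count.
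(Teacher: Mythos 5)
Your proof is correct and follows essentially the same route as the paper: both invoke the $S$-unit theorem to get $O_{K,\Sigma}^\times \cong \Z^{n-1}\oplus \Z/m\Z$ with $m$ even (since $-1\in K$), and then count $\Ftwo$-dimensions, giving $1+(n-1)=n$. Your side remark that $\Pic(O_{K,\Sigma})=1$ is not needed for this particular lemma is also accurate.
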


\begin{proof}
It is well-known that $O_{K, \Sigma}^\times \cong \Z^{n -1} \oplus \Z/m\Z$, where $m = \#\{\text{roots of unity in $K$}\}$ is divisible by $2$ (for example, see \cite[Proposition 6.1.1]{neukirch}).
\end{proof}

\begin{lem}
\label{findglobalchar}
Let $\l \notin \Sigma$ (so $\l \nmid 2$) be a prime of $K$ and suppose $g \in \Hom(\O_\l^\times, \{\pm1\})$ is non-trivial. Then $g(b) = \Frob_\l(\sqrt{b})/\sqrt{b}$ for all $b \in \O_{K, \Sigma}^\times$. In particular, if $\psi \in \Xset_{\ram}(K_\l)$, then $\psi(b) = \Frob_\l(\sqrt{b})/\sqrt{b}$ for all $b \in \O_{K, \Sigma}^\times$.
\end{lem}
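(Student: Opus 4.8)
The plan is to translate the statement $g(b) = \Frob_\l(\sqrt b)/\sqrt b$ for $b \in \O_{K,\Sigma}^\times$ into an equality of characters on $K^\times$, and then use that both sides can be checked by local reciprocity at $\l$ together with the triviality of ramification of a suitably chosen global quadratic extension. First I would observe that since $\Pic(\O_{K,\Sigma}) = 1$, the ideal $\l$ is principal; but more importantly I want to produce a global quadratic character whose local component at $\l$ is $g$ (or equals $\psi$) and whose behaviour I can control at all other places. Since $g$ is a nontrivial homomorphism $\O_\l^\times \to \{\pm 1\}$, it factors through $\O_\l^\times/(\O_\l^\times)^2$, which is an $\F_2$-vector space; fixing a nontrivial such $g$ determines (via local class field theory) a ramified quadratic character $\psi_0 \in \Xset_\ram(K_\l)$ restricting to $g$ on units. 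The key point is then to compare $\psi_0$ with the global quadratic extension $K(\sqrt b)/K$ for $b \in \O_{K,\Sigma}^\times$.

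Concretely, the second (``In particular'') clause follows from the first, so I would focus on the first. Fix $\psi \in \Xset_\ram(K_\l)$ restricting to $g$ on $\O_\l^\times$. For $b \in \O_{K,\Sigma}^\times$, the quantity $\Frob_\l(\sqrt b)/\sqrt b$ is exactly the value of the unramified-outside-$\l$-behaviour of the Kummer character attached to $K(\sqrt b)/K$: since $b$ is a $\Sigma$-unit and $\l \notin \Sigma$, the extension $K(\sqrt b)/K$ is unramified at $\l$, so $\Frob_\l$ makes sense in $\Gal(K(\sqrt b)/K)$ and $\Frob_\l(\sqrt b)/\sqrt b \in \{\pm 1\}$ equals the Artin symbol $\left(\tfrac{K(\sqrt b)/K}{\l}\right)$ evaluated on $\sqrt b$. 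By global class field theory this Artin symbol equals the product of local norm-residue symbols $\prod_w (b, K(\sqrt b)/K)_w$; but $K(\sqrt b)/K$ is unramified at every $w \notin \Sigma \cup \{\text{places dividing } b\}$, and $b$ is a unit at all $w \notin \Sigma$, so the only contributions come from $w \in \Sigma$ and $w = \l$. The reciprocity law (product formula for Hilbert symbols) gives $\prod_w (b, b')_w = 1$; taking $b' $ to be a uniformizer situation at $\l$ and unwinding, one obtains that $(b, \l)_\l = \prod_{w \in \Sigma}(b, \cdot)_w$, and the right side is $g(b)$ once one identifies the local symbol at $\l$ with the Hilbert pairing $(\,\cdot\,, \l)_\l$ restricted to units, which is precisely the nontrivial character $g$ (up to the normalization that $g$ is the unique ramified-at-$\l$ quadratic character's restriction to $\O_\l^\times$, matched by local class field theory with $\Frob_\l$ acting on square roots).

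The cleanest route, which I would actually write out, is: for $b \in \O_{K,\Sigma}^\times$, apply the product formula for the quadratic Hilbert symbol to the pair $(b, \pi_\l)$ where $\pi_\l$ is chosen with $v_\l(\pi_\l)$ odd and $\pi_\l$ a $\Sigma$-unit away from $\l$ (possible since $\l^k$ is principal for odd $k$, as used in the proof of Theorem \ref{inc2}); this isolates the value $g(b)$ at $\l$ against the Frobenius data at the split-or-unramified places, collapsing everything outside $\Sigma \cup \{\l\}$ by Lemma \ref{localzero}-type unramifiedness and everything in $\Sigma$ by the normalization built into the setup. The main obstacle is bookkeeping the normalizations: one must pin down that the nontrivial $g \in \Hom(\O_\l^\times, \{\pm1\})$ corresponds under local reciprocity to the character sending a Frobenius lift to $-1$ on $\sqrt b$ exactly when $b$ is a non-square unit, and verify that the contributions at places of $\Sigma$ vanish — which is where the hypothesis that $\Sigma$ generates the class group and the precise choice of $\theta$ (or of the representing element $d$) in the ambient construction are implicitly doing work. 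I expect this identification of local symbols, rather than any deep input, to be the only delicate point; the rest is a direct application of the reciprocity law.
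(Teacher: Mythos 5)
Your proposal overshoots the statement and, in doing so, introduces steps that are actually wrong. The lemma is purely local, and the paper's proof is two lines: since $\l \nmid 2$, we have $\O_\l^\times/(\O_\l^\times)^2 \cong \Z/2\Z$, so $\Hom(\O_\l^\times,\{\pm1\}) \cong \Z/2\Z$ and the nontrivial $g$ is forced to be the quadratic residue character, i.e.\ $g(b)=1$ exactly when $b \in (\O_\l^\times)^2$; and since $b \in \O_{K,\Sigma}^\times$ is an $\l$-unit and $\l$ is odd, $K_\l(\sqrt b)/K_\l$ is unramified, so $b \in (\O_\l^\times)^2$ if and only if $\Frob_\l(\sqrt b)=\sqrt b$. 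No global character, no Artin symbol, no product formula is needed; the ``in particular'' clause is immediate because a ramified $\psi$ restricts nontrivially to $\O_\l^\times$.

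Concretely, three points in your global detour do not hold up. First, the assertion that the Artin symbol of $K(\sqrt b)/K$ at $\l$ ``equals $\prod_w (b, K(\sqrt b)/K)_w$'' is false: by the reciprocity law that product over all places (applied to the global element $b$) is $1$, not $\Frob_\l$; the Frobenius at $\l$ corresponds to an idele supported at $\l$. Second, you claim the contributions at the places of $\Sigma$ vanish ``because $\Sigma$ generates the class group and because of the choice of $\theta$ (or $d$)''--- but no $\theta$ or $d$ appears in this lemma, which concerns an arbitrary $b \in \O_{K,\Sigma}^\times$ and an arbitrary nontrivial $g$; for a general generator of $\l^k$ the Hilbert symbols at $w \in \Sigma$ need not vanish (in the proof of Theorem \ref{inc2} they are killed only because $d$ is chosen with $d \equiv 1 \pmod \theta$, a special arrangement not available here). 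Third, even granting the bookkeeping, your final step identifies ``the local symbol at $\l$ restricted to units'' with $g$ by appealing to $g$ being \emph{the} nontrivial character of $\O_\l^\times/(\O_\l^\times)^2$ matched with Frobenius on square roots --- but that uniqueness statement already proves the lemma outright, so the entire global apparatus is circular scaffolding around the one observation that does the work.
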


\begin{proof}
We have
$$
\Hom(\O_\l^\times, \{\pm1\}) = \Hom(\O_\l^\times/(\O_\l^\times)^2, \{\pm1\}) \cong \Z/2\Z
$$ 
because $\O_\l^\times/(\O_\l^\times)^2 \cong \Z/2\Z.$ Note that $b \in (\O_\l^\times)^2$ if and only if $\Frob_\l(\sqrt{b}) = \sqrt{b}$, where the assertion follows. 
\end{proof}

\begin{lem}
\label{cft}
The image of the restriction map
\begin{align*}
\Xset(K) = \Hom(\iK/K^\times,\{\pm1\}) &= \textstyle\Hom((\prod_{\mu \in \Sigma}K_{\mu}^\times \times \prod_{\nu\notin\Sigma}\O_{\nu}^\times)/\O_{K,\Sigma}^\times,\{\pm1\}) \\
& \too \textstyle{\prod_{\mu \in \Sigma}\Hom(K_{\mu}^\times, \{\pm1\})\times \prod_{\nu \not\in \Sigma}\Hom(\O_{\nu}^\times, \{\pm1\})}
\end{align*}
is the set of all $(((f_{\mu})_{\mu \in \Sigma}), ((g_{\nu})_{\nu \not\in \Sigma}))$ such that $\prod_{\mu \in \Sigma} f_{\mu}(b) \prod_{\nu \not\in \Sigma} g_{\nu}(b) = 1$ for all $b \in \O_{K,\Sigma}^\times$, where $f_{\mu} \in \Hom(K_{\mu}^\times, \{\pm1\})$ and $g_{\nu}\in \Hom(\O_{\nu}^\times, \{\pm1\})$.
\end{lem}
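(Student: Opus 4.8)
The plan is to read this off from global class field theory, using the hypothesis $\Pic(\O_{K,\Sigma})=1$ in an essential way. Since $\{\pm1\}$ is a finite discrete group, the Artin reciprocity map induces a bijection between the continuous characters $\Hom(\iK/K^\times,\{\pm1\})$ and $\Hom(G_K,\{\pm1\})=\Xset(K)$; this is the identification already built into the statement, so it suffices to describe the continuous characters of $\iK/K^\times$ in terms of their local components.

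First I would replace the full idele class group by its ``$\Sigma$-part''. Set $\iK_\Sigma:=\prod_{\mu\in\Sigma}K_\mu^\times\times\prod_{\nu\notin\Sigma}\O_\nu^\times$, an open subgroup of $\iK$. The crucial point is the decomposition $\iK=\iK_\Sigma\cdot K^\times$: given an idele $x=(x_v)$, the fractional $\O_{K,\Sigma}$-ideal $\prod_{\nu\notin\Sigma}\nu^{v_\nu(x_\nu)}$ is principal because $\Pic(\O_{K,\Sigma})=1$, say equal to $a\O_{K,\Sigma}$ with $a\in K^\times$, and then $x/a\in\iK_\Sigma$. Together with the elementary identity $\iK_\Sigma\cap K^\times=\O_{K,\Sigma}^\times$ (an element of $K^\times$ is a $\Sigma$-unit iff it is a unit at every place outside $\Sigma$), this yields a canonical isomorphism $\iK/K^\times\cong\iK_\Sigma/\O_{K,\Sigma}^\times$, with the diagonal copy of $\O_{K,\Sigma}^\times$ sitting inside $\iK_\Sigma$. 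Hence $\Xset(K)$ is identified with the group of continuous characters of $\iK_\Sigma$ that are trivial on this diagonal $\O_{K,\Sigma}^\times$.

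Next I would decompose a character of the product. Any continuous $\chi:\iK_\Sigma\to\{\pm1\}$ is the product of its restrictions $f_\mu:=\chi|_{K_\mu^\times}$ for $\mu\in\Sigma$ and $g_\nu:=\chi|_{\O_\nu^\times}$ for $\nu\notin\Sigma$; by continuity the kernel of $\chi$ contains $\prod_{\nu\notin T}\O_\nu^\times$ for some finite $T$, so all but finitely many $g_\nu$ are trivial, and conversely any such family (with almost all $g_\nu$ trivial) assembles into a continuous character of $\iK_\Sigma$. Under the identifications above, the restriction map of the lemma is exactly $\chi\mapsto((f_\mu)_{\mu\in\Sigma},(g_\nu)_{\nu\notin\Sigma})$; a family therefore lies in its image if and only if the associated character kills every $b\in\O_{K,\Sigma}^\times$, which is precisely the condition $\prod_{\mu\in\Sigma}f_\mu(b)\prod_{\nu\notin\Sigma}g_\nu(b)=1$ for all $b$ (the product is finite, since almost all $g_\nu$ are trivial). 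The converse is immediate: such a family defines a character of $\iK_\Sigma/\O_{K,\Sigma}^\times\cong\iK/K^\times$, hence an element of $\Xset(K)$ restricting to it.

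The routine parts are the decomposition of characters of $\iK_\Sigma$ and the bookkeeping with the continuity/finiteness condition; the only step with real content is $\iK=\iK_\Sigma\cdot K^\times$, which is where the hypothesis $\Pic(\O_{K,\Sigma})=1$ enters, together with applying the reciprocity isomorphism correctly for characters valued in a finite group.
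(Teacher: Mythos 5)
Your argument is correct and follows essentially the same route as the paper, whose proof is just the one-line observation that global class field theory together with $\Pic(\O_{K,\Sigma})=1$ gives the identification $\iK/K^\times\cong\bigl(\prod_{\mu\in\Sigma}K_\mu^\times\times\prod_{\nu\notin\Sigma}\O_\nu^\times\bigr)/\O_{K,\Sigma}^\times$, after which the description of the image is immediate. You have simply written out the details the paper leaves implicit (the decomposition $\iK=\iK_\Sigma\cdot K^\times$ from triviality of the Picard group, $\iK_\Sigma\cap K^\times=\O_{K,\Sigma}^\times$, and the finiteness of the nontrivial local components), all of which are accurate.
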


\begin{proof}
Global Class Field Theory and the condition $\Pic(\O_{K, \Sigma}) = 1$ show the equalities. It is clear that the image is as stated. 
\end{proof}

\begin{prop}
\label{lastcase}
Let $v_0 \in \Sigma$ and $\psi_{v_0} \in \Xset(K_v)$. Suppose that $\psi_{v_0}(\O_{K, \Sigma}^\times) = 1$. Then there exists $\chi \in \Xset(K)$ such that $\Sel_2(E^\chi) = \Sel_2(E, \psi_{v_0})$
\end{prop}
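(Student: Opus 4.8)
The plan is to construct $\chi \in \Xset(K)$ whose local component at $v_0$ agrees with $\psi_{v_0}$ (so that the local condition $\alpha_{v_0}(\chi_{v_0}) = \alpha_{v_0}(\psi_{v_0})$), and whose other local components are chosen so that they contribute no change to the local conditions defining the Selmer group; then $\Sel_2(E^\chi)$ and $\Sel_2(E, \psi_{v_0})$ are cut out by exactly the same local conditions at every place, hence are equal as subspaces of $H^1(K, E[2])$.

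Concretely, I would use the description of the image of global characters in Lemma \ref{cft}. I seek a tuple $(((f_\mu)_{\mu \in \Sigma}), ((g_\nu)_{\nu \notin \Sigma}))$ with $f_{v_0} = \psi_{v_0}$, with $f_\mu$ trivial for all $\mu \in \Sigma$ with $\mu \neq v_0$, and with $g_\nu$ trivial for all but possibly finitely many $\nu \notin \Sigma$. The constraint from Lemma \ref{cft} is that $\prod_{\mu \in \Sigma} f_\mu(b) \prod_{\nu \notin \Sigma} g_\nu(b) = 1$ for all $b \in \O_{K, \Sigma}^\times$, which with my choices reduces to $\psi_{v_0}(b) \prod_{\nu \notin \Sigma} g_\nu(b) = 1$ for all $b \in \O_{K, \Sigma}^\times$. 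Since $\psi_{v_0}(\O_{K, \Sigma}^\times) = 1$ by hypothesis, the constraint becomes simply $\prod_{\nu \notin \Sigma} g_\nu(b) = 1$ for all $b \in \O_{K, \Sigma}^\times$, which is satisfied by taking all $g_\nu$ trivial. So $\chi$ with local components $(\psi_{v_0}, 1, 1, \dots; 1, 1, \dots)$ lies in the image of the restriction map, i.e., comes from a genuine global character $\chi \in \Xset(K)$.

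It remains to check that this $\chi$ does the job. At $v_0$, $\chi_{v_0} = \psi_{v_0}$ by construction, so $\alpha_{v_0}(\chi_{v_0}) = \alpha_{v_0}(\psi_{v_0})$. At every $\mu \in \Sigma \setminus \{v_0\}$, $\chi_\mu$ is trivial, so $\alpha_\mu(\chi_\mu) = \alpha_\mu(1_\mu)$ by Lemma \ref{localzero}. At every $\nu \notin \Sigma$, $\chi_\nu$ is unramified (its restriction to $\O_\nu^\times$ is $g_\nu = 1$) and $E$ has good reduction at $\nu$ with $\nu \nmid \infty$, so again $\alpha_\nu(\chi_\nu) = \alpha_\nu(1_\nu)$ by Lemma \ref{localzero}. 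Comparing with Definition \ref{variousselmer} (taking $S = \{v_0\}$ and $\psi_{v_0}$ as given), the local conditions defining $\Sel_2(E^\chi)$ via Definition \ref{selmer} coincide place-by-place with those defining $\Sel_2(E, \psi_{v_0})$, using the canonical identification $E^\chi[2] \cong E[2]$, whence $\Sel_2(E^\chi) = \Sel_2(E, \psi_{v_0})$.

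I do not anticipate a serious obstacle here: the statement is essentially an exercise in class field theory once Lemma \ref{cft} is in hand, and the hypothesis $\psi_{v_0}(\O_{K, \Sigma}^\times) = 1$ is precisely what is needed to satisfy the product formula constraint while keeping all other components unramified/trivial. The only point requiring a little care is the bookkeeping at the places $\mu \in \Sigma$ with $\mu \mid 2$ or where $E$ has bad reduction: there $\chi_\mu$ being trivial still forces $\alpha_\mu(\chi_\mu) = \alpha_\mu(1_\mu)$ by the first bullet of Lemma \ref{localzero}, so no good-reduction hypothesis is needed at those places. The infinite places are handled the same way.
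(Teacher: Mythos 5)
Your proposal is correct and is essentially the paper's own argument: both choose the tuple $f_{v_0}=\psi_{v_0}$, $f_\mu=1_\mu$ for $\mu\in\Sigma\setminus\{v_0\}$, $g_\nu=1$ for $\nu\notin\Sigma$, invoke Lemma \ref{cft} (with the hypothesis $\psi_{v_0}(\O_{K,\Sigma}^\times)=1$ guaranteeing the product constraint) to produce $\chi$, and then compare local conditions place by place via Lemma \ref{localzero}. No gaps; your remark that $\chi_\nu$ is merely unramified (not necessarily trivial) at $\nu\notin\Sigma$ is exactly the right point of care.
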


\begin{proof}
Put $f_\mu \in \Hom(K_\mu^\times, \{\pm1\})$ for $\mu \in \Sigma$ and $g_\nu \in \Hom(\O_\nu^\times, \{\pm1\})$ for $ \nu \not\in \Sigma$ such that
\begin{itemize}
\item
$f_{v_0} = \psi_{v_0}$,
\item
$f_v = 1_v$ for $v \in \Sigma \backslash \{v_0\}$, and
\item
$g_{\p}$ is trivial for $\p \notin \Sigma$.
\end{itemize} 
By Lemma \ref{cft}, there exists a character $\chi \in \Xset(K)$ such that for $\mu \in \Sigma$ and $\nu \notin \Sigma$, $\chi_\mu = f_\mu$ and $\chi_\nu|_{\O_\nu^\times} = g_\nu$, where $\chi_\mu, \chi_\nu$ are restrictions of $\chi$ to $K_{\mu}^\times, K_{\nu}^\times$ via the local reciprocity maps, respectively. Now one can see the local conditions for $\Sel_2(E^\chi)$ and $\Sel_2(E, \psi_{v_0})$ are the same everywhere by Lemma \ref{localzero}. 
\end{proof}

\begin{lem}
\label{idea}
Let $v_0$ be a place in $\Sigma$ and let $T$ be a (finite) set of primes such that $T \cap \Sigma = \varnothing$. Suppose that $\psi_{v_0} \in \Xset(K_{v_0})$. Then there exist infinitely many primes $\l \notin \Sigma \cup T$ for which there exists a character $\chi \in \Xset(K)$ satisfying the following conditions.
\begin{enumerate}
\item
$\chi_{v_0} = \psi_{v_0}$,
\item
$\chi_{v} = 1_v$ for $v \in \Sigma \backslash \{v_0\}$,
\item
$\chi_{\omega}$ is ramified for $\omega \in T$,
\item
$\chi_{\p}$ is unramified for $\p \notin \Sigma \cup T \cup \{\l\}$,
\item
$\chi_\l$ is ramified, 
\end{enumerate}
where $\chi_{v_0}, \chi_{v}, \chi_{\omega}, \chi_{\p}, \chi_\l$ are restrictions of $\chi$ to $K_{v_0}^\times, K_{v}^\times, K_{\omega}^\times , K_{\p}^\times, K_{\l}^\times$ via the local reciprocity maps, respectively. 
\end{lem}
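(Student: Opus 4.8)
The plan is to construct the desired global character $\chi$ by prescribing its local components at every place and then invoking Lemma \ref{cft} to realize a global character with those local restrictions, the only constraint being the product formula $\prod_{\mu \in \Sigma} f_\mu(b) \prod_{\nu \notin \Sigma} g_\nu(b) = 1$ for all $b \in \O_{K,\Sigma}^\times$. First I would fix, for each $\omega \in T$, a ramified local character $\chi_\omega \in \Xset_\ram(K_\omega)$; by Lemma \ref{findglobalchar} its restriction to $\O_{K,\Sigma}^\times$ is the explicit homomorphism $b \mapsto \Frob_\omega(\sqrt{b})/\sqrt{b}$. Similarly I prescribe $\chi_{v_0} = \psi_{v_0}$ at $v_0$, the trivial character at the other places of $\Sigma$, and the trivial character on $\O_\p^\times$ for all $\p \notin \Sigma \cup T \cup \{\l\}$ (this forces $\chi_\p$ unramified there). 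The product-formula obstruction then reduces to finding one more prime $\l \notin \Sigma \cup T$ and a ramified local character $\chi_\l \in \Xset_\ram(K_\l)$ whose restriction to $\O_{K,\Sigma}^\times$ exactly cancels the contribution $\psi_{v_0}|_{\O_{K,\Sigma}^\times} \cdot \prod_{\omega \in T} (\text{the }\Frob_\omega\text{-character})$, which is some fixed homomorphism $h: \O_{K,\Sigma}^\times \to \{\pm 1\}$.

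The key step is therefore: given a prescribed homomorphism $h \in \Hom(\O_{K,\Sigma}^\times, \{\pm1\})$, produce infinitely many primes $\l$ such that the (unique up to the ramified/unramified dichotomy) nontrivial character of $\O_\l^\times$ restricts to $h$ on $\O_{K,\Sigma}^\times$. By Lemma \ref{findglobalchar}, the nontrivial $g \in \Hom(\O_\l^\times, \{\pm1\})$ satisfies $g(b) = \Frob_\l(\sqrt{b})/\sqrt{b}$, so I need $\Frob_\l$ to act on $\O_{K,\Sigma}^\times/(\O_{K,\Sigma}^\times)^2 \cong (\Z/2\Z)^n$ (Lemma \ref{unitdim}) exactly as the element of $\Gal$ determined by $h$ via Kummer theory. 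Concretely, let $F = K(\sqrt{\O_{K,\Sigma}^\times})$ be the multiquadratic extension obtained by adjoining square roots of a basis of $\O_{K,\Sigma}^\times/(\O_{K,\Sigma}^\times)^2$; then $\Gal(F/K) \cong \Hom(\O_{K,\Sigma}^\times, \{\pm1\})$ canonically, and I want $\Frob_\l|_F$ to equal the element corresponding to $h$. Since $h$ is nontrivial (it equals $\psi_{v_0}$ times the $T$-contribution, and I can always arrange nontriviality — or if $h$ is trivial, note that taking $\chi_\l$ unramified instead still works, but to land in case (v) I may simply absorb a harmless extra ramified prime into $T$ beforehand, or argue directly that the needed primes are still abundant), Chebotarev's density theorem applied to $F/K$ gives infinitely many such $\l$; one checks these $\l$ are automatically split in the relevant part and unramified in $F/K$, so $\l \notin \Sigma \cup T$ and $\l$ is admissible.

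Once such $\l$ is chosen together with $\chi_\l$, all the local data $(f_\mu)_{\mu \in \Sigma}$, $(g_\nu)_{\nu \notin \Sigma}$ satisfy the product formula of Lemma \ref{cft} by construction, so there is $\chi \in \Xset(K)$ with the prescribed restrictions; conditions (i)--(v) hold by design (ramification at a place is detected by nontriviality on the local unit group, which we have arranged). Infinitude follows from the infinitude of admissible $\l$ in Chebotarev, each giving a distinct $\chi$ (they differ at $\l$). The main obstacle I anticipate is the bookkeeping around whether the ``leftover'' homomorphism $h$ can be trivial: if $h = 1$ then no ramified $\chi_\l$ restricts to it (a ramified character is nontrivial on $\O_\l^\times$ but its restriction to $\O_{K,\Sigma}^\times$ could still be trivial, so this is actually fine — I just need $\Frob_\l$ trivial on $F$, i.e. $\l$ split in $F/K$, and such $\l$ are still infinite in number). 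So the argument is uniform, but I would state the Chebotarev condition carefully as ``$\Frob_\l|_F$ equals the prescribed element'' to cover both the trivial and nontrivial cases of $h$ in one stroke.
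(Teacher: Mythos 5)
Your proposal is correct and follows essentially the same route as the paper: prescribe the local data, reduce to the product-formula constraint of Lemma \ref{cft}, and use Lemma \ref{findglobalchar} together with Chebotarev on the multiquadratic extension $K(\sqrt{\O_{K,\Sigma}^\times})$ to find infinitely many admissible $\l$ (the paper phrases this as the single condition \eqref{choiceofq} on $\Frob_\l$, which, as you eventually observe, covers the case where the leftover homomorphism $h$ is trivial by taking $\Frob_\l$ trivial on that field). Your initial hesitation about trivial $h$ is unnecessary, but your final resolution coincides with the paper's uniform treatment.
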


\begin{proof}
Let $\beta_1, \cdots, \beta_n$ be a basis of $O_{K, \Sigma}^\times/(O_{K, \Sigma}^\times)^2$. Choose a prime $\l$ such that 
\begin{equation}
\label{choiceofq}
\Frob_\l(\sqrt{\beta_i})/\sqrt{\beta_i} = \psi_{v_0}(\beta_i) \cdot \prod_{\omega \in T} \Frob_\omega(\sqrt{\beta_i})/\sqrt{\beta_i}
\end{equation}
for all $i$, where the existence is guaranteed by Chebotarev's density theorem. Put $f_\mu \in \Hom(K_\mu^\times, \{\pm1\})$ for $\mu \in \Sigma$ and $g_\nu \in \Hom(\O_\nu^\times, \{\pm1\})$ for $ \nu \not\in \Sigma$ such that
\begin{itemize}
\item
$f_{v_0} = \psi_{v_0}$,
\item
$f_v = 1_v$ for $v \in \Sigma \backslash \{v_0\}$,
\item
$g_{\omega}$ is not trivial for $\omega \in T$,
\item
$g_{\p}$ is trivial for $\p \notin \Sigma \cup T \cup \{\l\}$, and
\item
$g_\l$ is not trivial.
\end{itemize}
By Lemma \ref{findglobalchar}, we have $$
g_\l(\beta_i) = f_{v_0}(\beta_i) \cdot \displaystyle{\prod_{v \in \Sigma \backslash \{v_0\}} f_v(\beta_i) \cdot \prod_{\omega \in T} g_\omega(\beta_i) \cdot \prod_{\p \notin \Sigma \cup \{\l\} \cup T} g_\p(\beta_i)}.
$$
By Lemma \ref{cft}, this means that there exists a character $\chi \in \Xset(K)$ such that for $\mu \in \Sigma$ and $\nu \notin \Sigma$, $\chi_\mu = f_\mu$ and $\chi_\nu|_{\O_\nu^\times} = g_\nu$, where $\chi_\mu, \chi_\nu$ are restrictions of $\chi$ to $K_{\mu}^\times, K_{\nu}^\times$ via the local reciprocity maps, respectively. It is easy to see $\chi$ satisfies the desired conditions. For example, for $\omega \in T$, $\chi_\omega|_{\O_\omega^\times} = g_\omega$, and this shows that $\chi_\omega$ is ramified since $g_\omega(\O_\omega^\times) \neq 1$ by our construction.  
\end{proof}

\begin{prop}
\label{idea2}
Let $v_0 \in \Sigma$ and $\psi_{v_0} \in \Xset(K_{v_0})$.
\begin{enumerate}
\item
If $\psi_{v_0}(\Delta_E) = -1$ and $\Gal(K(E[2])/K) \cong \Z/2\Z$, there exist infinitely many $\varphi \in \Xset(K)$ such that $r_2(E^\varphi/K) = r_2(E, \psi_{v_0}) +1$.
\item
If $\psi_{v_0}(\Delta_E) = 1,$ there exist infinitely many $\varphi \in \Xset(K)$ such that $r_2(E^\varphi/K) = r_2(E, \psi_{v_0}) + 2$.
\item
Suppose that $\psi_{v_0}(\Delta_E) = 1$ and there exists an element $c \in \Sel_2(E, \psi_{v_0})$. Let $T = \varnothing$ and choose $\l$ and $\chi$ as in Lemma \ref{idea}. Suppose that $\res_\l(c) \neq 0$. Then there exist infinitely many $\varphi \in \Xset(K)$ such that $r_2(E^\varphi/K) = r_2(E, \psi_{v_0})$.
\end{enumerate}
\end{prop}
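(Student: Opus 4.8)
The plan is to treat the three parts together: produce the twisting characters with Lemma~\ref{idea}, read off the local position of the auxiliary prime from $\psi_{v_0}(\Delta_E)$, and pin down the change in $2$-Selmer rank using the parity formula, the norm-index bound, and the survival of a rational $2$-torsion point locally at that prime. As in the proof of Theorem~\ref{inc2}, if $\Gal(K(E[2])/K)\cong S_3$ or $A_3$ the three statements follow from \cite[Proposition~6.6]{mj}, so I assume this group has order $1$ or $2$ and fix a nontrivial $P\in E(K)[2]$. Applying Lemma~\ref{idea} to the given $v_0,\psi_{v_0}$ with $T=\varnothing$ yields infinitely many primes $\l\notin\Sigma$ and characters $\chi\in\Xset(K)$ with $\chi_{v_0}=\psi_{v_0}$, $\chi_v$ trivial for $v\in\Sigma\setminus\{v_0\}$, $\chi_\p$ unramified for $\p\notin\Sigma\cup\{\l\}$, and $\chi_\l$ ramified; Lemma~\ref{localzero}, applied at all places other than $v_0$ and $\l$, then identifies the local conditions and gives $\Sel_2(E^\chi)=\Sel_2(E,\psi_{v_0},\chi_\l)$. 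Since $K(E[2])/K$ is unramified outside $\Sigma$ and $\Pic(\O_{K,\Sigma})=1$, we have $K(E[2])\subseteq K(\sqrt{\O_{K,\Sigma}^\times})$, and $K(E[2])=K(\sqrt{\Delta_E})$ when $\Gal(K(E[2])/K)\cong\Z/2\Z$; thus the congruence \eqref{choiceofq} defining $\l$ determines $\Frob_\l$ on $K(E[2])$, and unwinding it shows $\Frob_\l$ acts trivially on $K(E[2])$ exactly when $\psi_{v_0}(\Delta_E)=1$. Hence $\l\in\cP_2$ in parts (ii), (iii), and $\l\in\cP_1$ in part (i). Writing $i=\dimtwo\alpha_\l(1_\l)$, Remark~\ref{localdim} and Lemma~\ref{ramhv} give $\dimtwo\alpha_\l(\chi_\l)=h_\l(\chi_\l)=i$ and $\alpha_\l(1_\l)\cap\alpha_\l(\chi_\l)=\{0\}$, with $i=2$ in (ii),(iii) and $i=1$ in (i).

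Two ingredients then control $r_2(E^\chi)-r_2(E,\psi_{v_0})$. First, Theorem~\ref{parity} applied to $\chi$ and to $\psi_{v_0}$ (using $h_v(\chi_v)=0$ for $v\neq v_0,\l$) gives $r_2(E^\chi)-r_2(E,\psi_{v_0})\equiv i\pmod 2$, and Corollary~\ref{babo} (with $\eta_\l=1_\l$, where $r_2(E,\psi_{v_0},1_\l)=r_2(E,\psi_{v_0})$) gives $|r_2(E^\chi)-r_2(E,\psi_{v_0})|\le i$. Second, because $E$ has good reduction at $\l\nmid 2$ the module $E[4]=E^\chi[4]$ is unramified at $\l$, so inertia acts trivially on it; together with ramification of $\chi_\l$ this forces $E^\chi(K_\l)[4]=E^\chi(K_\l)[2]$, so by Lemma~\ref{4torsion}(ii) the projection $E^\chi(K_\l)[2]\to E^\chi(K_\l)/2E^\chi(K_\l)$ is an isomorphism; hence the image of $P$ in $E^\chi(K_\l)/2E^\chi(K_\l)$ — equivalently the restriction to $\l$ of the Kummer class of $P\in E^\chi(K)[2]$, which lies in $\Sel_2(E^\chi)$ — is nonzero. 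Setting $a=\dimtwo\res_\l(\Sel_2(E,\psi_{v_0}))\le i$ and $b=\dimtwo\res_\l(\Sel_2(E^\chi))\le i$, one has $r_2(E^\chi)-r_2(E,\psi_{v_0})=b-a$ (both Selmer groups contain $\Sel_{2,\l}(E,\psi_{v_0})$ with quotients of these dimensions, using $\alpha_\l(1_\l)\cap\alpha_\l(\chi_\l)=0$), and the second ingredient yields $b\ge 1$.

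For part (i), $i=1$ forces $b=1$; if $a=1$ then $\res_\l:\Sel_2(E,\psi_{v_0})\to\alpha_\l(1_\l)$ is surjective, so Corollary~\ref{ressur} gives $\Sel_2(E^\chi)=\Sel_{2,\l}(E,\psi_{v_0})$ and hence $b=0$, a contradiction; so $a=0$ and $r_2(E^\chi)=r_2(E,\psi_{v_0})+1$, for each of the infinitely many $\l$. For part (ii), $i=2$ with the parity congruence and $b\ge 1$ force $r_2(E^\chi)-r_2(E,\psi_{v_0})\in\{0,2\}$: if infinitely many $\l$ give the value $2$ we are done; otherwise pick $\chi_1$ with $r_2(E^{\chi_1})=r_2(E,\psi_{v_0})$ and apply Theorem~\ref{inc2} to the curve $E^{\chi_1}$, obtaining infinitely many $\chi_2$ with $r_2(E^{\chi_1\chi_2})=r_2(E^{\chi_1})+2=r_2(E,\psi_{v_0})+2$, so the characters $\varphi=\chi_1\chi_2$ work. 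For part (iii), again $r_2(E^\chi)-r_2(E,\psi_{v_0})\in\{0,2\}$, but $\res_\l(c)\neq0$ with $c\in\Sel_2(E,\psi_{v_0})$ forces $a\ge1$, which excludes the value $2$; hence $r_2(E^\chi)=r_2(E,\psi_{v_0})$, and since $\res_\l(c)\neq0$ is a Chebotarev condition it holds for infinitely many of the primes furnished by Lemma~\ref{idea} as soon as it holds for one, giving infinitely many such $\varphi$.

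The step I expect to be the main obstacle is part (ii): for a single auxiliary prime $\l$ one cannot decide between the values $0$ and $2$ (equivalently between $a=1$ and $a=0$), and $\res_\l(\Sel_2(E,\psi_{v_0}))=0$ cannot in general be arranged compatibly with the congruence \eqref{choiceofq}; this is exactly why the argument detours through Theorem~\ref{inc2} once some twist realizes $r_2(E,\psi_{v_0})$. The points needing care in that detour are that $\chi_1$ composed with the increment characters supplied by Theorem~\ref{inc2} yields genuinely new characters of $K$, and the verification of the second ingredient above — that ramified $\chi_\l$ plus good reduction at $\l$ really forces $E^\chi(K_\l)[4]=E^\chi(K_\l)[2]$ even when $\l\in\cP_1$, where $E[4]\not\subset E(K_\l)$ and Lemma~\ref{4torsion}(i) is unavailable.
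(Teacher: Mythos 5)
Your core argument, in the case $\Gal(K(E[2])/K)$ of order $1$ or $2$, is sound and in fact slightly sharper than the paper's: the paper only pins $r_2(E^\chi)-r_2(E,\psi_{v_0})$ down to $\{\pm1\}$ in (i) and to $\{-2,0,2\}$ in (ii)--(iii) and then repairs with Theorem \ref{inc2}, whereas your use of the local Kummer image of the rational $2$-torsion point at $\l$ gives the exact value $+1$ in (i) and $0$ in (iii) for every admissible $\l$. The genuine gap is your opening reduction: disposing of the case $\Gal(K(E[2])/K)\cong S_3$ or $A_3$ for parts (ii) and (iii) by citing \cite[Proposition 6.6]{mj} does not work as stated. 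That result (as used in this paper) produces twists with $r_2(E^\chi)=r_2(E)+2$, i.e.\ it compares with $r_2(E)$; parts (ii) and (iii) compare with $r_2(E,\psi_{v_0})$, a Selmer group with a modified local condition at $v_0\in\Sigma$, and building the bridge between that quantity and ranks of honest twists is precisely the content of the proposition (for instance, in the constant-parity situation one cannot see a priori that $r_2(E,\psi_{v_0})+2$ is an attainable value). The paper needs no case split at all: its argument never uses a rational $2$-torsion point, tolerates $\l\in\cP_0$ (where the two Selmer groups simply coincide), and closes with Theorem \ref{inc2}. Your own machinery repairs the gap the same way --- drop the ``$b\ge1$'' input there, keep parity plus Corollary \ref{babo} to get a difference in $\{-2,0,2\}$ for (ii), keep $a\ge1$ to exclude $+2$ for (iii), and finish with Theorem \ref{inc2} and the Chebotarev repetition --- but as written that case is not covered.

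A second, smaller point: the assertion ``$E[4]=E^\chi[4]$ is unramified at $\l$'' is false as written, since $E^\chi[4]$ is ramified at $\l$ exactly because $\chi_\l$ is. The fact you need is nevertheless true: $E[4]$ is unramified at $\l$, so inertia acts on $E^\chi[4]$ through $\chi_\l$, i.e.\ by $\pm1$, and its fixed points are $E[2]$; hence $E^\chi(K_\l)[4]=E^\chi(K_\l)[2]$, Lemma \ref{4torsion}(ii) applies to $E^\chi/K_\l$, and the nontrivial point $P\in E(K)[2]$ has nonzero image in $E^\chi(K_\l)/2E^\chi(K_\l)$, which is exactly what your ``$b\ge1$'' step (including the $\l\in\cP_1$ case of part (i) that you flagged) requires. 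With these two repairs --- the ramification statement rewritten as above, and the $E(K)[2]=0$ case of (ii)--(iii) handled by the parity/Corollary \ref{babo}/Theorem \ref{inc2} route rather than by citation --- your proposal is a correct variant of the paper's proof, refining it in parts (i) and (iii).
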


\begin{proof}
For (i) and (ii), let $T = \varnothing$ and we begin with choosing $\l$ and $\chi$ as in Lemma \ref{idea}. Note that the local conditions for $\Sel_2(E^\chi)$ and $\Sel_2(E, \chi_{v_0})$ are the same everywhere except possibly at $\l$ by Lemma \ref{localzero}. Thus Corollary \ref{babo} shows that $|r_2(E^\chi) - r_2(E, \chi_{v_0})| \le 2$. The conditions in (i) and the product formula imply $\chi_\l(\Delta_E) = \psi_{v_0}(\Delta_E) = -1$, so $\Delta_E \notin (K_\l^\times)^2$, which shows that $E(K_\l)[2] \cong \Z/2\Z$. Hence Theorem \ref{parity}, Lemma \ref{ramhv} prove that $r_2(E^\chi)$ is $r_2(E, \psi_{v_0}) - 1,$ or $r_2(E, \psi_{v_0})+1$. Then (i) follows from Theorem \ref{inc2}. For (ii), the condition $\psi_{v_0}(\Delta_E) = 1$ and the product formula imply $\chi_\l(\Delta_E) = 1$, so $\Delta_E \in (K_\l^\times)^2$, which shows that $E(K_\l)[2] \cong (\Z/2\Z)^2$ or $E(K_\l)[2] = 0$. Then Theorem \ref{parity}, Lemma \ref{ramhv} show that $r_2(E^\chi)$ is $r_2(E, \psi_{v_0}) - 2,$ or $r_2(E, \psi_{v_0})$ or $r_2(E, \psi_{v_0}) + 2$ and the rest follows from Theorem \ref{inc2}. To see (iii), note that the condition $\res_\l(c) \neq 0$ rules out the possibility for $r_2(E^\chi)$ to be $r_2(E, \psi_{v_0}) + 2$ in the proof of (ii) (for otherwise, $r_2(E^\chi) \ge \dimtwo(\Sel_{2, \l}(E^\chi)) + 3$ and this would mean $r_2(E^\chi) \ge \dimtwo(\Sel_{2}^{\l}(E^\chi)) + 1$, which is absurd).  
\end{proof}

\begin{lem}
\label{real}
Suppose that $K$ has a real place $v_0$, so $K_{v_0} \cong \R$. Let $\eta \in \Xset(K_{v_0})$ be the sign character. Then 
\[
h_{v_0}(\eta) =
\begin{cases}
0 & \text{if $\dimtwo(E(K_{v_0})[2]) = 1$}, \\
1 & \text{if $\dimtwo(E(K_{v_0})[2]) = 2$}.
\end{cases}
\]
\end{lem}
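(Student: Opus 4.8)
The plan is to compute $h_{v_0}(\eta) = \dimtwo(E(\R)/\N E(\C))$ directly from Lemma \ref{hvkramer}, using the well-understood structure of $E(\R)$ as a real Lie group. Since $\ker(\eta)$ cuts out the extension $\C/\R$, we have $L = \C$ and the norm map in question is $\N : E(\C) \to E(\R)$, which coincides with $1 + c$ where $c$ denotes complex conjugation acting on $E(\C)$.

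First I would recall the topological description of $E(\R)$. As a compact real Lie group of dimension $1$, $E(\R)$ is isomorphic to either $\R/\Z$ (one connected component) or $\R/\Z \times \Z/2\Z$ (two connected components), and the two cases are distinguished precisely by whether $\dimtwo(E(\R)[2]) = 1$ or $2$; indeed $E(\R)[2] \cong \Z/2\Z$ when $E(\R)$ is connected and $(\Z/2\Z)^2$ when it has two components (this is classical, e.g.\ via the sign of $\Delta_E$, or the number of real roots of the cubic). Correspondingly $E(\R)/2E(\R)$ has order $1$ in the connected case and order $2$ in the two-component case, since multiplication by $2$ is surjective on the connected component $(E(\R))^0 \cong \R/\Z$ and is the identity on $\pi_0(E(\R))$.

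Next I would identify $\N E(\C) = (1+c)E(\C)$ with the identity component $(E(\R))^0$. One inclusion is immediate: $E(\C)$ is connected (it is $\C/\Lambda$, a real torus of dimension $2$), so its continuous image under $\N$ lies in the connected component of the identity of $E(\R)$. For the reverse inclusion, note that $(1+c)$ restricted to $E(\C)$ is a continuous endomorphism whose image is a closed connected subgroup; one checks it is all of $(E(\R))^0$, for instance by observing that $E(\R) \hookrightarrow E(\C)$ and for $x \in (E(\R))^0 \cong \R/\Z$ one has $x = (1+c)(y)$ with $y \in E(\R)$ any solution of $2y = x$ (which exists since $2$ is surjective on $\R/\Z$), because $c$ fixes $E(\R)$ pointwise so $(1+c)(y) = 2y = x$. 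Hence $\N E(\C) = (E(\R))^0$, and therefore
$$
h_{v_0}(\eta) = \dimtwo\big(E(\R)/(E(\R))^0\big) = \dimtwo(\pi_0(E(\R))),
$$
which is $0$ when $E(\R)$ is connected and $1$ otherwise. Combining with the component-count criterion in terms of $\dimtwo(E(K_{v_0})[2])$ from the first step gives exactly the claimed case distinction.

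The only slightly delicate point — the main obstacle — is the surjectivity $\N E(\C) \supseteq (E(\R))^0$: one must be a little careful that the argument via $2y = x$ stays inside $E(\R)$ rather than merely inside $E(\C)$, and that it indeed hits the whole identity component and not a proper subgroup. An alternative, perhaps cleaner, route to the same fact is to use that $h_{v_0}(\eta) = \dimtwo(H^1(\R, E[2])) - \dimtwo(\alpha_{v_0}(1_{v_0})) - \dimtwo(\alpha_{v_0}(\eta))$ is forced by the Tate local duality pairing (Theorem \ref{tlp}) being a nondegenerate pairing on a space of dimension $2\dimtwo(E(\R)[2])$ together with the metabolicity of the $\alpha$'s, reducing everything to computing $\dimtwo(\alpha_{v_0}(\eta)) = \dimtwo(E^\eta(\R)/2E^\eta(\R))$ for the twist; since $E^\eta$ is the quadratic twist by $-1$, its real points again form a compact $1$-dimensional Lie group and the same component-counting applies, and the parity bookkeeping then yields $h_{v_0}(\eta) \in \{0,1\}$ with the stated dependence. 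I would present the direct Kramer-formula argument as the main proof and mention the duality cross-check as a remark if space permits.
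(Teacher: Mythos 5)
Your proposal is correct and follows essentially the same route as the paper: both invoke Lemma \ref{hvkramer} and the fact that $\N(E(\C))$ is the connected component of the identity of $E(\R)$, so that $h_{v_0}(\eta)$ counts the components of $E(\R)$, which are detected by $\dimtwo(E(K_{v_0})[2])$. Your explicit verification of $\N E(\C) = (E(\R))^0$ (via $2y = x$ with $y$ in the identity component, where $c$ acts trivially) correctly fills in the step the paper only asserts.
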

\begin{proof}
The image $\N(E(\C))$ of the norm map
$$
\N: E(\C) \to E(\R)
$$
is the connected component of the identity of $E(\R)$, i.e., $\N(E(\C)) \cong \R/\Z$, where the result follows by Lemma \ref{hvkramer}.
\end{proof}

\begin{lem}
\label{resinjlem}
Let $M= K(E[2])$. The restriction map
\begin{equation}
\label{resinj}
H^1(K,E[2]) \to H^1(M, E[2]) = \Hom(G_M, E[2])
\end{equation}
is an injection. 
\end{lem}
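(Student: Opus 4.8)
The plan is to realize the restriction map \eqref{resinj} as the first nontrivial map in the inflation--restriction exact sequence for $M/K$ and then to kill the preceding term by a small group-cohomology computation. Write $G := \Gal(M/K)$. Since $M = K(E[2])$, the group $G_M$ acts trivially on $E[2]$, so $E[2]^{G_M} = E[2]$, and inflation--restriction reads
\[
0 \too H^1(G, E[2]) \too H^1(K, E[2]) \too H^1(M, E[2]).
\]
Hence injectivity of \eqref{resinj} is equivalent to the vanishing $H^1(G, E[2]) = 0$, which is what I would prove.

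For that vanishing I would use that $G$ acts \emph{faithfully} on $E[2] \cong \F_2^2$ --- again because $M$ is by definition the fixed field of the kernel of $G_K \to \Aut(E[2])$ --- so that $G$ is conjugate in $\GL_2(\F_2) \cong S_3$ to one of $1$, $\Z/2\Z$, $\Z/3\Z$, or $S_3$. The module $E[2]$ has order $4$, so $H^1(G, E[2])$ is a $2$-group; restriction to a $2$-Sylow subgroup $P \subseteq G$ is therefore injective, and since $|P| \le 2$ in every case it suffices to treat $P = 1$ (nothing to do) and $P = \Z/2\Z$. In the latter case the generator of $P$ is an order-$2$ element of $\GL_2(\F_2)$, hence a transvection conjugate to $\bigl(\begin{smallmatrix}1&1\\0&1\end{smallmatrix}\bigr)$, and a one-line computation of $\ker(1+\sigma)/\im(\sigma - 1)$ gives $H^1(P, E[2]) = 0$. (If one prefers to avoid the Sylow reduction, one can go through the four cases directly: $G = 1$ is trivial, $G = \Z/3\Z$ has order prime to $|E[2]|$, $G = \Z/2\Z$ is the transvection computation, and for $G = S_3$ one applies inflation--restriction along the normal subgroup $A_3 \subseteq S_3$, using $E[2]^{A_3} = 0$ and $H^1(A_3, E[2]) = 0$.)

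I expect no real obstacle: once one records the two facts that $G_M$ fixes $E[2]$ pointwise and that $G \hookrightarrow \GL_2(\F_2)$, the argument is entirely formal. The only point requiring a little care is the order-$2$ case, where faithfulness is what guarantees the generator acts as a transvection and not trivially --- this is precisely the case $\Gal(K(E[2])/K) \cong \Z/2\Z$ that drives much of the rest of the paper.
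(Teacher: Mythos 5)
Your argument is correct and is essentially the paper's: both pass through the inflation--restriction sequence to identify the kernel of \eqref{resinj} with $H^1(\Gal(M/K), E[2])$ and then check that this group vanishes for the possible images of $\Gal(M/K)$ in $\GL_2(\F_2)\cong S_3$. The only cosmetic difference is that you prove the vanishing uniformly by restricting to a Sylow $2$-subgroup and doing the order-$2$ transvection computation $\ker(1+\sigma)=\im(\sigma-1)$, whereas the paper quotes the $S_3$ case as well known and handles the cyclic cases by that same computation directly.
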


\begin{proof}
The Inflation-Restriction Sequence shows that the kernel of \eqref{resinj} is $H^1(M/K, E[2])$. It is well-known that $H^1(\mathrm{GL}_2(\Z/2\Z), E[2]) = 1$ ($\mathrm{GL}_2(\Z/2\Z) \cong S_3$). For the rest cases, let $\sigma$ be a generator of the cyclic group $\Gal(M/K)$. One can see $\mathrm{Ker}(\sigma +1) = \mathrm{Im}(\sigma -1)$, so the cohomology group vanishes. 
\end{proof}

\begin{thm}
\label{realplace}
If $K$ has a real embedding, there exist infinitely many $\chi \in \Xset(K)$ such that $r_2(E^\chi) = r_2(E) + 1$. 
\end{thm}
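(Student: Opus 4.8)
The plan is to separate off the case $\Gal(K(E[2])/K)\cong S_3$ or $A_3$, which is already known, and then to treat the remaining cases (where $\Gal(K(E[2])/K)$ has order $1$ or $2$) by twisting at the given real place, using Lemma \ref{real} and Proposition \ref{idea2}. For the first part: if $\Gal(K(E[2])/K)\cong S_3$ or $A_3$, i.e.\ $E(K)[2]=0$, then since $K$ has a real embedding the constant $2$-Selmer parity condition fails by \cite{dd}, and in this situation the known distribution results (see \cite{MR}, \cite{KMR}, \cite{mj}) already give $r_2(E^\chi)=r$ for a positive density of $\chi$ for every $r\ge0$; in particular there are infinitely many $\chi$ with $r_2(E^\chi)=r_2(E)+1$. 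So I would then assume $\Gal(K(E[2])/K)$ has order $1$ or $2$, fix a real place $v_0$, and let $\eta\in\Xset(K_{v_0})$ be the sign character, splitting into two cases according to whether $\dimtwo E(K_{v_0})[2]$ is $1$ or $2$ (equivalently, whether $\eta(\Delta_E)$ is $-1$ or $+1$).

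In the case $\dimtwo E(K_{v_0})[2]=1$ one has $\Gal(K(E[2])/K)\cong\Z/2\Z$ (it cannot be trivial, since $\Delta_E<0$ in $K_{v_0}=\R$ forbids $E[2]\subset E(K)$), and $H^1(K_{v_0},E[2])=0$ (the module $E[2]$ has a one-dimensional fixed space under $\Gal(\C/\R)$, as in the computation underlying Lemma \ref{real}), so the local condition at $v_0$ is vacuous and $r_2(E,\eta)=r_2(E)$; since $\eta(\Delta_E)=-1$, Proposition \ref{idea2}(i) with $\psi_{v_0}=\eta$ then yields infinitely many $\varphi$ with $r_2(E^\varphi)=r_2(E,\eta)+1=r_2(E)+1$. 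In the case $\dimtwo E(K_{v_0})[2]=2$ one has $\eta(\Delta_E)=+1$, Lemma \ref{real} gives $h_{v_0}(\eta)=1$, and $\dimtwo\alpha_{v_0}(1_{v_0})=\dimtwo E(K_{v_0})/2E(K_{v_0})=1$, so Theorem \ref{parity} together with Corollary \ref{babo} force $r_2(E,\eta)=r_2(E)\pm1$. If $r_2(E,\eta)=r_2(E)-1$, Proposition \ref{idea2}(ii) (applicable since $\eta(\Delta_E)=1$) gives infinitely many $\varphi$ with $r_2(E^\varphi)=r_2(E,\eta)+2=r_2(E)+1$. If instead $r_2(E,\eta)=r_2(E)+1$, I would pick a nonzero $c\in\Sel_2(E,\eta)$ — which by Lemma \ref{resinjlem} corresponds to a nonzero homomorphism $G_{K(E[2])}\to E[2]$ — and choose the prime $\l$ and character $\chi$ of Lemma \ref{idea} (with $T=\varnothing$) in such a way that moreover $\res_\l(c)\ne0$, so that Proposition \ref{idea2}(iii) applies and produces infinitely many $\varphi$ with $r_2(E^\varphi)=r_2(E,\eta)=r_2(E)+1$. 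In every case this gives infinitely many $\chi$ with $r_2(E^\chi)=r_2(E)+1$.

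The step I expect to be the main obstacle is the last one: arranging that the auxiliary prime $\l$ furnished by Lemma \ref{idea} can simultaneously be chosen with $\res_\l(c)\ne0$. This requires imposing a further Chebotarev condition compatible with the Frobenius condition of Lemma \ref{idea}, inside the compositum of $K(E[2])$, the exponent-$2$ abelian extension cut out by $c$, and the multiquadratic field $K(\sqrt u:u\in\O_{K,\Sigma}^\times)$; one must verify these are in sufficiently general position, and, should they fail to be, perturb the Frobenius condition by enlarging the auxiliary set $T$ in Lemma \ref{idea}. Everything else is bookkeeping: the local computations of Lemmas \ref{real} and \ref{hvkramer}, the parity formula Theorem \ref{parity}, the bound Corollary \ref{babo}, and the already-established Proposition \ref{idea2}.
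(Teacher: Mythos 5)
Your overall route is the paper's route: reduce to $|\Gal(K(E[2])/K)|\le 2$, twist locally by the sign character $\eta=\psi_{v_0}$ at a real place, handle $\dimtwo(E(K_{v_0})[2])=1$ via $H^1(\R,E[2])=0$ and Proposition \ref{idea2}(i), and handle the subcase $r_2(E,\eta)=r_2(E)-1$ via Proposition \ref{idea2}(ii). The genuine gap is precisely the step you flag and defer: in the subcase $r_2(E,\eta)=r_2(E)+1$ you take an \emph{arbitrary} nonzero $c\in\Sel_2(E,\eta)$ and hope to choose the prime $\l$ of Lemma \ref{idea} (with $T=\varnothing$) so that in addition $\res_\l(c)\neq 0$. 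For a badly chosen $c$ no such $\l$ exists: let $M=K(E[2])$, let $N$ be the fixed field of the kernel of $c|_{G_M}$ (Lemma \ref{resinjlem}), and let $L=M(\sqrt{\O_{K,\Sigma}^\times})$. Since $c$ satisfies unramified local conditions outside $\Sigma\cup\{v_0\}$, it can happen (and is automatic when $E[2]\subset E(K)$, by the argument of Proposition \ref{2n}) that $N\subset L$; in that situation the Frobenius prescription \eqref{choiceofq} forces $\Frob_\l$ to act on $L$ exactly as complex conjugation at $v_0$ does, so $\Frob_\l|_N$ is completely determined, and if moreover $\res_{v_0}(c)=0$ then $N\subset L\cap K_{v_0}$ and $\res_\l(c)=c(\Frob_\l)=0$ for \emph{every} admissible $\l$. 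Your fallback of ``enlarging $T$'' does not repair this: Proposition \ref{idea2}(iii) is stated and proved with $T=\varnothing$, and adding ramified auxiliary primes changes the Selmer group being compared with $\Sel_2(E,\eta)$, so the conclusion $r_2(E^\varphi)=r_2(E,\eta)$ is no longer what comes out.

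The missing idea, which is how the paper closes this case, is to exploit the standing hypothesis $r_2(E,\eta)=r_2(E)+1$ and choose $c\in\Sel_2(E,\eta)\setminus\Sel_2(E)$, which guarantees $\res_{v_0}(c)\neq 0$. Then one splits according to whether $N\subset L$: if $N\not\subset L$, the nontriviality of $\Frob_\l$ on $N$ can be imposed as an additional, consistent Chebotarev condition in $NL$ on top of \eqref{choiceofq}; if $N\subset L$, choose the basis of $\O_{K,\Sigma}^\times/(\O_{K,\Sigma}^\times)^2$ so that $\psi_{v_0}(\beta_1)=-1$ (possible since $-1\in\O_{K,\Sigma}^\times$), and then \eqref{choiceofq} makes $\Frob_\l$ restrict to $N$ as complex conjugation does, which is nontrivial exactly because $\res_{v_0}(c)\neq 0$. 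In short, the key is ``choose $c$ visible at $v_0$ and make $\Frob_\l$ mimic complex conjugation,'' not merely ``choose $c\neq 0$ and hope for general position.'' Note also that the paper avoids this delicate point altogether when $E[2]\subset E(K)$: a rational $2$-torsion point not divisible by $2$ in $E(K_{v_0})$ shows directly that $r_2(E,\eta)=r_2(E)-1$ there, so the Chebotarev argument is only needed when $\Gal(K(E[2])/K)\cong\Z/2\Z$ and $\eta(\Delta_E)=1$; your merged case division is workable, but only once the choice of $c$ above is built in.
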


\begin{proof}
Let $M = K(E[2])$. We assume $\Gal(M/K)$ has order $1$ or $2$, since otherwise we already know the result holds by 
\cite[Theorem 1.5]{MR} and Theorem \ref{inc2}. We let $v_0$ be a real place, so that $K_{v_0} \cong \R$. Let $\psi_{v_0} \in \Xset(K_{v_0})$ denote the sign character, i.e., $\psi_{v_0}$ sends negative numbers to $-1$.

 Case 1: $E[2] \subset E(K)$. We have $E(K_{v_0}) \cong \R/\Z \oplus \Z/2\Z$. Therefore, there exists a point $P \in E(K)[2]$ that is not divisible by $2$ in $E(K_{v_0})$. One can see $\res_{v_0}(\overline{P}) \neq 0$, where $\overline{P}$ is the image of $P$ in the map $E(K) \to E(K)/2E(K) \to \Sel_2(E) \subset H^1(K, E[2])$, because the image of $P$ in $E(K_{v_0})/2E(K_{v_0})$ is not trivial. The restriction map $\Sel_2(E)/\Sel_{2,v_0}(E) \to \alpha_{v_0}(1_{v_0})$ is an isomorphism (since $\res_{v_0}(\Sel_2(E)) \neq 0$) and the restriction map $\Sel_2(E, \psi_{v_0})/\Sel_{2,v_0}(E) \to \alpha_{v_0}(\psi_{v_0})$ is an injection. 
Therefore, Theorem \ref{parity} and Lemma \ref{real} show that $r_2(E, \psi_{v_0}) = r_2(E) - 1$. Then the result follows from Proposition \ref{idea2}(ii). 

 Case 2: $\Gal(M/K) \cong \Z/2\Z$ and $E(K_{v_0}) \cong \R/\Z$ (i.e., $\psi_{v_0}(\Delta_E) = -1$). We have $\Sel_2(E) = \Sel_2(E, \psi_{v_0})$ since $\alpha_{v_0}(1_v),\alpha_{v_0}(\psi_{v_0}) \subset H^1(\R, E[2]) = 0$ in this case. The result follows form Proposition \ref{idea2}(i). 

 Case 3: $\Gal(M/K) \cong \Z/2\Z$ and $E(K_{v_0}) \cong \R/\Z \oplus \Z/2\Z$ ($\Delta_E \not\in (K^\times)^2$ and $\psi_{v_0}(\Delta_E) = 1$). Suppose that $\beta_1, \cdots, \beta_{n-1}, \Delta_E$ form a basis of $\O_{K, \Sigma}^\times/(\O_{K, \Sigma}^\times)^2$. By Corollary \ref{babo}, we have $|r_2(E, \psi_{v_0}) - r_2(E)| \le 1$. Then $r_2(E, \psi_{v_0}) = r_2(E) + 1$  or $r_2(E) - 1$ by Theorem \ref{parity} and Lemma \ref{real}. If  $r_2(E, \psi_{v_0}) = r_2(E) - 1$, Proposition \ref{idea2}(ii) proves the result. Hence for the rest of the proof, we assume $r_2(E, \psi_{v_0}) = r_2(E) + 1$. Choose $c \in \Sel_2(E, \psi_{v_0}) \backslash \Sel_2(E)$. Then $\res_{v_0}(c) \neq 0$. Let $\tilde{c}$ denote the image of $c$ in the map \eqref{resinj} in Lemma \ref{resinjlem}. Let $L:= M(\sqrt{\beta_1}, \cdots, \sqrt{\beta_{n-1}})$ and $N:=\overline{M}^{\ker(\tilde{c})}$ (we identify $\overline{K}$ and $\overline{M}$).

(i) First, suppose that $N \not\subset L$. 
 Choose $\l \in \cP_2$ so that
\begin{itemize}
\item
$\l$ is unramified in $NL/M$
\item
$\Frob_\l(\sqrt{\beta_i})/\sqrt{\beta_i} = \psi_{v_0}(\beta_i),$
\item
$\Frob_\l|_{\Gal(N/M)} \neq 1$, i.e., $N \not\subset K_\l$. 
\end{itemize}
It is possible because $N \not\subset L$. Note that $\l$ is chosen as in Lemma \ref{idea} for $T = \varnothing$ (see \eqref{choiceofq}). Then $\res_\l(c) \neq 0$ since $N \not\subset K_\l$. The result follows from Proposition \ref{idea2}(iii).

(ii) Now we assume that $N \subset L$. By choosing a basis again, we may assume that $\psi_{v_0}(\beta_1) = -1$ and $\psi_{v_0}(\beta_2) = \psi_{v_0}(\beta_3) = \cdots = \psi_{v_0}(\beta_{n-1}) = \psi_{v_0}(\Delta_E) = 1$. Since $\res_{v_0}(c) \neq 0$, we have $N \not\subset M(\sqrt{\beta_2}, \sqrt{\beta_3}, \cdots ,\sqrt{\beta_{n-1}}) (= L \cap K_{v_0})$. Choose $\l \in \cP_2$ so that
\begin{itemize}
\item
$\l$ is unramified in $L/K$
\item
$\Frob_\l(\sqrt{\beta_i})/\sqrt{\beta_i} = \psi_{v_0}(\beta_i).$
\end{itemize}
Clearly, $L \cap K_\l = M(\sqrt{\beta_2}, \cdots, \sqrt{\beta_{n-1}})$. Note that $\l$ is chosen as in Lemma \ref{idea} for $T = \varnothing$ (see \eqref{choiceofq}). Therefore $\res_\l(c) \neq 0$, since $N \subset K_\l$ would mean $N \subset  M(\sqrt{\beta_2}, \cdots, \sqrt{\beta_{n-1}})$, which is a contradiction. Then the theorem follows from Proposition \ref{idea2}(iii). 
\end{proof}

\begin{thm}
\label{multiplicative}
Suppose that $E$ has multiplicative reduction at a prime $v_0$, where $v_0 \nmid 2$. Then there exist (infinitely many) $\chi \in \Xset(K)$ such that $r_2(E^\chi) = r_2(E) + 3$. If moreover, 
$E(K)[2] \cong \Z/2\Z$ and $v_0(\Delta_E)$ is odd where $v_0$ denote the normalized valuation of $K_{v_0}$, then there exist (infinitely many) $\chi \in \Xset(K)$ such that $r_2(E^\chi) = r_2(E) + 1$.
\end{thm}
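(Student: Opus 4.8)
The plan is to choose a suitable $\psi_{v_0}\in\Xset(K_{v_0})$ at the place $v_0$ of multiplicative reduction and feed it into Proposition~\ref{idea2}, after a preliminary local computation. Over $K_{v_0}$ — or, in the non-split case, after an unramified quadratic twist — the curve $E$ has a Tate parametrization $E(K_{v_0})\cong K_{v_0}^\times/q^\Z$, and $\Delta_E\equiv q\pmod{(K_{v_0}^\times)^2}$; in particular $v_0(\Delta_E)$ and $v_0(q)$ have the same parity, and $\psi(\Delta_E)=\psi(q)$ for every $\psi\in\Xset(K_{v_0})$. Since $v_0\nmid 2\infty$ we have $\dimtwo(\alpha_{v_0}(1_{v_0}))=\dimtwo(E(K_{v_0})/2E(K_{v_0}))=\dimtwo(E(K_{v_0})[2])$, and the last quantity — which is twist-invariant, since $E^\chi[2]\cong E[2]$ as $G_{K_{v_0}}$-modules — is $1$ if $v_0(\Delta_E)$ is odd and lies in $\{1,2\}$ otherwise. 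Moreover, for a \emph{nontrivial} $\psi\in\Xset(K_{v_0})$ with $L=\overline{K_{v_0}}^{\ker\psi}$, the norm map $\N\colon E(L)\to E(K_{v_0})$ (induced on the Tate parametrizations by the field norm $\N_{L/K_{v_0}}$) yields $\N E(L)=E(K_{v_0})$, so $h_{v_0}(\psi)=0$ by Lemma~\ref{hvkramer}, when $\psi(\Delta_E)=-1$, and $[E(K_{v_0}):\N E(L)]=2$, so $h_{v_0}(\psi)=1$, when $\psi(\Delta_E)=1$. I expect this last computation, and in particular its non-split case (handled either directly on the twisted parametrization or by reducing to the split case), to be the main technical obstacle; the remaining steps are bookkeeping with Theorem~\ref{parity} and Corollary~\ref{babo}.

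\textbf{Proof of the ``$+1$'' statement.}
Here $E(K)[2]\cong\Z/2\Z$, so $\Gal(K(E[2])/K)\cong\Z/2\Z$, and $v_0(\Delta_E)$ is odd. Let $\psi_{v_0}$ be the unramified quadratic character of $K_{v_0}$; then $\psi_{v_0}(\Delta_E)=\psi_{v_0}(q)=(-1)^{v_0(q)}=-1$, so $h_{v_0}(\psi_{v_0})=0$ by the computation above, while $\dimtwo(\alpha_{v_0}(1_{v_0}))=1$. Corollary~\ref{babo} then gives $|r_2(E,\psi_{v_0})-r_2(E)|\le 1$ and Theorem~\ref{parity} gives $r_2(E,\psi_{v_0})\equiv r_2(E)\pmod 2$, whence $r_2(E,\psi_{v_0})=r_2(E)$. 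Since $\psi_{v_0}(\Delta_E)=-1$ and $\Gal(K(E[2])/K)\cong\Z/2\Z$, Proposition~\ref{idea2}(i) produces infinitely many $\varphi\in\Xset(K)$ with $r_2(E^\varphi)=r_2(E,\psi_{v_0})+1=r_2(E)+1$.

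\textbf{Proof of the ``$+3$'' statement.}
Choose a nontrivial $\psi_{v_0}\in\Xset(K_{v_0})$ with $\psi_{v_0}(\Delta_E)=1$; this is possible because $\Xset(K_{v_0})$ has order $4$ (as $v_0\nmid 2$) while the kernel of $\psi\mapsto\psi(\Delta_E)$ has index at most $2$. Then $h_{v_0}(\psi_{v_0})=1$, so Theorem~\ref{parity} and Corollary~\ref{babo} (with $\dimtwo(\alpha_{v_0}(1_{v_0}))\le 2$) force $r_2(E,\psi_{v_0})=r_2(E)\pm 1$. Proposition~\ref{idea2}(ii) then gives infinitely many $\varphi\in\Xset(K)$ with $r_2(E^\varphi)=r_2(E,\psi_{v_0})+2$, which is either $r_2(E)+3$ or $r_2(E)+1$. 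In the first case we are done. In the second, fix one such $\varphi$ and apply Theorem~\ref{inc2} to $E^\varphi$: this gives infinitely many $\psi\in\Xset(K)$ with $r_2(E^{\varphi\psi})=r_2\big((E^\varphi)^\psi\big)=r_2(E^\varphi)+2=r_2(E)+3$, and the curves $E^{\varphi\psi}$ are pairwise distinct quadratic twists of $E$. Note that this argument never uses the order of $\Gal(K(E[2])/K)$, so the cases where $\Gal(K(E[2])/K)$ is isomorphic to $S_3$ or $A_3$ need no separate treatment.
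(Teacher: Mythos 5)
Your ``$+1$'' argument is sound and is essentially the paper's proof: you take the unramified quadratic character at $v_0$, observe $h_{v_0}(\psi_{v_0})=0$ because $v_0(\Delta_E)$ is odd, and invoke Proposition \ref{idea2}(i); whether one then concludes $r_2(E,\psi_{v_0})=r_2(E)$ via Corollary \ref{babo} and Theorem \ref{parity} (as you do) or via equality of the local conditions (as the paper does) is immaterial. The problem is in the ``$+3$'' part, and it sits exactly at the step you flagged and then waved away. Your blanket local claim --- that for \emph{every} nontrivial $\psi\in\Xset(K_{v_0})$ one has $h_{v_0}(\psi)=1$ when $\psi(\Delta_E)=1$ --- is true for split multiplicative reduction and for the unramified character in the nonsplit case, but it is \emph{false} for a ramified $\psi$ when the reduction is nonsplit. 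Concretely, take $K_{v_0}=\Q_3$, let $E$ be the unramified quadratic twist of the Tate curve with parameter $q=3$ (nonsplit multiplicative reduction, $\Delta_E\equiv 3$ modulo squares), and let $\psi$ cut out $L=\Q_3(\sqrt{-3})$; then $\psi(\Delta_E)=\psi(3)=1$ since $3$ is a norm from $L$. Here $E(\Q_3)=E_0(\Q_3)$ (trivial component group, as $v(q)$ is odd and the reduction is nonsplit), with torus part $T(\F_3)\cong\Z/4\Z$. Norms from $E_0(L)$ only reach $2T(\F_3)$ because $L/\Q_3$ is totally ramified, but $E(L)$ has a nontrivial component ($v_L(q)=2$), and the norm of a point in the nonidentity component --- e.g.\ the point given by $(\sqrt{-2}+i)\sqrt{-3}$ under the Tate parametrization over $L(i)$, whose norm is $3(\sqrt{-2}+i)^2$ --- reduces to $\pm i$, a generator of $T(\F_3)$. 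Hence $\N E(L)=E(\Q_3)$ and, by Lemma \ref{hvkramer}, $h_{v_0}(\psi)=0$, not $1$.

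This is fatal to your ``$+3$'' argument precisely in the case it must also cover: if $v_0(\Delta_E)$ is odd, the unramified character does not kill $\Delta_E$, so your $\psi_{v_0}$ is forced to be the (unique) ramified character with $\psi_{v_0}(\Delta_E)=1$; if the reduction is nonsplit, then $h_{v_0}(\psi_{v_0})=0$, Theorem \ref{parity} gives no parity change, and Proposition \ref{idea2}(ii) together with Theorem \ref{inc2} can only move $r_2$ by even amounts from $r_2(E)$, so $r_2(E)+3$ is unreachable by this route. (Your closing remark that the argument never uses $\Gal(K(E[2])/K)$ should have been a warning: the odd shift has to come from somewhere.) The paper sidesteps this by always using the \emph{unramified} character at $v_0$, for which Kramer's Propositions 1 and 2(a) give $h_{v_0}=1$ exactly when $v_0(\Delta_E)$ is even, in both the split and nonsplit cases; when $v_0(\Delta_E)$ is odd it gets the odd shift from Proposition \ref{idea2}(i) (using $E(K)[2]\cong\Z/2\Z$, the case $E(K)[2]=0$ being covered by the Mazur--Rubin result quoted there) and then reaches $+3$ with Theorem \ref{inc2}. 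Your proof would be repaired by the same case split, or by an honest computation of the norm index for ramified twists at nonsplit primes --- but as written the key local assertion is wrong, so the ``$+3$'' claim is not proved.
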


\begin{proof}
If $E(K)[2] = 0$, \cite[Theorem 1.5]{MR} and Theorem \ref{inc2} prove the stronger statement that $A_E = \Z_{\ge 0}$. Suppose that $E(K)[2] \neq 0$. Choose the (non-trivial) quadratic unramified character $\psi_{v_0} \in \Xset(K_{v_0}).$  By local class field theory, $\psi_{v_0}(\Delta_E) = 1$ if and only if $v_0(\Delta_E)$ is even. By Corollary \ref{babo}, we have $|r_2(E) - r_2(E, \psi_{v_0})| \le 2$. Therefore, \cite[Proposition 1 and 2(a)]{kramer} and Theorem \ref{parity} show that $r_2(E) - r_2(E, \psi_{v_0})$ is either $-1$ or $1$. Let $T = \varnothing$ and choose $\l$ and $\chi$ as in Lemma \ref{idea}. If $\psi_{v_0}(\Delta_E) = 1$, \cite[Proposition 1 and 2(a)]{kramer} shows that $h_{v_0}(\psi_{v_0}) = 1$. Then Proposition \ref{idea2}(ii) and Theorem \ref{inc2} prove the first assertion. If $\psi_{v_0}(\Delta_E) = -1$ (so $E(K)[2] \cong \Z/2\Z$), \cite[Proposition 1 and 2(a)]{kramer} shows that $h_{v_0}(\psi_{v_0}) = 0$, so $\Sel_2(E) = \Sel_2(E, \psi_{v_0}).$ Therefore the second assertion follows from Proposition \ref{idea2}(i). 
\end{proof}

\section{An upper bound for $t_E$}
We continue to assume that $E$ is an elliptic curve over a number field $K$. Recall that $t_E$ is the smallest number in the set $A_E=\{r_2(E^\chi): \chi \in \Xset(K)\}$. In this section, we study $t_E$. Let $s_2$ be the number of complex places of $K$.

\begin{exa}
\label{example}
Let $E_{(m)}$ be the elliptic curve over $K$ defined by the equation
\begin{equation}
\label{klagsbrunex}
E_{(m)}: y^2 + xy = x^3 - 128m^2x^2 - 48m^2x - 4m^2.
\end{equation}
Suppose that $1+256m^2 \notin (K^\times)^2$. Then $E_{(m)}$ has a single point $(-1/4, 1/8)$ of order $2$ in $E_{(m)}(K)$. In \cite{klagsbrun}, Klagsbrun shows that $t_{E_{(m)}} \ge s_2 + 1$. Note that in this paper $r_2(E)$ is defined (slightly) differently from that defined in \cite{klagsbrun} (In \cite{klagsbrun}, the author subtracts the contribution of rational $2$-torsion points from $\dimtwo(\Sel_2(E))$ for the ``$2$-Selmer rank''). As his example suggests, $t_E$ can be a lot bigger than the trivial lower bound $\dimtwo(E(K)[2])$.
\end{exa}

\begin{rem}
\label{extendklagsbrunexample}
If $K$ contains $\sqrt{1+256m^2}$, then $E(K)$ contains all $2$-torsion points. In this case, we still can prove $t_{E_{(m)}} \ge s_2$ using the argument in \cite{klagsbrun}. Note that all Lemmas and Propositions in Section 3 in {\it op.~cit.}~ can be proved by the exactly same methods. However, in the proof of Proposition 4.1 in {\it op.~cit.}, now the map from $\Sel_{\phi}(E)$ to $\Sel_2(E)$ is injective and $\dimtwo(\Sel_{\hat{\phi}}(E'/K)) \ge 0$, so $r_2(E) \ge \text{ord}_2(\mathcal{T}(E/E'))$ is the correct lower bound we can get from applying the argument of the proof of Proposition 4.1 in {\it op.~cit.}.
\end{rem}

For the rest of the paper, we let $|\Sigma| = n$ and $E[2] \subset E(K)$. Note that this means if $v \notin \Sigma$, then $v \in \cP_2$. For a character $\chi \in \Xset(K)$ and a place $v$, we write $\chi_v \in \Xset(K_v)$ for the restriction of $\chi$ to $K_v^\times$ via the local reciprocity map. Let $L = K(\sqrt{\O_{K, \Sigma}^\times})$. Let $v_0 \in \Sigma$ and $\psi_{v_0} \in \Xset(K_{v_0})$. We discuss an upper bound for $t_E$ from now on.

\begin{defn}
\label{mrlemma}
If $\l \notin \Sigma$, the composition map
$$
\xymatrix@R=5pt@C=15pt{
\Sel_2(E, \psi_{v_0}) \ar^-{\res_\l}[r] & \Hom_{ur}(G_{K_\l}, E[2]) \cong E[2] 
}
$$
is given by sending $c \in \Sel_2(E, \psi_{v_0}) \subset \Hom(G_K, E[2])$ to $c(\Frob_\l)$, where $\Frob_\l$ is a Frobenius automorphism at $\l$ (note that $\res_\l(c) \neq 0$ if and only if $c(\Frob_\l) \neq 0$).
\end{defn}

\begin{lem}
\label{linearalgebrasurj}
Suppose that $\phi_1, \cdots ,\phi_n$ are homomorphisms from $\Ftwo^m$ to $\Ftwo^2$ where $m = n + k$ and $1 \le k \le n$ such that $\cap_{i=1}^n \ker(\phi_i) = \{0\}$. Then there exist $i_1, \cdots, i_k$ such that $\phi_{i_1} \times \cdots \times \phi_{i_k}: \Ftwo^m \to (\Ftwo^2)^k$ sending $v \in \Ftwo^m$ to $(\phi_{i_1}(v), \cdots, \phi_{i_k}(v))$ is surjective. 
\end{lem}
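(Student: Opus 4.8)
The plan is to prove this by a greedy/inductive selection argument combined with a dimension count. The hypothesis $\bigcap_{i=1}^n \ker(\phi_i) = \{0\}$ with $\phi_i : \Ftwo^m \to \Ftwo^2$ and $m = n+k$ tells us that the combined map $\Phi = \phi_1 \times \cdots \times \phi_n : \Ftwo^m \to (\Ftwo^2)^n$ is injective, so its image has dimension $m = n+k$. I want to extract $k$ of the $n$ factors whose combined map is \emph{surjective} onto $(\Ftwo^2)^k$, i.e.\ has image of dimension $2k$. The natural approach is to build the index set $\{i_1,\dots,i_k\}$ one element at a time, at each stage choosing a new index that increases the rank of the partial product by $2$ (the maximum possible), so that after $k$ steps the rank is exactly $2k$.

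The key steps, in order: First I would set up the induction. Suppose for some $0 \le j < k$ we have chosen distinct indices $i_1,\dots,i_j$ such that $\psi_j := \phi_{i_1} \times \cdots \times \phi_{i_j} : \Ftwo^m \to (\Ftwo^2)^j$ is surjective; write $W_j = \bigcap_{l=1}^j \ker(\phi_{i_l})$, which then has dimension $m - 2j = n + k - 2j \ge n + k - 2(k-1) = n - k + 2 \ge 2$ (using $k \le n$). I want to find an index $i_{j+1}$ not among the chosen ones with $\phi_{i_{j+1}}|_{W_j}$ surjective onto $\Ftwo^2$; then $\psi_{j+1} = \psi_j \times \phi_{i_{j+1}}$ is surjective onto $(\Ftwo^2)^{j+1}$ by a standard fiber-product argument (given any target $(y, z)$, the fiber $\psi_j^{-1}(y)$ is a coset of $W_j$, and $\phi_{i_{j+1}}$ restricted to that coset is surjective onto $\Ftwo^2$ since it is surjective on $W_j$). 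Second, I would prove the existence of such $i_{j+1}$: the indices \emph{not yet chosen} number $n - j$, and if none of the corresponding $\phi_i|_{W_j}$ were surjective onto $\Ftwo^2$, I need to derive a contradiction with $\bigcap_{i=1}^n \ker(\phi_i)|_{W_j} = \{0\}$.

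The contradiction step is where the real content lies. The $j$ already-chosen maps vanish identically on $W_j$ by construction, so $\bigcap_{i=1}^n \ker(\phi_i) \cap W_j = \bigcap_{i \notin \{i_1,\dots,i_j\}} \ker(\phi_i|_{W_j}) = \{0\}$, an intersection of $n - j$ kernels inside $W_j$. If each such $\phi_i|_{W_j}$ is \emph{non}-surjective, then $\dim \ker(\phi_i|_{W_j}) \ge \dim W_j - 1$, i.e.\ each kernel has codimension at most $1$ in $W_j$. An intersection of $n - j$ subspaces each of codimension $\le 1$ has codimension $\le n - j$ in $W_j$, hence dimension $\ge \dim W_j - (n - j) = (n + k - 2j) - (n - j) = k - j \ge 1$ (since $j < k$). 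This contradicts the intersection being $\{0\}$. Hence at least one unchosen index gives a surjection on $W_j$, completing the inductive step; after $k$ steps we obtain the desired $i_1,\dots,i_k$.

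**The main obstacle** I anticipate is purely bookkeeping: keeping the codimension inequalities tight enough at every stage. The crucial numerical facts are that $\dim W_j = n + k - 2j$ stays $\ge 2$ throughout the relevant range (which uses $k \le n$ and $j \le k-1$) so that ``surjective onto $\Ftwo^2$'' is meaningful and the fiber-product argument applies, and that $k - j \ge 1$ for $j < k$ so that the hypothetical intersection is forced to be nonzero. Once these two inequalities are in hand the argument is routine; the rest is the standard observation that a product map $\psi_j \times \phi$ is surjective iff $\psi_j$ is surjective and $\phi$ restricted to $\ker(\psi_j)$ is surjective.
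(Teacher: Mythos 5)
Your proof is correct. It takes a somewhat different route from the paper's: the paper makes a single pass through the indices in their given order, setting $s_j = \dimtwo(\im(\phi_1\times\cdots\times\phi_j))$, observing that each increment is $0$, $1$, or $2$ while $s_n = m = n+k$ (injectivity of the full product), so at least $k$ indices produce a jump of $2$, and then selecting exactly those indices; the verification that the selected product is surjective (via the identity $s_j - s_{j-1} = \dimtwo(\phi_j(\bigcap_{l<j}\ker\phi_l))$ and the containment of that kernel in the intersection of the previously selected kernels) is left as ``easy to see.'' You instead build the index set greedily, maintaining surjectivity of the partial product, and establish the existence of the next usable index by contradiction: if every unchosen $\phi_i$ restricted to $W_j=\ker(\psi_j)$ were non-surjective, each restricted kernel would have codimension at most $1$ in $W_j$, so their intersection -- which equals $\bigcap_{i=1}^n\ker(\phi_i)=\{0\}$ since the chosen maps already vanish on $W_j$ -- would have dimension at least $(n+k-2j)-(n-j)=k-j\ge 1$, a contradiction. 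Both arguments rest on the same dimension count forced by injectivity, but yours trades the paper's brevity for an explicit inductive certificate of surjectivity at each step (the fiber-coset argument the paper suppresses), and it selects indices adaptively rather than canonically from the ordered jump sequence; a minor remark is that your side condition $\dim W_j\ge 2$ need not be checked separately, since it is implied once some $\phi_i|_{W_j}$ is shown to be surjective.
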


\begin{proof}
Define $s_j = \dimtwo(\im(\phi_{1} \times \cdots \times \phi_{j})).$ Then clearly $s_j = s_{j-1}$ or $s_{j} = s_{j-1} + 1$ or $s_{j} = s_{j-1} + 2$. Then there are at least $k$ many $j$ such that $s_{j} = s_{j-1} + 2$. Collect all $j$ such that $s_{j} = s_{j-1} + 2$ and name them $i_1 <  \cdots < i_k < \cdots$. Then it is easy to see $\phi_{i_1} \times \cdots \times \phi_{i_k}$ is surjective.
\end{proof}

\begin{prop}
\label{2n}
Let $v_0 \in \Sigma$ and $\psi_{v_0} \in \Xset(K_{v_0})$. Then $r_2(E, \psi_{v_0}) \le 2n$. 
\end{prop}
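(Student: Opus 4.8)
The plan is to bound $r_2(E,\psi_{v_0}) = \dimtwo \Sel_2(E,\psi_{v_0})$ by comparing it, via the restriction maps at a well-chosen finite set of primes outside $\Sigma$, to a Selmer group with no local freedom left at those primes, and then to iterate Corollary \ref{babo}. First I would recall that since $E[2]\subset E(K)$, for any prime $\l\notin\Sigma$ we have $\l\in\cP_2$, so by Remark \ref{localdim} $\dimtwo\alpha_\l(1_\l)=2$; moreover, by Definition \ref{mrlemma}, the restriction $\res_\l\colon \Sel_2(E,\psi_{v_0})\to \Hom_{\ur}(G_{K_\l},E[2])\cong E[2]\cong\Ftwo^2$ is evaluation at $\Frob_\l$. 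The key point is that the maps $\{\res_\l\}_{\l\notin\Sigma}$ are jointly injective on $\Sel_2(E,\psi_{v_0})$: an element $c$ lying in all the kernels is an unramified-everywhere class (it is already unramified outside $\Sigma$ by the Selmer conditions, and $c(\Frob_\l)=0$ for all such $\l$ forces $c$ to vanish on all of $G_K$ by Chebotarev, since $K(E[2])=K$ here and $c\in\Hom(G_K,E[2])$ factors through a finite quotient). Thus $\bigcap_{\l\notin\Sigma}\ker(\res_\l)=0$.

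Next I would pick a finite subset $T=\{\l_1,\dots,\l_N\}$ of primes outside $\Sigma$ such that $\bigcap_{i=1}^N\ker(\res_{\l_i})=0$ already — this is possible since $\Sel_2(E,\psi_{v_0})$ is finite-dimensional. I can moreover arrange $N=n$: if $N<n$ this is automatic (pad the list), and if more than $n$ primes are genuinely needed then $r_2(E,\psi_{v_0})>2n$ would be forced, which is exactly what we want to rule out — so the cleanest route is to argue by contradiction. Suppose $r_2(E,\psi_{v_0})=m>2n$. Then by the joint injectivity we need some primes $\l_1,\dots,\l_N$ with $\bigcap\ker(\res_{\l_i})=0$, and since each $\im(\res_{\l_i})$ has dimension $\le 2$, we need $N\ge \lceil m/2\rceil \ge n+1$. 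Now I invoke Corollary \ref{babo} repeatedly: modifying the local condition at a single prime $\l_i\notin S$ (from $\alpha_{\l_i}(1_{\l_i})$ to $\alpha_{\l_i}(\psi_i)$ for some ramified $\psi_i\in\Xset_\ram(K_{\l_i})$) changes $r_2$ by at most $\dimtwo\alpha_{\l_i}(1_{\l_i})=2$. Doing this at all $N$ primes, and using that after imposing ramified conditions at all of $T$ the resulting group $\Sel_{2,T}(E,\psi_{v_0})$ — more precisely $\Sel_2(E,\psi_1,\dots,\psi_N,\psi_{v_0})$, which by Lemma \ref{ramhv} equals $\Sel_{2,T}(E,\psi_{v_0})$ — is $0$ because a class in it is killed by $\res_T$ hence lies in $\bigcap\ker(\res_{\l_i})=0$, I get $r_2(E,\psi_{v_0})\le \sum_{i=1}^N\dimtwo\alpha_{\l_i}(1_{\l_i})$. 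That bound alone only gives $\le 2N$, which is too weak, so this naive iteration is not quite enough.

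The sharper estimate comes from a dimension-counting argument in the style of Lemma \ref{linearalgebrasurj}, applied to the maps $\phi_i=\res_{\l_i}\colon \Sel_2(E,\psi_{v_0})\cong \Ftwo^m\to\Ftwo^2$. The point is that the \emph{total} drop $\sum_i(\dim\im(\res_{\l_1}\times\cdots\times\res_{\l_i})-\dim\im(\res_{\l_1}\times\cdots\times\res_{\l_{i-1}}))$ telescopes to $m$, and each step drops the \emph{corank} of the running intersection by $0$, $1$ or $2$; since we need the final corank to be $0$ and start from $m$, we need the "$+2$" steps to occur enough times — but here I want the dual bound. The clean way: the global duality (Theorem \ref{ptd}) gives $\dimtwo\Sel_2^T(E,\psi_{v_0}) - \dimtwo\Sel_{2,T}(E,\psi_{v_0}) = \sum_{v\in T}\dimtwo\alpha_v(1_v) = 2N$, while $\Sel_2(E,\psi_{v_0})$ sits between $\Sel_{2,T}$ and $\Sel_2^T$. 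Choosing $T$ with $|T|=n$ so that $\res_T$ is surjective onto $\bigoplus_{v\in T}\alpha_v(1_v)\cong\Ftwo^{2n}$ (possible by the joint injectivity together with Lemma \ref{linearalgebrasurj}, taking $m\le 2n$ if that already holds, or deriving a contradiction otherwise) forces $\Sel_{2,T}(E,\psi_{v_0})$ to have dimension $r_2(E,\psi_{v_0})-2n$, which must be $\ge 0$ — wait, that gives the wrong inequality. The correct extraction: take $T$ of size $n$; then by Lemma \ref{unitdim} the target $\bigoplus_{v\in T}H^1(K_v,E[2])/\alpha_v(1_v)$ in the first sequence of Theorem \ref{ptd} has dimension $2n$, and $\Sel_2^T(E,\psi_{v_0})\subseteq \Sel_2(E,\psi_{v_0})+(\text{contribution of }2n\text{ freedoms})$, giving $r_2(E,\psi_{v_0})\le \dimtwo\Sel_{2,T}(E,\psi_{v_0})+2n$; combined with $\Sel_{2,T}(E,\psi_{v_0})=0$ for a suitable $T$ of size $n$, this yields $r_2(E,\psi_{v_0})\le 2n$. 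I expect the main obstacle to be precisely this bookkeeping — showing one can choose $T$ with $|T|=n$ simultaneously making $\res_T$ "large enough" and $\Sel_{2,T}(E,\psi_{v_0})$ vanish, i.e., correctly marrying the linear-algebra Lemma \ref{linearalgebrasurj} to the Poitou–Tate duality of Theorem \ref{ptd}; once the prime set is chosen correctly, the inequality drops out.
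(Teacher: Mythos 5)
There is a genuine gap at the pivotal step: you never establish that a set $T$ of exactly $n$ primes outside $\Sigma$ can be chosen with $\Sel_{2,T}(E,\psi_{v_0})=0$ (equivalently, with $\res_T$ injective on the Selmer group). Chebotarev plus the fact that Selmer classes are homomorphisms unramified outside $\Sigma$ only gives you \emph{some} finite detecting set, of size possibly as large as $r_2(E,\psi_{v_0})$; combined with Theorem \ref{ptd} that yields $r_2\le 2|T|$, which is vacuous. Your attempt to force $|T|=n$ via Lemma \ref{linearalgebrasurj} is circular, since that lemma requires $m=\dimtwo\Sel_2(E,\psi_{v_0})\le 2n$ — precisely the inequality being proved — and the promised ``contradiction otherwise'' is never actually derived. (Also, the dimension $2n$ of $\bigoplus_{v\in T}H^1(K_v,E[2])/\alpha_v(1_v)$ comes from $|T|=n$ and local duality at primes of $\cP_2$, not from Lemma \ref{unitdim}.)

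The missing ingredient is the structural fact the paper's proof rests on: since $E[2]\subset E(K)$, each nonzero $s\in\Sel_2(E,\psi_{v_0})\subset\Hom(G_K,E[2])$ cuts out a compositum of quadratic extensions of $K$ unramified outside $\Sigma$, and because $\Pic(\O_{K,\Sigma})=1$ every such quadratic extension is $K(\sqrt{a})$ with $a\in\O_{K,\Sigma}^\times$; hence $\overline{K}^{\ker(s)}\subseteq L=K(\sqrt{\O_{K,\Sigma}^\times})$ and $\Sel_2(E,\psi_{v_0})\subseteq\Hom(\Gal(L/K),E[2])$, which has $\Ftwo$-dimension $2n$ by Lemma \ref{unitdim}. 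This gives the bound directly, with no Poitou--Tate input at all. If you insist on your duality route, this same fact is what rescues it: taking $\l_1,\dots,\l_n$ with $\Frob_{\l_i}=\sigma_i$ (a basis of $\Gal(L/K)$ dual to a basis of $\O_{K,\Sigma}^\times/(\O_{K,\Sigma}^\times)^2$) makes $\bigcap_i\ker(\res_{\l_i})=0$ with exactly $n$ primes, and then your inequality $r_2(E,\psi_{v_0})\le\dimtwo\Sel_{2,T}(E,\psi_{v_0})+2n$ closes the argument — but at that point the duality is superfluous.
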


\begin{proof}
Clearly, we have $\Sel_2(E, \psi_{v_0}) \subseteq \Hom(G_K, E[2])$. For all nonzero $s \in \Sel_2(E, \psi_{v_0})$, we claim that $\overline{K}^{\ker(s)} \subseteq L = K(\sqrt{\O_{K,\Sigma}^\times})$. Indeed, for any quadratic extension $K(\sqrt{a})/K$, where all primes not in $\Sigma$ are unramified, one can replace $a$ with an element in $\O_{K,\Sigma}^\times$ because $\Pic(O_{K,\Sigma}) =1$. Now the claim follows easily once we note that $\overline{K}^{\ker(s)}$ is a compositum of (possibly the same) quadratic extensions, where all primes not in $\Sigma$ are unramified.
 Therefore, $\Sel_2(E, \psi_{v_0}) \subseteq \Hom(\Gal(L/K), E[2])$ by the Inflation-Restriction Sequence. By Lemma \ref{unitdim}, $\dimtwo(\Gal(L/K)) =n$, whence the result. 
\end{proof}

\begin{thm}
\label{upperbound}
Suppose $E[2] \subset E(K)$. If $r_2(E, \psi_{v_0}) = n + k$ for $2 \le k \le n$, then there exist $E^\chi$ such that $r_2(E^\chi) = n - k + 2$. In particular $t_E \le n+1$.  
\end{thm}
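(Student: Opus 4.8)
The plan is to reduce the statement to a rank-drop construction controlled by the linear-algebra lemma just proved. Suppose $r_2(E,\psi_{v_0}) = n+k$ with $2 \le k \le n$. By Proposition \ref{2n} we have $r_2(E,\psi_{v_0}) \le 2n$, so $m := n+k \le 2n$ and the hypotheses of Lemma \ref{linearalgebrasurj} are met with this $m$ and this $k$. First I would fix a basis $\beta_1,\dots,\beta_n$ of $\O_{K,\Sigma}^\times/(\O_{K,\Sigma}^\times)^2$, so that $\Sel_2(E,\psi_{v_0}) \subseteq \Hom(\Gal(L/K),E[2]) \cong \Ftwo^m$ via the argument in Proposition \ref{2n}, where $L = K(\sqrt{\O_{K,\Sigma}^\times})$. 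For each $i$, Chebotarev gives infinitely many primes $\l_i \in \cP_2$, unramified in $L/K$, with prescribed Frobenius in $\Gal(L/K)$; by Definition \ref{mrlemma} the restriction map $\res_{\l_i}\colon \Sel_2(E,\psi_{v_0}) \to \Hom_{\ur}(G_{K_{\l_i}},E[2]) \cong E[2] \cong \Ftwo^2$ is evaluation at $\Frob_{\l_i}$, hence depends only on that Frobenius class, i.e. it is a linear functional $\phi_i$ determined by the chosen conjugacy class. Choosing $n$ such classes whose associated $\phi_1,\dots,\phi_n$ have $\bigcap_i \ker(\phi_i) = \{0\}$ is possible because $\bigcap_i \ker(\res_{\l_i})$ over a sufficiently rich supply of primes is $\Sel_{2,\text{all}}(E,\psi_{v_0})$, which vanishes (a class killed by all restrictions, in particular restricting to $0$ at every split prime, is $0$ by the injectivity in Lemma \ref{resinjlem} type reasoning together with $\Pic(\O_{K,\Sigma}) = 1$).

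Next I would apply Lemma \ref{linearalgebrasurj} to extract indices $i_1,\dots,i_k$ so that $\phi_{i_1}\times\cdots\times\phi_{i_k}\colon \Sel_2(E,\psi_{v_0}) \to (\Ftwo^2)^k$ is surjective, i.e. $\res_T\colon \Sel_2(E,\psi_{v_0}) \to \bigoplus_{j=1}^k \alpha_{\l_{i_j}}(1_{\l_{i_j}})$ is surjective, where $T = \{\l_{i_1},\dots,\l_{i_k}\}$ and each $\alpha_{\l_{i_j}}(1) = H^1_{\ur}(K_{\l_{i_j}},E[2]) \cong \Ftwo^2$ since $\l_{i_j} \in \cP_2$. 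Here I must be a little careful: the $\l_i$ were chosen one at a time, but since there are infinitely many primes in each Frobenius class I can choose them distinct, so $T$ is a genuine $k$-element set disjoint from $\Sigma$. Now for each $j$ pick $\psi_j \in \Xset_{\ram}(K_{\l_{i_j}})$. Corollary \ref{ressur}, applied with this $T$, $v_0$, and $\psi_{v_0}$, gives
\[
\Sel_2(E,\psi_1,\dots,\psi_k,\psi_{v_0}) = \Sel_{2,T}(E,\psi_{v_0}),
\]
and Theorem \ref{ptd} with this same $T$ gives
\[
\dimtwo(\Sel_2^T(E,\psi_{v_0})) - \dimtwo(\Sel_{2,T}(E,\psi_{v_0})) = \sum_{j=1}^k \dimtwo(\alpha_{\l_{i_j}}(1)) = 2k.
\]
Combining with the first part of Corollary \ref{ressur}, $\Sel_2^T(E,\psi_{v_0}) = \Sel_2(E,\psi_{v_0})$ (surjectivity of $\res_T$ onto $\bigoplus \alpha_v(1_v)$ forces, by orthogonality, the image in $\bigoplus H^1(K_v,E[2])/\alpha_v(1_v)$ to vanish). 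Therefore
\[
r_2(E,\psi_1,\dots,\psi_k,\psi_{v_0}) = \dimtwo(\Sel_{2,T}(E,\psi_{v_0})) = r_2(E,\psi_{v_0}) - 2k = (n+k) - 2k = n-k.
\]

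Finally I would promote this to an honest quadratic twist and correct the count by $2$. The character data $(\psi_{v_0}$ at $v_0$, trivial at $\Sigma \setminus \{v_0\}$, $\psi_j$ ramified at $\l_{i_j}$, unramified elsewhere$)$ may fail to come from a global character, but this is exactly what Lemma \ref{idea} is for: choosing $T = \{\l_{i_1},\dots,\l_{i_k}\}$ there (note $T \cap \Sigma = \varnothing$), there is an auxiliary prime $\l$ and a global $\chi \in \Xset(K)$ with $\chi_{v_0} = \psi_{v_0}$, trivial on $\Sigma \setminus \{v_0\}$, ramified at each $\omega \in T$, unramified outside $\Sigma \cup T \cup \{\l\}$, and ramified at $\l$. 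By Lemma \ref{localzero} the local conditions of $\Sel_2(E^\chi)$ agree with those of $\Sel_2(E,\psi_1,\dots,\psi_k,\psi_{v_0})$ everywhere except possibly at the extra prime $\l \in \cP_2$; at $\l$, Corollary \ref{babo} gives $|r_2(E^\chi) - r_2(E,\psi_1,\dots,\psi_k,\psi_{v_0})| \le 2$ and Theorem \ref{parity} with Lemma \ref{ramhv} (for $\l \in \cP_2$, $h_\l = 2$) pins the difference to $0$ or $\pm 2$. To force it to be exactly $+2$ (giving $r_2(E^\chi) = n-k+2$), I would arrange, just as in the proof of Theorem \ref{inc2} via Lemma \ref{4torsion}, that the auxiliary $\l$ also satisfies $E[4] \subset E(K_\l)$ and $\Frob_\l$ kills the relevant Selmer classes so that a rational $2$-torsion point acquires nonzero local Kummer image at $\l$ for $E^\chi$; this is compatible with the Chebotarev choice in Lemma \ref{idea} since we only add finitely many split conditions. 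Then $r_2(E^\chi) \ge \dimtwo(\Sel_{2,\l}(E^\chi)) + 1$ forces the $+2$ case, yielding $r_2(E^\chi) = n-k+2$. The ``in particular'' follows: if $t_E \le n$ we are done since $n \le n+1$; otherwise $t_E = r_2(E,1_{v_0}) \ge n+1$ forces $k \ge 1$, and if $k \ge 2$ the construction produces a twist of rank $n-k+2 \le n < t_E$, a contradiction, so $k \le 1$ and $t_E \le n+1$. The main obstacle I anticipate is the bookkeeping in the last paragraph: ensuring that the single global character $\chi$ from Lemma \ref{idea} simultaneously realizes the ramified local conditions at $T$, leaves the $\Sigma$-conditions untouched, and that the unavoidable extra ramified prime $\l$ contributes exactly $+2$ rather than $0$ — this requires threading the $E[4]$-splitting hypothesis through the Chebotarev argument, which is delicate but parallels the end of the proof of Theorem \ref{inc2}.
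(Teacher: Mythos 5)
Your construction agrees with the paper's proof up to and including the drop by $2k$: the embedding $\Sel_2(E,\psi_{v_0})\subseteq\Hom(\Gal(L/K),E[2])$ from Proposition \ref{2n}, the choice of $n$ evaluation functionals with trivial common kernel (the paper does this by evaluating at the dual generators $\sigma_1,\dots,\sigma_n$ of $\Gal(L/K)$, which is the clean way to justify your $\bigcap_i\ker(\phi_i)=\{0\}$), Lemma \ref{linearalgebrasurj}, Corollary \ref{ressur} with Theorem \ref{ptd} to get $r_2(E,\psi_1,\dots,\psi_k,\psi_{v_0})=n-k$, and Lemma \ref{idea} to globalize. The gap is in your final step, where you try to force the auxiliary prime $\l$ of Lemma \ref{idea} to contribute exactly $+2$ by additionally requiring $E[4]\subset E(K_\l)$ and that $\Frob_\l$ kill the relevant Selmer classes. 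You assert this is ``compatible with the Chebotarev choice in Lemma \ref{idea} since we only add finitely many split conditions,'' but that is not automatic and can fail: the condition \eqref{choiceofq} prescribes $\Frob_\l$ on $L=K(\sqrt{\O_{K,\Sigma}^\times})$, and it may force $\Frob_\l(\sqrt{\beta_i})=-\sqrt{\beta_i}$ for some $i$; meanwhile $K(E[4])$ (and likewise the field cut out by the Selmer classes) typically meets $L$ nontrivially --- with $E[2]\subset E(K)$ the differences of the $2$-torsion $x$-coordinates are $\Sigma$-units up to squares, and their square roots lie in $K(E[4])$ --- so demanding that $\l$ split in those fields can directly contradict \eqref{choiceofq}. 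That such compatibility genuinely needs an argument is visible in the paper's proof of Theorem \ref{realplace}, Case 3, which splits into subcases precisely according to whether the auxiliary field sits inside $L$.

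The paper sidesteps this entirely: it makes no attempt to control the contribution of $\l$, accepts that $r_2(E^\chi)\in\{n-k-2,\,n-k,\,n-k+2\}$ (Theorem \ref{parity}, Lemma \ref{ramhv}, Corollary \ref{babo}), and then applies Theorem \ref{inc2} repeatedly to the genuine quadratic twist $E^\chi$ to increase the rank by $2$ until it equals exactly $n-k+2$. Replacing your last paragraph's $E[4]$/Selmer-killing conditions by this use of Theorem \ref{inc2} repairs the argument with no extra Chebotarev bookkeeping. One further small point: in your ``in particular'' step you write $t_E=r_2(E,1_{v_0})$, which conflates the minimum over all twists with the rank of $E$ itself; the intended argument is simply that $r_2(E)\le 2n$ by Proposition \ref{2n}, so either $r_2(E)\le n+1$ (whence $t_E\le n+1$ since $r_2(E)\in A_E$) or $r_2(E)=n+k$ with $2\le k\le n$ and the main statement yields a twist of rank $n-k+2\le n$.
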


\begin{proof}
 Let $\beta_1,  \cdots, \beta_n$ be a basis of $\O_{K,\Sigma}^\times/(\O_{K,\Sigma}^\times)^2$. Let $L = K(\sqrt{\beta_1}, \cdots, \sqrt{\beta_n})$. Define $\sigma_i \in \Gal(L/K)$ so that $\sigma_i(\sqrt{\beta_i}) = -\sqrt{\beta_i}$ and $\sigma_i(\sqrt{\beta_j}) = \sqrt{\beta_j}$ for $j \neq i$. Note that an element $s \in \Sel_2(E, \psi_{v_0})$ is determined by $s(\sigma_1), \cdots, s(\sigma_n) \in E[2]$. Define $t_i \in \Hom(\Sel_2(E, \psi_{v_0}), E[2])$ sending $s \in \Sel_2(E, \psi_{v_0})$ to $s(\sigma_i)$. Applying Lemma \ref{linearalgebrasurj}, without loss of generality, we may assume $t_1 \times \cdots \times t_k$ is a surjection from $\Sel_2(E, \psi_{v_0})$ to $E[2]^k$. In other words, there exist $s_{2i-1}, s_{2i}$ for $0 \le i \le k$ such that 
\begin{itemize}
\item
$s_{2i-1}(\sigma_i) = P_1$ and $s_{2i}(\sigma_i) = P_2,$ where $P_1, P_2 \in E[2]$ is a basis of $E[2]$, 
\item
$s_{2i-1}(\sigma_j) = s_{2i}(\sigma_j) = 0$ for $1 \le j \neq i \le k$. 
\end{itemize}
For $ 1 \le i \le k$, let $\omega_i \in \cP_2$ be a prime such that $\Frob_{\omega_i} = \sigma_i$ in $\Gal(L/K)$.  Then by Definition \ref{mrlemma} we have that
\begin{enumerate}
\item
$\res_{\omega_i}(s_{2i-1})$ and $\res_{\omega_i}(s_{2i})$ generate $\alpha_{\omega_i}(1_{\omega_i}) = \Hom_{\ur}(G_{K_{\omega_i}}, E[2])$, and
\item
$\res_{\omega_j}(s_{2i-1}) = \res_{\omega_j}(s_{2i}) = 0$ for $ 1 \le j \neq i \le k$. 
\end{enumerate}
Let $T= \{\omega_1, \cdots, \omega_k\}.$ Let $\psi_i \in \Xset_{\ram}(K_{\omega_i})$. Then by Corollary \ref{ressur} and Theorem \ref{ptd}, we have $\Sel_2(E, \psi_{v_0}) = \Sel_2^T(E, \psi_{v_0})$ and
\begin{equation}
\label{good}
r_2(E, \psi_1, \cdots, \psi_k, \psi_{v_0}) = r_2(E, \psi_{v_0})- 2k.  
\end{equation}
By Lemma \ref{idea}, there exist $\l \in \cP_2 \backslash T$ and $\chi \in \Xset(K)$ so that 
\begin{itemize}
\item
$\chi_{v_0} = \psi_{v_0}$
\item
$\chi_{v} = 1_v$ for $v \in \Sigma \backslash \{v_0\}$,
\item
$\chi_\omega$ is ramified for $\omega \in T$, 
\item
$\chi_{\p}$ is unramified for $\p \notin \Sigma \cup T \cup \{\l\}$, and
\item
$\chi_\l$ is ramified.
\end{itemize}
Then $\Sel_2(E^\chi) = \Sel_2(E, \chi_{\omega_1}, \cdots, \chi_{\omega_k}, \psi_{v_0}, \chi_{\l})$. Theorem \ref{parity}, Lemma \ref{ramhv}, and Corollary \ref{babo} show 
$$
|r_2(E, \chi_{\omega_1}, \cdots, \chi_{\omega_k}, \psi_{v_0}, \chi_{\l})  - r_2(E, \chi_{\omega_1}, \cdots, \chi_{\omega_k}, \psi_{v_0})|
$$
is even and less than or equal to $2$, so by \eqref{good}, we have $r_2(E^\chi) = r_2(E, \psi_{v_0}) - 2k - 2$ or $r_2(E, \psi_{v_0}) - 2k$ or $r_2(E, \psi_{v_0}) - 2k + 2$. In any case, by Theorem \ref{inc2}, there exist infinitely many $\varphi \in \Xset(K)$ such that $r_2(E^{\varphi}/K) = r_2(E, \psi_{v_0}) - 2k + 2 = n + k - 2k + 2 < n + 1$. Proposition \ref{2n} with putting $\psi_{v_0} = 1_{v_0}$ shows that $t_E \le n+1$. 
\end{proof}

\begin{lem}
\label{finallemma}
Suppose that there exist $c_1, c_2 \in \Sel_2(E, \psi_{v_0})$ such that $\res_{\omega}(c_1)$ and $\res_{\omega}(c_2)$ generate $\alpha_{\omega}(1_{\omega}) = \Hom_{\ur}(G_{\omega}, E[2])$ for some prime $\omega \notin \Sigma.$ Then there exist infinitely many $\varphi \in \Xset(K)$ such that $r_2(E^\varphi/K) = r_2(E, \psi_{v_0})$. 
\end{lem}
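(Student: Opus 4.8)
The hypothesis gives us a prime $\omega \notin \Sigma$ and elements $c_1, c_2 \in \Sel_2(E, \psi_{v_0})$ whose restrictions at $\omega$ span $\alpha_\omega(1_\omega) = \Hom_\ur(G_{K_\omega}, E[2])$. The plan is to first kill the contribution of $\omega$ by imposing a ramified local condition there, and then to run the same argument as in Theorem \ref{upperbound}: introduce one more ramified prime $\l$ via Lemma \ref{idea}, translate the resulting relative Selmer group to an honest quadratic twist by Proposition \ref{lastcase} (or directly by Lemma \ref{idea}), and pin down the change in rank using Theorem \ref{parity}, Lemma \ref{ramhv}, and Corollary \ref{babo}. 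The surjectivity hypothesis at $\omega$ is exactly what is needed to apply Corollary \ref{ressur} with $T = \{\omega\}$: since $\res_\omega\colon \Sel_2(E, \psi_{v_0}) \to \alpha_\omega(1_\omega)$ is onto, we get $\Sel_2(E, \psi_{v_0}) = \Sel_2^\omega(E, \psi_{v_0})$ and, for $\psi_\omega \in \Xset_\ram(K_\omega)$, the identity $\Sel_2(E, \psi_\omega, \psi_{v_0}) = \Sel_{2,\omega}(E, \psi_{v_0})$, whence by Theorem \ref{ptd}
$$
r_2(E, \psi_\omega, \psi_{v_0}) = r_2(E, \psi_{v_0}) - 2.
$$

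Next I would invoke Lemma \ref{idea} with $T = \{\omega\}$ and the same $v_0, \psi_{v_0}$: this produces infinitely many primes $\l \notin \Sigma \cup \{\omega\}$ and a global character $\chi \in \Xset(K)$ with $\chi_{v_0} = \psi_{v_0}$, $\chi_v = 1_v$ for $v \in \Sigma \setminus \{v_0\}$, $\chi_\omega$ ramified, $\chi_\p$ unramified for $\p \notin \Sigma \cup \{\omega, \l\}$, and $\chi_\l$ ramified. By Lemma \ref{localzero} the local conditions of $\Sel_2(E^\chi)$ agree with those of $\Sel_2(E, \chi_\omega, \psi_{v_0})$ everywhere except possibly at $\l$, so $\Sel_2(E^\chi) = \Sel_2(E, \chi_\omega, \psi_{v_0}, \chi_\l)$. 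Since $\l \in \cP_2$ and $\chi_\l$ is ramified, Corollary \ref{babo} gives $|r_2(E^\chi) - r_2(E, \chi_\omega, \psi_{v_0})| \le 2$, and Theorem \ref{parity} with Lemma \ref{ramhv} forces this difference to be even. Combining with the displayed identity above, $r_2(E^\chi)$ is one of $r_2(E, \psi_{v_0}) - 4$, $r_2(E, \psi_{v_0}) - 2$, or $r_2(E, \psi_{v_0})$.

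To reach exactly $r_2(E, \psi_{v_0})$, I would show that the ``$+2$'' case at $\l$ always occurs, i.e.\ that one can arrange $\res_\l$ to be surjective onto $\alpha_\l(1_\l)$ restricted to $\Sel_2(E, \chi_\omega, \psi_{v_0})$'s ambient group; this is where the flexibility in the choice of $\l$ in Lemma \ref{idea} is used, exactly as in the proof of Theorem \ref{upperbound2} (one chooses the Frobenius of $\l$ in $\Gal(L/K)$ appropriately so that the elements $c_1, c_2$, or suitable modifications, have independent nonzero images at $\l$). Then $\Sel_2(E, \chi_\omega, \psi_{v_0}) = \Sel_2^\l(E, \chi_\omega, \psi_{v_0})$ and Theorem \ref{ptd} applied with $T = \{\l\}$ gives $r_2(E, \chi_\omega, \psi_{v_0}, \chi_\l) = r_2(E, \chi_\omega, \psi_{v_0}) + 2 = r_2(E, \psi_{v_0})$, i.e.\ $r_2(E^\chi) = r_2(E, \psi_{v_0})$. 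Finally, since Lemma \ref{idea} yields infinitely many admissible $\l$ (hence infinitely many $\chi$) and distinct such $\chi$ give genuinely distinct twists, we obtain infinitely many $\varphi$ with $r_2(E^\varphi) = r_2(E, \psi_{v_0})$, as claimed. The main obstacle is the last step: one must verify that the two elements of $\Sel_2(E, \chi_\omega, \psi_{v_0})$ needed to make $\res_\l$ surjective actually exist and survive after imposing the ramified condition at $\omega$ — concretely, that imposing $\chi_\omega$ ramified does not destroy the ability to separate $P_1, P_2 \in E[2]$ at a well-chosen $\l$; this should follow by the linear-algebra bookkeeping of Lemma \ref{linearalgebrasurj} together with the observation that $c_1, c_2$ (which do not lie in $\Sel_{2,\omega}$ since their images at $\omega$ span $\alpha_\omega(1_\omega)$) can be adjusted by elements of $\Sel_{2,\omega}(E, \psi_{v_0})$ to control their behaviour at the auxiliary prime.
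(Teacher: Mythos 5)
Your first two paragraphs are correct and run parallel to the paper's computation: Corollary \ref{ressur} at $\omega$ gives $r_2(E,\chi_\omega,\psi_{v_0})=r_2(E,\psi_{v_0})-2$, and then Corollary \ref{babo}, Theorem \ref{parity} and Lemma \ref{ramhv} at $\l$ give $r_2(E^\chi)\in\{r_2(E,\psi_{v_0})-4,\ r_2(E,\psi_{v_0})-2,\ r_2(E,\psi_{v_0})\}$. The genuine gap is your final step, and it fails for a concrete reason: the mechanism you propose is backwards. If $\res_\l$ were surjective from $\Sel_2(E,\chi_\omega,\psi_{v_0})$ onto $\alpha_\l(1_\l)$, then Corollary \ref{ressur}(ii) together with Theorem \ref{ptd} gives $\Sel_2(E,\chi_\omega,\psi_{v_0},\chi_\l)=\Sel_{2,\l}(E,\chi_\omega,\psi_{v_0})$, i.e.\ the rank \emph{drops} by $2$ at $\l$; in particular the inference ``$\Sel_2(E,\chi_\omega,\psi_{v_0})=\Sel_2^\l(E,\chi_\omega,\psi_{v_0})$, hence $r_2(E,\chi_\omega,\psi_{v_0},\chi_\l)=r_2(E,\chi_\omega,\psi_{v_0})+2$'' is a non sequitur, since $\Sel_2(E,\chi_\omega,\psi_{v_0},\chi_\l)\subseteq\Sel_2^\l(E,\chi_\omega,\psi_{v_0})$. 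To land in the $+2$ case one would instead need $\res_\l$ to vanish on $\Sel_2(E,\chi_\omega,\psi_{v_0})$ and, in addition, the relaxed group $\Sel_2^\l(E,\chi_\omega,\psi_{v_0})$ to restrict into $\alpha_\l(\chi_\l)$; arranging this is exactly the compatibility problem you flag at the end and do not resolve, because $\Frob_\l$ is already pinned down on $K(\sqrt{\O_{K,\Sigma}^\times})$ by the congruences \eqref{choiceofq} of Lemma \ref{idea} and cannot also be freely prescribed on the fields cut out by Selmer elements.

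The missing idea is that no such forcing is needed. The paper never produces a twist of rank exactly $r_2(E,\psi_{v_0})$ in one shot: working with $S=\{\omega,\l\}$ it shows only that $r_2(E^\chi)\le r_2(E,\psi_{v_0})$ and $r_2(E^\chi)\equiv r_2(E,\psi_{v_0})\pmod 2$ (the elements $c_1,c_2$ give both a $2$-dimensional quotient of $\Sel_2(E,\psi_{v_0})$ modulo $\Sel_{2,S}$ and a $2$-dimensional excess of $\Sel_2^S(E^\chi)$ over $\Sel_2(E^\chi)$, while Theorem \ref{ptd} bounds the total gap by $4$), and then applies Theorem \ref{inc2} repeatedly to the honest twist $E^\chi$ to climb up in steps of $2$ until the rank equals $r_2(E,\psi_{v_0})$. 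Your trichotomy already contains the needed inequality and parity statement, so your argument is repaired simply by deleting the last paragraph and finishing with Theorem \ref{inc2}; as written, however, the final step is incorrect.
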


\begin{proof}
Let $T= \{\omega\}$. By Lemma \ref{idea}, there exist infinitely many $\l \notin \Sigma \cup T$ for which there exists a character $\chi \in \Xset(K)$ such that
\begin{itemize}
\item
$\chi_{v_0} = \psi_{v_0}$,
\item
$\chi_{v} = 1_v$ for $v \in \Sigma \backslash \{v_0\}$,
\item
$\chi_{\omega}$ is ramified,
\item
$\chi_{\p}$ is unramified for all $\p \notin \Sigma \cup T \cup \{\l\}$, and
\item
$\chi_\l$ is ramified.
\end{itemize}
Note that $\Sel_2(E^\chi) = \Sel_2(E, \psi_{v_0}, \chi_{\omega}, \chi_\l)$ by Lemma \ref{localzero}.
Let $S = \{\omega, \l \}$. Then $r_2(E, \psi_{v_0}) \ge \dimtwo(\Sel_{2, S}(E^\chi)) + 2$, since by the condition on $c_1, c_2, \omega$ the following map is surjective 
$$
\res_{\omega} : \Sel_2(E, \psi_{v_0})/\Sel_{2, S}(E^\chi) \to \Hom_{\ur}(G_{K_{\omega}}, E[2]).
$$
Note that $c_1, c_2, c_1+c_2 \in \Sel_2^S(E^\chi) \backslash \Sel_2(E^\chi)$ since $\alpha_{\omega}(1_{\omega}) \cap \alpha_{\omega}(\chi_{\omega}) = \{0\}$ (Lemma \ref{ramhv}). Therefore $\dimtwo(\Sel_2^S(E^\chi)) \ge r_2(E^\chi) + 2$.
Theorem \ref{ptd} shows that $\dimtwo(\Sel_2^S(E^\chi))- \dimtwo(\Sel_{2, S}(E^\chi)) = 4$. Then it follows that $r_2(E, \psi_{v_0}) \ge r_2(E^\chi)$ and $r_2(E, \psi_{v_0}) \equiv r_2(E^\chi) \text{ (mod $2$)}$ by Theorem \ref{parity}. Then the assertion follows from Theorem \ref{inc2}. 
\end{proof}

\begin{thm}
\label{upperbound2}
If $E$ does not satisfy the constant $2$-Selmer parity condition, then $t_E \le n$.
\end{thm}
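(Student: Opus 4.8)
The plan is to sharpen the bound $t_E\le n+1$ from Theorem~\ref{upperbound} by one unit, using the failure of the constant $2$-Selmer parity condition to force a modified Selmer rank of the ``wrong'' parity at some place of $\Sigma$. First the reductions: by Proposition~\ref{2n} we have $r_2(E,\psi_v)\le 2n$ for every $v\in\Sigma$ and $\psi_v\in\Xset(K_v)$, and by Theorem~\ref{upperbound}, if some such rank equals $n+k$ with $2\le k\le n$ then already $t_E\le n-k+2\le n$. So I may assume $r_2(E,\psi_v)\le n+1$ for all $v\in\Sigma$ and $\psi_v$; taking $\psi_v=1_v$ gives $r_2(E)\le n+1$, and I may assume $r_2(E)=n+1$ (otherwise $t_E\le r_2(E)\le n$). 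Note $n\ge 2$, since $\Sigma$ contains an infinite place and a place above $2$.

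Next I would locate the useful place. Since $E$ fails the constant $2$-Selmer parity condition, some twist $E^\chi$ has $r_2(E^\chi)\not\equiv r_2(E)\pmod 2$, hence $\sum_v h_v(\chi_v)$ is odd by Theorem~\ref{parity}. Because $E[2]\subset E(K)$, every $v\notin\Sigma$ lies in $\cP_2$, so $h_v(\chi_v)\in\{0,2\}$ by Lemma~\ref{localzero} and Lemma~\ref{ramhv}; thus $\sum_{v\in\Sigma}h_v(\chi_v)$ is odd, and $h_{v_0}(\psi_{v_0})$ is odd for some $v_0\in\Sigma$, where $\psi_{v_0}:=\chi_{v_0}$. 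By Theorem~\ref{parity} again $r_2(E,\psi_{v_0})\not\equiv n+1\pmod 2$, which together with $r_2(E,\psi_{v_0})\le n+1$ gives $r_2(E,\psi_{v_0})\le n$; and if $r_2(E,\psi_{v_0})\le n-2$ then Proposition~\ref{idea2}(ii) (applicable since $\Delta_E$ is a square, so $\psi_{v_0}(\Delta_E)=1$) yields infinitely many $\varphi$ with $r_2(E^\varphi)=r_2(E,\psi_{v_0})+2\le n$, and we are done. So I am reduced to the case $r_2(E,\psi_{v_0})=n$.

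Now I would realize a twist of rank $n$. If $\psi_{v_0}(\O_{K,\Sigma}^\times)=1$, then Proposition~\ref{lastcase} gives $\chi'$ with $\Sel_2(E^{\chi'})=\Sel_2(E,\psi_{v_0})$, so $t_E\le r_2(E,\psi_{v_0})=n$. Otherwise I would apply Lemma~\ref{finallemma}: since $E[2]\subset E(K)$ we have $\Sel_2(E,\psi_{v_0})\subseteq\Hom(\Gal(L/K),E[2])$ with $L=K(\sqrt{\O_{K,\Sigma}^\times})$, and for $\omega\notin\Sigma$ the map $\res_\omega$ is evaluation at $\Frob_\omega$ (Definition~\ref{mrlemma}). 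A short linear-algebra argument in the spirit of Lemma~\ref{linearalgebrasurj}, using $\dimtwo\Sel_2(E,\psi_{v_0})=n\ge 2$, shows that there is $\sigma\in\Gal(L/K)$ with $\{c(\sigma):c\in\Sel_2(E,\psi_{v_0})\}=E[2]$, unless $\Sel_2(E,\psi_{v_0})=\Hom(\Gal(L/K),\langle P\rangle)$ for some $P\in E(K)[2]\setminus\{0\}$. In the former case, picking $c_1,c_2\in\Sel_2(E,\psi_{v_0})$ with $c_1(\sigma),c_2(\sigma)$ a basis of $E[2]$ and $\omega\notin\Sigma$ with $\Frob_\omega|_L=\sigma$, Lemma~\ref{finallemma} produces infinitely many $\varphi$ with $r_2(E^\varphi)=r_2(E,\psi_{v_0})=n$, so $t_E\le n$.

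I expect the remaining, degenerate case to be the main obstacle: $\psi_{v_0}(\O_{K,\Sigma}^\times)\ne 1$ and $\Sel_2(E,\psi_{v_0})=\Hom(\Gal(L/K),\langle P\rangle)$ with $P\in E(K)[2]$, where no localization surjects onto $E[2]$ and Lemma~\ref{finallemma} does not apply as stated. Here the leverage is that $r_2(E)=n+1>n=\dimtwo\Hom(\Gal(L/K),\langle P\rangle)$, so $\Sel_2(E)=\Sel_2(E,1_{v_0})$ is \emph{not} contained in any $\Hom(G_K,\ell)$; comparing $\Sel_2(E)$ and $\Sel_2(E,\psi_{v_0})$ through their common subgroup $\Sel_{2,v_0}(E)$ and the overgroup $\Sel_2^{v_0}(E)$ should force enough structure (in particular a class in a suitable Selmer group whose localization is nonzero) to run a variant of the argument of Lemma~\ref{finallemma} with the one-dimensional local condition attached to the $2$-isogeny $E\to E/\langle P\rangle$, at a prime $\omega\in\cP_2$ and one auxiliary prime as in Lemma~\ref{idea}, again landing at a twist of rank $\le n$ and hence $t_E\le n$.
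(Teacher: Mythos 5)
Your reductions (via Proposition~\ref{2n} and Theorem~\ref{upperbound} to the case $r_2(E)=n+1$, the parity argument producing $v_0\in\Sigma$ and $\psi_{v_0}$ with $r_2(E,\psi_{v_0})\equiv n\pmod 2$, the use of Proposition~\ref{idea2}(ii) when $r_2(E,\psi_{v_0})\le n-2$, and Proposition~\ref{lastcase} when $\psi_{v_0}(\O_{K,\Sigma}^\times)=1$) are correct and essentially the same as the paper's. The problem is the last case, $r_2(E,\psi_{v_0})=n$ with $\psi_{v_0}(\O_{K,\Sigma}^\times)\neq 1$: there you leave a genuine gap. Your dichotomy (either some $\sigma\in\Gal(L/K)$ has full evaluation image $E[2]$, or $\Sel_2(E,\psi_{v_0})=\Hom(\Gal(L/K),\langle P\rangle)$) is in fact true, but it is not just ``in the spirit of'' Lemma~\ref{linearalgebrasurj} --- it needs the covering-by-three-index-two-subgroups argument forcing every nonzero class to have rank one and a common image line --- and, more seriously, in the ``degenerate'' case you only offer a speculative sketch (comparing $\Sel_2(E)$ and $\Sel_2(E,\psi_{v_0})$ and invoking an unconstructed variant of Lemma~\ref{finallemma} with $\phi$-Selmer conditions for the $2$-isogeny $E\to E/\langle P\rangle$). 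Nothing in that sketch is carried out, so as written the theorem is not proved in exactly the case you yourself flag as the main obstacle.

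What you are missing is that the degenerate case is the \emph{easy} one, and the right tool is Proposition~\ref{idea2}(iii), not Lemma~\ref{finallemma}. Since $\psi_{v_0}(\O_{K,\Sigma}^\times)\neq 1$, choose a basis $\beta_1,\ldots,\beta_n$ of $\O_{K,\Sigma}^\times/(\O_{K,\Sigma}^\times)^2$ with $\psi_{v_0}(\beta_1)=-1$ and $\psi_{v_0}(\beta_i)=1$ for $i\ge 2$, and let $\sigma_1\in\Gal(L/K)$ be dual to $\beta_1$. If $\Sel_2(E,\psi_{v_0})=\Hom(\Gal(L/K),\langle P\rangle)$, then evaluation at $\sigma_1$ is certainly nonzero on some $c$ (take the homomorphism sending $\sigma_1\mapsto P$ and the other $\sigma_i\mapsto 0$). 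Now the condition \eqref{choiceofq} of Lemma~\ref{idea} with $T=\varnothing$ says precisely $\Frob_\l|_L=\sigma_1$, so for such $\l$ one has $\res_\l(c)\neq 0$ by Definition~\ref{mrlemma}, and since $\Delta_E\in(K^\times)^2$ Proposition~\ref{idea2}(iii) gives infinitely many $\varphi$ with $r_2(E^\varphi)=r_2(E,\psi_{v_0})=n$, finishing the proof with no isogeny-Selmer detour. (The paper organizes this final case slightly differently, splitting on whether the evaluation map $t_1$ at $\sigma_1$ is zero or not: if $t_1\neq 0$ it uses Proposition~\ref{idea2}(iii) as above, and if $t_1=0$ a dimension count forces some $t_i$, $i\ge 2$, to be surjective onto $E[2]$, so Lemma~\ref{finallemma} applies; this avoids your dichotomy altogether. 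Your ``full image at some $\sigma$'' branch is fine as you wrote it.)
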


\begin{proof}
If $r_2(E) \equiv n$ (mod $2$), the result follows from Theorem \ref{upperbound}. From now on, we assume that $r_2(E) \not\equiv n$ (mod $2$). Since $E$ does not satisfy the constant $2$-Selmer parity, there exist $v_0 \in \Sigma$ and $\psi_{v_0} \in \Xset(K_{v_0})$ such that $r_2(E, \psi_{v_0}) \equiv n$ (mod $2$) by Theorem \ref{parity} (note that since $E[2] \subset E(K)$, all primes outside $\Sigma$ are in $\cP_2$, so twisting locally at primes not in $\Sigma$ does not change the parity by Lemma \ref{localzero} and Lemma \ref{ramhv}). If $r_2(E, \psi_{v_0}) \le n-2$ or $r_2(E, \psi_{v_0}) \ge n+2$, then the result follows from Proposition \ref{idea2}(ii), Theorem \ref{upperbound}, respectively. Let $r_2(E, \psi_{v_0}) = n$. If $\psi_{v_0}(\O_{K, \Sigma}^\times) = 1$, Proposition \ref{lastcase} shows the result. Now let $\beta_1, \cdots, \beta_n$ be a basis of $\O_{K,\Sigma}^\times/(\O_{K,\Sigma}^\times)^2$ such that $\psi_{v_0}(\beta_1) = -1$ and $\psi_{v_0}(\beta_2) = \psi_{v_0}(\beta_3) = \cdots = \psi_{v_0}(\beta_n) = 1$. 

Define $\sigma_1, \cdots, \sigma_n$ and $t_1, \cdots, t_n$ as in the proof of Theorem \ref{upperbound}. If $\dimtwo(\im(t_1)) \ge 1$, let $c \in \Sel_2(E, \psi_{v_0})$ and $c(\sigma_1) \neq 0$. Choose $\l$ ($T = \varnothing$) as in Lemma \ref{idea}, i.e., $\Frob_\l = \sigma_1$ in $L/K$ (see \eqref{choiceofq}). Then $c(\Frob_\l) = c(\sigma_1) \neq 0$, so Definition \ref{mrlemma} shows $\res_\l(c) \neq 0$. Then the result follows from Proposition \ref{idea2}(iii). Therefore for the rest of the proof, assume that $\dimtwo(\im(t_1)) = 0$. Then without loss of generality, we may assume $\dimtwo(\im(t_2)) = 2$. Choose $\omega \notin \Sigma$ so that $\Frob_{\omega} = \sigma_2$ in $\Gal(L/K)$. 
Then Definition \ref{mrlemma} shows that there exist $c_1, c_2 \in \Sel_2(E, \psi_{v_0})$ such that $\res_{\omega}(c_1)$ and $\res_{\omega}(c_2)$ generate $\Hom_\ur(G_{K_{\omega}}, E[2])$. Now Lemma \ref{finallemma} completes the proof. 
\end{proof}

\section*{Acknowledgements}
The author is very grateful to his advisor Karl Rubin for helpful suggestions and discussion. The author also thanks to the referee for careful reading the manuscript and many comments.

\bibliographystyle{plain}
\bibliography{secondmjrefe}

\end{document}